\date{}
\newtheorem{theorem}{Theorem}[section]
\newtheorem{proposition}[theorem]{Proposition}
\newtheorem{corollary}[theorem]{Corollary}
\newtheorem{definition}[theorem]{Definition}
\theoremstyle{remark}
\newtheorem{remark}[theorem]{Remark}
\newcommand{\Ker}{\operatorname{Ker}}
\newcommand{\cA}{{\mathcal A}}
\newcommand{\cD}{{\mathcal D}}
\newcommand{\cE}{{\mathcal E}}
\newcommand{\cI}{{\mathcal I}}
\newcommand{\cJ}{{\mathcal J}}
\newcommand{\cO}{{\mathcal O}}
\newcommand{\CC}{{\mathbb C}}
\newcommand{\DD}{{\mathbb D}}
\newcommand{\NN}{{\mathbb N}}
\newcommand{\PP}{{\mathbb P}}
\newcommand{\QQ}{{\mathbb Q}}
\newcommand{\ZZ}{{\mathbb Z}}
\renewcommand{\a}{\alpha}
\renewcommand{\b}{\beta}
\newcommand{\g}{\gamma}
\newcommand{\eps}{\epsilon}
\begin{document}
\title[The e\~ne product over a commutative ring]{The  e\~ne product over a commutative ring}

\subjclass[2000]{08A02, 13A99, 13F25, 30D20.} \keywords{E\~ne product, ring structure, formal power series, transalgebraic theory, entire functions, Hadamard-Weierstrass factorization, Hadamard product.}

\author[R. P\'{e}rez-Marco]{Ricardo P\'{e}rez-Marco}
\address{CNRS, IMJ-PRG UMR 7586, Universit\'e Paris Cit\'e \\ 
B\^at. Sophie Germain, 75205 Paris, France}

\email{ricardo.perez.marco@gmail.com}

\maketitle

{ \centerline{\sc Abstract}}

\bigskip

\begin{minipage}{15cm}
\noindent

We define the e\~ne product for the multiplicative group of polynomials and formal power series with coefficients on 
a commutative ring and unitary constant coefficient\footnote{Note (June 2026): The e\~ne product is a twisted form of the multiplication in the Big Witt ring. In 2019, when this article was first made public,  the author didn't know about the Big Witt ring. The presentation given here provides new formulas and a new analytic insights absent in the classical literature. It also provides a novel and natural construction
of the Big Witt ring (see \cite{BPMR}).}.
This defines a commutative ring structure where multiplication is
the additive structure and the e\~ne product is the multiplicative one. For polynomials over $\CC$, the e\~ne product 
acts as a multiplicative convolution of their divisor. We study its algebraic properties, its relation to symmetric 
functions on an infinite number of variables, to tensor products, and Hecke operators. The exponential
linearizes also the e\~ne product.  The e\~ne product extends to rational functions and formal meromorphic functions.
We also study the analytic properties over $\CC$, and for entire functions. The e\~ne product respects  
Hadamard-Weierstrass factorization and is related to the Hadamard product. 
The e\~ne product plays a central role in predicting the phenomenon of the ``statistics on Riemann zeros'' for Riemann 
zeta function and general Dirichlet $L$-functions discovered by the author in \cite{PM2}. It also gives reasons to believe in the Riemann 
Hypothesis as explained in \cite{PM3}.

\end{minipage}


\tableofcontents


\section{Preliminaries.} \label{sec:preliminaries}

All rings considered in this article are commutative unitary rings.
We consider a commutative ring $(A,+,.)$  with unit $1\in A$ and 
the associated local ring of formal power series $A[[X]]$ with 
coefficients in $A$. When the series are meant to be convergent power series
with complex coefficients we shall use the variable $z$ instead of $X$.
The definition of the
e\~ne product is valid for arbitrary rings. For some formulas involving
logarithms or exponentials we require the ring $A$ to contain $\QQ$,
i.e. $A$ is a $\QQ$-algebra, as we
need to divide by integers. On the other hand, the universal polynomial
formulas with integer coefficients remain valid for arbitrary rings.

\begin{proposition} Let $\cA=1+XA[[X]]$. The multiplication of formal power series
is an internal operation in $\cA$ and $(\cA , .)$ is an abelian 
group with $1$ as neutral element.
\end{proposition}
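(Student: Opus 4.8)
The plan is to observe that $\cA$ sits inside the commutative ring $A[[X]]$, so the bulk of the group axioms are inherited and only two points require genuine verification: that $\cA$ is closed under multiplication and that every element of $\cA$ admits a multiplicative inverse lying in $\cA$. Indeed, multiplication of formal power series is associative and commutative and has $1$ as its neutral element; since $1\in\cA$, associativity, commutativity, and the identity property restrict automatically to any multiplicatively closed subset containing $1$. Thus the proposition reduces to these two closure statements.

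For closure, I would write a general element of $\cA$ as $f=1+Xu$ with $u\in A[[X]]$, and similarly $h=1+Xv$. Then the direct computation $fh=1+X(u+v+Xuv)$ exhibits $fh$ as $1$ plus a series divisible by $X$, hence $fh\in\cA$. This is immediate and carries no obstruction.

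The substantive step is the existence of inverses, which I would handle by solving for the coefficients recursively. Writing $f=1+\sum_{n\geq1}a_nX^n$ and seeking $g=1+\sum_{n\geq1}b_nX^n$ with $fg=1$, comparison of the coefficient of $X^n$ (with the convention $a_0=b_0=1$) gives $\sum_{i=0}^{n}a_ib_{n-i}=0$ for each $n\geq1$, which rearranges to
\[
b_n=-\sum_{i=1}^{n}a_ib_{n-i}.
\]
Starting from $b_0=1$, this formula determines each $b_n$ uniquely in terms of $a_1,\dots,a_n$ and the previously constructed $b_0,\dots,b_{n-1}$, so the series $g$ exists, is unique, and lies in $\cA$ by construction.

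The only place where anything could go wrong is precisely this last step, and it is worth isolating why it succeeds: the coefficient multiplying the unknown $b_n$ in the $n$-th equation is $a_0=1$, a unit of $A$. This is exactly what allows the recursion to be solved over an arbitrary commutative ring without any division or invertibility hypothesis on $A$; it is the standard criterion that a formal power series over a commutative ring is invertible if and only if its constant term is a unit, specialized here to the constant term $1$. With closure and inverses established, all group axioms hold and $(\cA,\cdot)$ is an abelian group with neutral element $1$.
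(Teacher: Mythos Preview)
Your proof is correct and entirely standard. The paper itself states this proposition as a preliminary fact without proof, so there is nothing to compare against; your recursive construction of the inverse is the usual argument and is perfectly adequate here.
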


Sometimes we denote $\cA_A$ to indicate the coefficient ring $A$.
We recall some basic facts about the logarithmic derivative 
and the exponential.

\begin{definition}The logarithmic derivative $\cD : (\cA, .) \to (A[[X]], +)$
$$
f \mapsto \cD(f)=f'/ f
$$
is a morphism of groups.

For $f,g \in \cA$, $a \in A$, we have
\begin{align*}
\cD (f.g) &= \cD(f) +\cD(g)  \ ,\\
\cD(1) &= 0 \ ,\\
\cD (f(a X)) &=a \cD(f)(a X)  \ .
\end{align*}
If $f\in A[[X]]$ and if $f(0)\in A^\times$ is invertible then the logarithmic derivative $\cD (f)$ is defined
and takes values in $A[[X]]$ and $\cD (a f)=\cD(f)$.

\end{definition}

The morphism $\cD$ is an isomorphism when $\QQ\subset A$, $\Ker \cD =\{1\}$.
From now on in this section we assume that $\QQ\subset A$, i.e. $A$ is a $\QQ$-algebra.
\footnote{Alternatively, we can work in full generality
with the enveloping algebra of formal power series with variables labelled by $A$, $\QQ [[(X_a)_{a\in A}]]$ (see \cite{BPMR} and \cite{Bou} Chapter 4 for calculus with formal power series in an infinite number of variables). This ring is a $\QQ$-algebra and specializes to $A$ when
we identify the variables $X_a=a$ and quotient by the ideal of relations in $A$. Thus all universal polynomial formulas with integer coefficients remain valid when quotienting into $A$. Another way to avoid the restrictions of rings with
non-zero characteristic is to replace the formulas with exponentials
with logarithmic derivatives, but it is important to keep
formulas in exponential form.}

\begin{definition} The
exponential map $\exp :(XA[[X]] , +) \to (\cA , .)$
$$
f \mapsto \exp f=e^f=\sum_{n=0}^{+\infty} \frac{f^n}{n!}
$$
is an isomorphism of groups.
\end{definition}

Note that the inverse map is given by the logarithm morphism
$\log : (\cA , .) \to (XA[[X]] , +)$,
$$
\log(f) = \sum_{n=1}^{+\infty} \frac{(-1)^{n+1}}{n} (f-1)^n \ .
$$

The exponential map and the logarithmic derivative map do
factor the derivative operator. More precisely, the derivative operator
is the group isomorphism $D: (XA[[X]], +) \to (A[[X]],+)$
$$
f \mapsto D(f)=f'=\frac{d}{dX} f(X)
$$
and it factors as
$$
D=\cD \circ \exp \ .
$$

A related natural operator is the exponential logarithmic
derivative.

\begin{definition}
The exponential logarithmic derivative
is the group isomorphism $\cD_{\exp } : \, (\cA , .) \to (\cA, .)$
$$
f \mapsto \cD_{\exp } (f)=e^{X \cD (f)} \ .
$$
\end{definition}

\section{The e\~ne ring. Definition and first properties.} \label{sec:ene}

In this section we comsider an arbitrary commutative ring $A$.
We define the e\~ne product on $\cA$.

Let $(X_1, \dots, X_n)$ and $(Y_1, \dots, Y_m)$ be two sets of variables.

For $p \leq n, m$ we define
$$
\Sigma_p^{n\otimes m} =
\Sigma_{(i_1,j_1),\dots,(i_p,j_p)}(X_{i_1}Y_{j_1})\dots(X_{i_p}Y_{j_p})
\in \ZZ [X_1,\dots,X_n,Y_1,\dots,Y_m]
$$
where the sum runs over all elements of $(\{1,\ldots , n\}\times \{1,\ldots , m\})^p$.
We refer to \cite{PM1} or \cite{PM-MG-B-J}  for qualitative and quantitative generalizations of 
the following proposition using the theory of symmetric functions (see \cite{PM1} and \cite{Bou} for the generalization of the fundamental theorem of symmetric functions in an infinite
uncountable number of variables and to \cite{PM-MG-B-J} for explicit bounds).
Below we provide a direct proof (that will appear more natural after section 4).

\begin{proposition}\label{prop:sym}
For $p\leq \min\{n,m\}$, there exists a universal polynomial 
$Q_p \in \ZZ [X_1, \ldots ,X_p , Y_1, \ldots , Y_p]$ independent
of $n\geq p$ and $m\geq p$ such that
$$
\Sigma_p^{n\otimes m} =
Q_{p}(\Sigma_1^X,\dots,\Sigma_p^X,\Sigma_1^Y,\dots,\Sigma_p^Y)
$$
where the $\Sigma_k^X$ and $\Sigma_k^Y$ are the corresponding symmetric
functions in each set of variables.

We have 
$$
(-1)^p Q_p(X_1,\ldots ,X_p , Y_1, \ldots , Y_p)=- p X_p Y_p + P_p(X_1,
\ldots X_p,  Y_1, \ldots , Y_p)
$$
where $P_p$ does not contain any monomial $X_pY_p$, and 
the weight on the $X$'s and $Y$'s of each monomial of $P_p$ is $2p$. The 
weight of $X_{i_1}^{n_1}\ldots X_{i_p}^{n_p}Y_{j_1}^{m_1}\ldots Y_{j_p}^{m_p}$
being $n_1 i_1+\ldots + n_p i_p+ m_1 j_1 \ldots m_p j_p$.
 
\end{proposition}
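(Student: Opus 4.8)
The plan is to encode the whole family $\{\Sigma_p^{n\otimes m}\}_p$ in a single generating function and then pass to its logarithm, where the double product collapses into a product of power sums. Concretely, I would first observe that
\[
F(t):=\prod_{i=1}^{n}\prod_{j=1}^{m}\bigl(1+t\,X_iY_j\bigr)=\sum_{p\ge 0}\Sigma_p^{n\otimes m}\,t^p,
\]
since the coefficient of $t^p$ in the expansion is exactly the sum of the monomials $\prod_{k=1}^p X_{i_k}Y_{j_k}$ over all choices of $p$ pairwise distinct pairs $(i_k,j_k)$. This recasts the combinatorial definition of $\Sigma_p^{n\otimes m}$ into an object amenable to the exponential/logarithmic formalism of Section~\ref{sec:preliminaries}, which is precisely why the statement becomes more natural in the framework of Section~4.

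Next I would take $\log F$ and use $\log(1+u)=\sum_{k\ge1}\frac{(-1)^{k-1}}{k}u^k$ to obtain the key factorization
\[
\log F(t)=\sum_{k\ge1}\frac{(-1)^{k-1}}{k}\,p_k^X\,p_k^Y\,t^k,\qquad p_k^X:=\sum_i X_i^k,\quad p_k^Y:=\sum_j Y_j^k .
\]
The point is that the logarithm separates the two sets of variables and expresses everything through the power sums $p_k^X,p_k^Y$. By Newton's identities each $p_k^X$ is a universal integer polynomial in $\Sigma_1^X,\dots,\Sigma_k^X$ of weight $k$, with the crucial leading behavior $p_k^X=(-1)^{k-1}k\,\Sigma_k^X+(\text{weight-}k\text{ polynomial in }\Sigma_1^X,\dots,\Sigma_{k-1}^X)$. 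Since $\Sigma_p^{n\otimes m}$ has integer coefficients and is separately symmetric in the $X$'s and in the $Y$'s, the fundamental theorem of symmetric functions over $\ZZ$ already guarantees the existence of an integer polynomial $Q_p$; its independence of $n,m$ (for $p\le\min\{n,m\}$) follows because Newton's relations are universal and only $\Sigma_1,\dots,\Sigma_p$ of each type occur.

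To extract the two structural assertions I would exponentiate back: writing $F=\exp(\log F)=\prod_k\exp(c_kt^k)$ with $c_k=\frac{(-1)^{k-1}}{k}p_k^Xp_k^Y$, the coefficient of $t^p$ is a sum over partitions $\lambda\vdash p$ of products $\prod_k c_k^{m_k}/m_k!$. Every partition other than the single part $(p)$ uses only $c_k$ with $k<p$, hence involves only $\Sigma_1^X,\dots,\Sigma_{p-1}^X$ and likewise for $Y$; in particular the monomial $\Sigma_p^X\Sigma_p^Y$ can arise only from the one-part term $c_p=\frac{(-1)^{p-1}}{p}p_p^Xp_p^Y$. Multiplying the leading Newton terms $(-1)^{p-1}p\,\Sigma_p^X$ and $(-1)^{p-1}p\,\Sigma_p^Y$ by the prefactor $\frac{(-1)^{p-1}}{p}$ gives coefficient $(-1)^{p-1}p$ for $\Sigma_p^X\Sigma_p^Y$ in $Q_p$; equivalently the $X_pY_p$ coefficient of $(-1)^pQ_p$ is $-p$, which is exactly the asserted $-p\,X_pY_p$. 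Setting $P_p:=(-1)^pQ_p+p\,X_pY_p$ then removes this monomial by construction, so $P_p$ contains no $X_pY_p$ term. The weight claim is automatic: $\Sigma_p^{n\otimes m}$ is bihomogeneous of bidegree $(p,p)$ in the $X$'s and $Y$'s, while $\Sigma_k^X$ carries $X$-degree $k$, so every monomial of $Q_p$, hence of $P_p$, has $X$-weight $p$ and $Y$-weight $p$.

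The computation is essentially mechanical once the generating function is in place; the main obstacle is conceptual rather than technical, namely recognizing the log-factorization $\log F=\sum_k\frac{(-1)^{k-1}}{k}p_k^Xp_k^Yt^k$, which reduces an a priori complicated change of basis to the single well-understood Newton relation. The one point demanding genuine care is the bookkeeping that isolates $\Sigma_p^X\Sigma_p^Y$ to the partition $(p)$ and correctly tracks the sign $(-1)^{p-1}$ through both the Newton leading term and the exponential prefactor.
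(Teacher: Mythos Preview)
Your proposal is correct and follows essentially the same route as the paper's own proof: form the generating function $\prod_{i,j}(1\pm X_iY_j\,t)$, take its logarithm so that the double product collapses into a sum involving the products $p_k^X\,p_k^Y$ of power sums, rewrite those via Newton's identities in terms of the elementary symmetric functions, and then exponentiate back to read off $Q_p$ and isolate the leading $X_pY_p$ term. The only differences are cosmetic---you use the $(1+t\,X_iY_j)$ sign convention and cite Newton's identities and the fundamental theorem of symmetric functions explicitly, whereas the paper uses $(1-X_iZ)$ and packages the Newton relations into the auxiliary polynomials $K_k$---but the argument is the same.
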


\begin{proof} We carry out the computations in the ring of formal power series
$\QQ[[(X_i)_{1\leq i\leq n}, (Y_j)_{1\leq j\leq m}, Z]]$
Consider the polynomials 
\begin{align*}
f(Z) &= \prod_{i=1}^n (1-X_i Z) =1+\sum_{k=1}^{n } \Sigma_k^X \ Z^k \ ,\\
g(Z) &= \prod_{j=1}^m (1-Y_j Z) =1+\sum_{k=1}^{m } \Sigma_k^Y \ Z^k \ .
\end{align*}
Now in the same way
$$
\prod_{i,j} (1-X_iY_j Z)=1+\sum_{k=1}^{p } \Sigma_k^{n\otimes m} \ Z^k +
\cO (Z^{p+1}) \ .
$$
Observe that 
\begin{align*}
f(Z)&=\exp \left ( \log (1+\sum_{k=1}^{n } \Sigma_k^X \ Z^k )\right )\\
&=\exp \left ( \sum_{k=1}^{+\infty } K_k (\Sigma_1^X , \ldots , \Sigma_k^X) Z^k \right )
\end{align*}
where $K_k (U_1, \ldots , U_k)$ is a polynomial with rational coefficients of weight $k$ on the
$U$ variables and $K_k (U_1, \ldots , U_k)=U_k+L_k(U_1, \ldots , U_{k-1})$.
Also 
\begin{align*}
f(Z)&=\exp \left ( \sum_i \log (1-X_i Z) \right )\\
&=\exp \left ( -\sum_{k=1}^n  \frac{1}{k} \left (\sum_i X_i^k \right )
 Z^k \right )
\end{align*}

Observe now that 
$$
\left (\sum_i X_i^k \right ).\left (\sum_j Y_j^k \right )=
\sum_{i,j} (X_iY_j)^k
$$
thus
$$
\exp \left (-\sum_{k=1}^{+\infty } k K_k (\Sigma_1^X , \ldots , \Sigma_k^X)
K_k (\Sigma_1^Y , \ldots , \Sigma_k^Y)  Z^k\right )=
1+\sum_{k=1}^{p } \Sigma_k^{n\otimes m} Z^k +\cO (Z^{p+1}) \ .
$$
But also the expansion on power series on $Z$ gives
\begin{align*}
&\exp \left (-\sum_{k=1}^{+\infty } k K_k (\Sigma_1^X , \ldots , \Sigma_k^X)
K_k (\Sigma_1^Y , \ldots , \Sigma_k^Y)  Z^k\right )= \\
&=1+\sum_{k=1}^p (-1)^k Q_k(\Sigma_1^X , \ldots , \Sigma_k^X ,
\Sigma_1^Y , \ldots , \Sigma_k^Y ) Z^k +\cO (Z^{p+1})
\end{align*}
where $Q_k$ is the polynomial with rational coefficients and the required properties.
It only remains to check that $Q_k$ has indeed
integer coefficients and not just rational coefficients.
The polynomials $\Sigma_p^{k\otimes k} \in \ZZ[X_1,\ldots , X_k, Y_1, \ldots Y_k]$
are symmetric in the two group of variables $(X_1, \dots, X_k)$ and $(Y_1, \dots, Y_k)$. Using
the Fundamental Theorem of Symmetric functions in the ring $\ZZ[X_1, \dots, X_k]$ we have that
$\Sigma_p^{k\otimes k}$ is a polynomial with coefficients in $\ZZ[X_1, \dots, X_k]$  in the
variables $(\Sigma_1^Y , \ldots , \Sigma_k^Y )$. Each coefficient in $\ZZ[X_1, \dots, X_k]$ is a
symmetric polynomial in the variables $(X_1,\ldots , X_k)$.
Applying a second time the Fundamental Theorem of Symmetric functions  in the ring $\ZZ$
to each polynomial coefficient we get that $Q_k$ has integer coefficients.
\end{proof}

\begin{definition}
For any ring $A$, the e\~ne product of $f,g \in \cA$,
\begin{align*}
f(X)&=1+a_1 X+a_2 X^2+\ldots \cr 
g(X)&=1+b_1 X+b_2 X^2+\ldots
\end{align*}
is defined by
$$
f \star g (X)=1+c_1 X+c_2 X^2+\ldots 
$$
where for $n\geq 1$, $c_n$ is given  by
$$
c_n =(-1)^n Q_n(a_1, \ldots , a_n , b_1, \ldots , b_n) \ ,
$$
where $Q_n \in \ZZ [X_1,\ldots ,X_n, Y_1,\ldots , Y_n]$ are the polynomials
from  Proposition \ref{prop:sym}.
\end{definition}

The following is inmediate from the definition.

\begin{proposition}
The e\~ne product is an internal operation of $\cA$.
If $A\subset \CC$ and $(\a_i)$ and $(\beta_j)$ are the 
roots of two polynomials $f$ and $g$ then the roots of $f\star g$ 
are $(\alpha_i \beta_j)_{i,j}$.
\end{proposition}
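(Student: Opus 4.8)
The plan is to handle the two claims separately: the first is formal, and the second reduces to proving the single factorization identity $f\star g=\prod_{i,j}(1-\alpha_i\beta_jX)$, which I would obtain from the very exponential computation that defines the polynomials $Q_p$.

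For the first claim, the $Q_n$ of Proposition~\ref{prop:sym} have integer coefficients, so under the canonical ring homomorphism $\ZZ\to A$ each coefficient $c_n=(-1)^nQ_n(a_1,\dots,a_n,b_1,\dots,b_n)$ lies in $A$ whenever the $a_i,b_j$ do. The constant coefficient of $f\star g$ is $1$ by construction, hence $f\star g\in 1+XA[[X]]=\cA$ and $\star$ is internal.

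For the second claim I pass to $\CC$. Since $f(0)=g(0)=1$, with $\deg f=n$ and $\deg g=m$ I factor $f(X)=\prod_{i=1}^n(1-\alpha_iX)$ and $g(X)=\prod_{j=1}^m(1-\beta_jX)$; the pairwise products $\alpha_i\beta_j$ then parametrize the factors of $\prod_{i,j}(1-\alpha_i\beta_jX)$, and proving $f\star g$ equals this product is the same as saying the divisor of $f\star g$ is the multiplicative convolution of those of $f$ and $g$. With this factorization the coefficients $a_k,b_k$ are exactly the symmetric functions $\Sigma_k^X,\Sigma_k^Y$ of Proposition~\ref{prop:sym} evaluated at $\alpha,\beta$.

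Rather than matching elementary symmetric functions degree by degree, I would use the logarithmic form already present in the proof of Proposition~\ref{prop:sym}. Setting $p_k^\alpha=\sum_i\alpha_i^k$ and $p_k^\beta=\sum_j\beta_j^k$, and using $-kK_k=p_k$ (Newton's identities), that proof yields the defining generating function
$$
1+\sum_{p\ge 1}(-1)^pQ_p(a_1,\dots,a_p,b_1,\dots,b_p)\,X^p
=\exp\Big(-\sum_{k\ge 1}\frac{p_k^\alpha p_k^\beta}{k}\,X^k\Big).
$$
Since $p_k^\alpha p_k^\beta=\sum_{i,j}(\alpha_i\beta_j)^k$, the right-hand side is $\exp\big(\sum_{i,j}\log(1-\alpha_i\beta_jX)\big)=\prod_{i,j}(1-\alpha_i\beta_jX)$, while the left-hand side is $f\star g$ by definition. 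The two therefore coincide, giving the root statement. The one point that needs care—and the reason I prefer this route—is the range of $p$: a direct coefficient comparison would call on Proposition~\ref{prop:sym} for all $p\le nm=\deg(f\star g)$, whereas that proposition is stated only for $p\le\min\{n,m\}$. The exponential identity holds to all orders in $X$ for any finite $n,m$, so it certifies the matching over the full range $1\le p\le nm$, and the vanishing $c_p=0$ for $p>nm$, in one stroke.
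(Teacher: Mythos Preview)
Your argument is correct and is exactly what the paper means by ``immediate from the definition'': the polynomials $Q_p$ are manufactured in the proof of Proposition~\ref{prop:sym} precisely so that the exponential identity there yields $f\star g=\prod_{i,j}(1-\alpha_i\beta_j X)$ once $f,g$ are factored over $\CC$. Your observation about the range $p>\min\{n,m\}$ is a genuine clarification the paper leaves implicit---the exponential computation in that proof indeed holds to all orders in $Z$, which is what makes the definition of $c_p$ consistent for every $p\le nm$.
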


Note that the coefficient $c_n$ only depends on the 
coefficients of order $\leq n$. This operation, contrary to the 
sum and product, is not pointwise geometric. It is geometric 
in the roots. We give some explicit  formulas for the firsts coefficients.

\begin{proposition}
We have
\begin{align*}
c_1 &= -a_1 b_1 \ , \\
c_2 &= -2a_2 b_2 +a_2 b_1^2+a_1^2 b_2 \ , \\
c_3 &= -3a_3 b_3 +3a_3 b_1 b_2 -a_3 b_1^3 +3a_1 a_2 b_3 -a_1 a_2 b_1 b_2
-a_1^3 b_3 \ . 
\end{align*}
\end{proposition}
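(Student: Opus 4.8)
The plan is to read the three coefficients directly off the generating-function identity established in the proof of Proposition~\ref{prop:sym}. Substituting $a_k$ for $\Sigma_k^X$ and $b_k$ for $\Sigma_k^Y$ there, and recalling that $c_n=(-1)^nQ_n(a_1,\dots,a_n,b_1,\dots,b_n)$ while $c_n$ depends only on coefficients of order $\le n$, one obtains the compact formula
$$
1+\sum_{n\ge 1}c_n\,Z^n=\exp\!\left(-\sum_{k\ge 1}k\,K_k(a_1,\dots,a_k)\,K_k(b_1,\dots,b_k)\,Z^k\right),
$$
where $K_k$ is the weight-$k$ polynomial from that proof. Thus $c_n$ is simply the coefficient of $Z^n$ on the right, and the whole computation reduces to (i) computing $K_1,K_2,K_3$ and (ii) expanding the exponential to order three.

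For step (i) I would expand $\log(1+a_1Z+a_2Z^2+a_3Z^3+\cdots)=\sum_k K_k\,Z^k$ by means of $\log(1+u)=u-\tfrac12u^2+\tfrac13u^3-\cdots$, which gives $K_1=a_1$, $K_2=a_2-\tfrac12 a_1^2$ and $K_3=a_3-a_1a_2+\tfrac13 a_1^3$ (and identically in the $b$'s). Writing the exponent as $E=\sum_k e_k Z^k$ with $e_k=-k\,K_k(a)K_k(b)$, so that $e_1=-a_1b_1$, $e_2=-2K_2(a)K_2(b)$ and $e_3=-3K_3(a)K_3(b)$, step (ii) then amounts to extracting
$$
c_1=e_1,\qquad c_2=e_2+\tfrac12 e_1^2,\qquad c_3=e_3+e_1e_2+\tfrac16 e_1^3
$$
from $\exp(E)=1+E+\tfrac12E^2+\tfrac16E^3+\cO(Z^4)$. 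Substituting the $e_k$ and collecting monomials yields the three stated expressions.

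The only point requiring care, rather than real difficulty, is the cancellation of the rational coefficients produced by the denominators in $K_k$ and by the factorials in the exponential. For $c_2$ the term $\tfrac12 a_1^2 b_1^2$ coming from $e_2$ is cancelled exactly by $\tfrac12 e_1^2$. For $c_3$ the monomials $a_1a_2b_1^3$ and $a_1^3b_1b_2$ cancel between $e_3$ and $e_1e_2$, while the contributions $-\tfrac13$, $+\tfrac12$ and $-\tfrac16$ to $a_1^3b_1^3$ (from $e_3$, $e_1e_2$ and $\tfrac16 e_1^3$ respectively) sum to zero. These cancellations are forced in advance, since Proposition~\ref{prop:sym} guarantees that $Q_n$ has integer coefficients, so every fraction must disappear; the computation merely confirms it.

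As an independent check one may exploit that $c_n=(-1)^nQ_n(a,b)$ is a universal polynomial identity over $\ZZ$, so it suffices to verify it for $f,g$ with complex roots of sufficiently high degree, where the coefficients $a_i,b_j$ assume generic values. There the preceding proposition identifies the coefficients of $f\star g$ with the signed elementary symmetric functions of the products $\alpha_i\beta_j$, and comparing these in low degree reproduces the same $c_1,c_2,c_3$.
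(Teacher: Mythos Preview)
Your argument is correct. The paper states this proposition without proof, so there is nothing to compare against; your method---reading off the coefficients from the exponential identity in the proof of Proposition~\ref{prop:sym} via the expansion of $\log(1+u)$ and then of $\exp(E)$---is exactly the computation that that proof invites, and your bookkeeping of the cancellations is accurate.
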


Now the main property follows.

\begin{theorem} \textbf{(Distributivity of the e\~ne product)} \label{thm:distributivity}
The e\~ne product $\star$
is distributive with 
respect to the multiplication. If 
$f,g,h \in \cA$ then
$$
(f.g)\star h =(f\star h).(g \star h) \ .
$$
\end{theorem}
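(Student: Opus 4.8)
The plan is to linearize the e\~ne-product in ``logarithmic coordinates'' and thereby reduce distributivity to the ordinary distributivity of $\times$ over $+$ in the coefficient ring, first assuming $\QQ\subset A$ and then descending to an arbitrary commutative ring by a universality argument. Assume first $\QQ\subset A$. For $f\in\cA$ write
$$
X\cD(f)=\frac{Xf'}{f}=-\sum_{k\geq 1}p_k^{(f)}\,X^k\ ,
$$
which defines the formal power sums $p_k^{(f)}\in A$; equivalently $\log f=-\sum_{k\geq 1}\frac{1}{k}p_k^{(f)}X^k$. Since $\cD:(\cA,.)\to(A[[X]],+)$ is a group isomorphism and we may integrate over a $\QQ$-algebra, the assignment $f\mapsto(p_k^{(f)})_{k\geq 1}$ is a bijection of $\cA$ onto sequences in $A$, and I claim it converts both operations into elementwise arithmetic.

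The first formula is immediate: from $\cD(f.g)=\cD(f)+\cD(g)$ one gets $p_k^{(f.g)}=p_k^{(f)}+p_k^{(g)}$, so ordinary multiplication becomes addition of power sums. The second formula is exactly what the proof of Proposition \ref{prop:sym} computes: writing $\log f=\sum_k K_k(\Sigma_1^X,\dots,\Sigma_k^X)Z^k$, so that $K_k=-\frac1k p_k^{(f)}$, the identity $\left(\sum_i X_i^k\right)\left(\sum_j Y_j^k\right)=\sum_{i,j}(X_iY_j)^k$ shows that the series whose coefficients are the $\Sigma_k^{n\otimes m}$ — that is, $f\star g$ — has power sums that multiply, $p_k^{(f\star g)}=p_k^{(f)}\cdot p_k^{(g)}$. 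Hence in the coordinates $(p_k)$ the e\~ne-product is elementwise multiplication, and distributivity is a one-line check: for each $k$,
$$
p_k^{((f.g)\star h)}=\big(p_k^{(f)}+p_k^{(g)}\big)\,p_k^{(h)}=p_k^{(f)}p_k^{(h)}+p_k^{(g)}p_k^{(h)}=p_k^{(f\star h)}+p_k^{(g\star h)}=p_k^{((f\star h).(g\star h))}\ .
$$
Since $f\mapsto(p_k^{(f)})_k$ is injective, this yields $(f.g)\star h=(f\star h).(g\star h)$ whenever $\QQ\subset A$.

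It remains to remove the hypothesis $\QQ\subset A$. For each fixed $n$ the coefficient of $X^n$ on either side of the asserted identity is, by the definition of $\star$ through the integer polynomials $Q_n$ and by the integrality of the structure constants of power-series multiplication, a single universal polynomial in $\ZZ[a_1,\dots,a_n,b_1,\dots,b_n,d_1,\dots,d_n]$, where $a_i,b_i,d_i$ are the coefficients of $f,g,h$. The computation above, applied with $A=\QQ[a_i,b_i,d_i]$ and the coefficients taken as indeterminates, shows that these two integer polynomials agree after the inclusion $\ZZ\hookrightarrow\QQ$, hence they are equal already in $\ZZ[a_i,b_i,d_i]$. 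Being an identity of polynomials with integer coefficients, it specializes to any commutative ring $A$, which gives the theorem in full generality.

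The only delicate point is this descent: one must be certain that each side is genuinely given by one universal $\ZZ$-polynomial in the coefficients, so that agreement over $\QQ$ forces agreement over $\ZZ$ and hence over every $A$; this is precisely where the integrality asserted in Proposition \ref{prop:sym} is essential. A purely geometric alternative avoiding logarithms would be to verify the identity over $\CC$ only for $f,g,h$ split as products $\prod(1-\alpha_iX)$, where the root description gives the same multiset $\{\alpha_i\gamma_k\}\cup\{\beta_j\gamma_k\}$ on both sides, and then to invoke Zariski density of split polynomials together with the same integrality; the power-sum argument is cleaner, and is the one I would write out.
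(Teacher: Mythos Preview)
Your proof is correct, and it takes a genuinely different route from the paper's.

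The paper argues geometrically: since the $n$-th coefficient on each side is a universal integer polynomial in the coefficients of $f,g,h$, it suffices to check the identity for polynomials over $\CC$; there one reads off that $(f.g)\star h$ and $(f\star h).(g\star h)$ have the same multiset of roots $(\a_i\g_k)\cup(\b_j\g_k)$, hence coincide. You instead observe that the very construction of the $Q_n$ in the proof of Proposition~\ref{prop:sym} already encodes the exponential form: writing $\log f=\sum_k K_k(a_\bullet)Z^k$, that proof \emph{defines} $Q_k$ by expanding $\exp\big(-\sum_k kK_k(a_\bullet)K_k(b_\bullet)Z^k\big)$, so over any $\QQ$-algebra one has $p_k^{(f\star g)}=p_k^{(f)}p_k^{(g)}$ directly from the definition, and distributivity is the ring distributivity of $A$ coordinatewise. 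Your descent to arbitrary $A$ via the integrality of the $Q_n$ is the same universality step the paper uses, just run in the opposite direction (from $\QQ$ down to $\ZZ$ rather than from $\ZZ$ up to $\CC$).

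What each approach buys: the paper's argument is more pictorial and makes the divisor-convolution meaning of $\star$ do the work, but it requires passing to $\CC$ and invoking an open-set/Zariski-density step. Your argument is purely algebraic, never leaves formal power series, and in effect proves the Exponential Form theorem \emph{before} distributivity rather than after --- reversing the paper's logical order, which is legitimate because your derivation of $p_k^{(f\star g)}=p_k^{(f)}p_k^{(g)}$ uses only the construction of the $Q_n$, not any of the later ring axioms. The ``geometric alternative'' you sketch at the end is precisely the paper's proof.
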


\begin{proof}
The $n$-th order coefficient of $(f.g)\star h $ (resp. 
$(f\star h).(g \star h)$) is a polynomial with 
integer coefficients on the coefficients of order $\leq n$ of 
$f$, $g$ and $h$. Thus, by universality, it is enough to establish the identity 
when $A=\CC$ and when $f$, $g$ and $h$ are polynomials. 
Because in such case the polynomials
with integer coefficients giving the expressions of order $n$ on 
both sides will agree on an open set of $\CC^{n^3}$ thus are 
equal (we must choose $f$, $g$ and $h$ of degree larger than $n$).

 If $(\a_i)$, $(\b_j)$ and $(\g_k)$ are respectively the zeros 
of $f$, $g$ and $h$ counted with multiplicity  
then the zeros counted with multiplicity of $(f.g)\star h $
and $(f\star h).(g \star h)$ are $(\a_i\g_k)_{i,k}
\cup (\b_j \g_l)_{j,l}$. 
Thus these two polynomial functions have the same zeros, and constant value $1$,
so they must be equal, and the result follows. 
\end{proof}

\begin{theorem}The set $(\cA , ., \star )$ is 
a commutative ring with zero $1\in \cA$ and unity 
$1-X \in \cA$.
More precisely, we have 
\begin{itemize}
\item  $(\cA , .)$ is an abelian group.

\item  (Distributivity) For $f,g,h \in \cA$, 
$(f.g)\star h =(f\star h).(g \star h)$.

\item  (Associativity)  For $f,g,h \in \cA$, 
$(f \star g) \star  h = f \star (
g \star  h)$ \ .

\item  (Commutativity) For $f,g \in \cA$,
$f \star g=g \star f$.

\item  (Unit) For $f \in \cA$,
$f \star (1-X)=(1-X)\star f=f$.
\end{itemize}
\end{theorem}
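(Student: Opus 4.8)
The plan is to verify each of the five listed ring axioms in turn, leaning on the geometric description of the e\~ne product (the roots of $f \star g$ are the pairwise products of the roots of $f$ and $g$) together with the universality principle that has already proven so effective in Theorem~\ref{thm:distributivity}. The first bullet, that $(\cA,.)$ is an abelian group, is simply a restatement of the opening proposition, so nothing new is required there. Distributivity is exactly Theorem~\ref{thm:distributivity}, already established. This leaves associativity, commutativity, and the identification of the unit, and all three I would attack by the same two-step method: first reduce to the case $A = \CC$ with $f,g,h$ polynomials of sufficiently high degree, invoking universality (the coefficients on each side are fixed integer polynomials in the coefficients of the inputs, so an identity holding on an open subset of a complex affine space forces a polynomial identity over $\ZZ$, hence over any commutative ring $A$); then settle the complex polynomial case by comparing the multisets of roots.

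For commutativity, once reduced to the complex polynomial case, the roots of $f \star g$ are $(\a_i \b_j)_{i,j}$ and the roots of $g \star f$ are $(\b_j \a_i)_{j,i}$; since multiplication in $\CC$ is commutative these multisets coincide, and two monic-normalized elements of $\cA$ with the same roots (to the relevant order, with degrees chosen $>n$) agree, giving $f \star g = g \star f$. Associativity runs identically: the roots of $(f\star g)\star h$ are $((\a_i\b_j)\g_k)$ while those of $f\star(g\star h)$ are $(\a_i(\b_j\g_k))$, and associativity of multiplication in $\CC$ makes these the same multiset, so the two power series agree. In each case I must remember that the e\~ne product depends only on the roots as a multiset counted with multiplicity, which is precisely what Proposition~\ref{prop:sym} guarantees through the symmetric-function expression for the coefficients.

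For the unit axiom, the element $1 - X$ has a single root at $z = 1$, so the roots of $f \star (1-X)$ are $(\a_i \cdot 1)_i = (\a_i)_i$, which are exactly the roots of $f$; hence $f \star (1-X) = f$, and combined with commutativity this gives the symmetric statement $f \star (1-X) = (1-X) \star f = f$. The ``zero'' is $1 \in \cA$: since $1$ has no roots, $f \star 1$ has no roots and equals the empty-product normalization $1$, so $1$ is absorbing for $\star$ and neutral for the group law $.$, confirming its role as the additive identity of the ring.

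The main obstacle is not any single computation but the careful handling of the reduction step, and I expect the genuine subtlety to lie in associativity. For the universality argument to close, I need the degrees of $f,g,h$ to be large enough that the order-$n$ coefficient of each side is a faithful polynomial function of the root data on a Zariski-open set; with a threefold product of root multisets one must check that choosing all three degrees $> n$ suffices so that no degeneration collapses distinct monomials and the open set on which the two sides agree is genuinely nonempty. Once that bookkeeping is in place, the root-multiset comparison is immediate from associativity and commutativity of multiplication in $\CC$, and the whole theorem assembles from the pieces with no further difficulty.
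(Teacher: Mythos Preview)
Your proposal is correct and follows essentially the same approach as the paper: the paper's own proof simply says that the remaining axioms ``are proved in the same way'' as distributivity, meaning exactly the universality-plus-root-comparison argument you spell out. Your treatment is more detailed (in particular your remarks on choosing the degrees large enough for the Zariski-open-set argument to bite), but the strategy is identical.
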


\begin{proof}
We already  proved the distributive property. The other properties
follow in the same way.
\end{proof}
We have in the e\~ne ring $(\cA , ., \star )$ the usual identities
in commutative rings:

\begin{corollary}
For $f,g \in \cA$ and $n\geq 1$  we have

\begin{itemize}
\item  $f\star 1=1\star f=1$.

\item  $f\star (1/g)=(1/f)\star g =
\frac{1}{f\star g}$.

\item  For $n\in \ZZ$, $f\star g^n=f^n\star g =
(f\star g)^n$.

\item  ${1\over f} \star {1\over g}=f\star g$.

\item  Newton binomial formula.
$$
\left (f . g \right )^{\star n}=\prod_{k=0}^n 
\left ( f^{\star (n-k)}\star g^{\star k} 
\right )^{\binom{n} {k}} \ .
$$
\end{itemize}

\end{corollary}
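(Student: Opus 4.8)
The plan is to deduce every item from the single fact, established in the preceding theorem, that $(\cA,.,\star)$ is a commutative ring whose ``zero'' is $1$ and whose ``unit'' is $1-X$. Once this dictionary is fixed --- the group law $.$ plays the role of addition, $\star$ that of multiplication, the neutral element $1$ is the zero, and the $n$-fold power $g^n$ under $.$ plays the role of $n\cdot g$ --- each assertion is simply the multiplicative translation of a standard ring identity, so the whole corollary is formal.

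First I would prove that the zero $1$ is absorbing, i.e.\ $f\star 1=1\star f=1$. Writing $1=1.1$ and applying distributivity gives $1\star f=(1.1)\star f=(1\star f).(1\star f)$, so $u:=1\star f$ satisfies $u=u.u$ in the group $(\cA,.)$; cancelling $u$ forces $u=1$, and commutativity of $\star$ gives the symmetric equality. Next come the sign rules (the translations of $a\cdot(-b)=-(a\cdot b)$ and $(-a)\cdot(-b)=a\cdot b$). For $f\star(1/g)=1/(f\star g)$ I would use $1=f\star 1=f\star\bigl(g.(1/g)\bigr)=(f\star g).\bigl(f\star(1/g)\bigr)$, which exhibits $f\star(1/g)$ as the $.$-inverse of $f\star g$; the symmetric computation and commutativity give $(1/f)\star g=1/(f\star g)$ as well. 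Applying this twice then yields $(1/f)\star(1/g)=1/\bigl((1/f)\star g\bigr)=1/\bigl(1/(f\star g)\bigr)=f\star g$.

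The power rule $f\star g^n=f^n\star g=(f\star g)^n$ follows by induction on $n$: distributivity gives $f\star g^n=f\star(g^{n-1}.g)=(f\star g^{n-1}).(f\star g)$, and the inductive hypothesis collapses the right-hand side to $(f\star g)^{n-1}.(f\star g)=(f\star g)^n$; commutativity of $\star$ handles $f^n\star g$. Finally, the Newton binomial formula is nothing but the binomial theorem inside the commutative ring $(\cA,.,\star)$ rewritten multiplicatively: the additive sum $\sum_{k=0}^n\binom{n}{k}a^{n-k}b^k$ becomes the $.$-product $\prod_{k=0}^n\left(f^{\star(n-k)}\star g^{\star k}\right)^{\binom{n}{k}}$, since $.$ is the addition, $\star$ the multiplication, and the integer coefficient $\binom{n}{k}$ acts as a $\binom{n}{k}$-fold $.$-power. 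I would prove it by induction on $n$ using distributivity together with the power rule just established, exactly as one proves the ordinary binomial theorem over a commutative ring.

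I expect no genuine obstacle: every step reduces to a ring axiom already in hand. The only thing demanding care is maintaining the additive/multiplicative dictionary consistently throughout --- in particular remembering that the ``zero'' is $1$, that the group inverse $1/\cdot$ plays the role of negation, that exponents translate into $.$-powers, and that the summation sign of the classical binomial identity turns into the product $\prod$.
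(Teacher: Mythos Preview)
Your proposal is correct and matches the paper's approach exactly: the paper gives no proof at all, simply prefacing the corollary with ``So we have the usual properties of ring operation,'' which is precisely your strategy of reading each item as a standard commutative-ring identity under the dictionary $(+,\cdot,0,-)\leftrightarrow(.,\star,1,1/\cdot)$. Your write-up is in fact more detailed than what the paper provides.
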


We have also some additional properties that are proved 
as in Theorem \ref{thm:distributivity}.

\begin{theorem}
We have 
\begin{itemize}
\item {$(1)$} If $f,g \in \cA$ and $a \in A$
we have
$$
f(a X) \star g(X) =f(X) \star g(a X) =(f\star g) (aX) \ .
$$
In particular,
$$
(1-a X)\star f(X)=f(a X) \ .
$$

\item {$(2)$} For $f, g\in \cA$ and $k\geq 1$ positive integer,
$$
f(X^k)\star g(X^k)=\left ((f\star g)(X^k) \right )^k \ .
$$

\item {$(3)$} For $f, g\in \cA$ and $k, l\geq 1$ positive integers
with $k\wedge l=1$,
$$
f(X^k)\star g(X^l)=(f\star g)(X^{kl}) \ .
$$
\end{itemize}

\end{theorem}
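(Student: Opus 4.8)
The plan is to reduce all three identities to the universality principle already exploited in Theorem~\ref{thm:distributivity}: each coefficient $c_n$ of either side is a fixed polynomial with integer coefficients in the coefficients of $f$ and $g$ of order $\leq n$, so it suffices to verify the identity when $A=\CC$ and $f,g$ are polynomials of large degree, and there it suffices to check that both sides have the same multiset of roots. Thus throughout I would write $(\a_i)$ for the roots of $f$ and $(\b_j)$ for the roots of $g$, and translate each operation into its effect on roots, using the basic principle (from the earlier Proposition) that the roots of $f\star g$ are the products $(\a_i\b_j)$.

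For part $(1)$, I would first record that replacing the variable $X$ by $aX$ in a member of $\cA$ has a transparent effect on roots: if $f(X)=\prod_i(1-\a_i X)$ then $f(X^{\phantom{}})$ evaluated at $aX$ equals $\prod_i(1-(a^{-1}\a_i)(aX))$ when $a$ is invertible, so the roots of $f(aX)$ are $(a^{-1}\a_i)$. Hence the roots of $f(aX)\star g(X)$ are $(a^{-1}\a_i\b_j)$, which are exactly the roots of $(f\star g)(aX)$; by symmetry the same holds with the dilation on $g$ instead. To handle non-invertible $a$ cleanly one can again invoke universality (verify the polynomial identity in the coefficients for $A=\CC$ and $a\neq 0$, an open dense condition, hence for all $a$). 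The special case then follows by taking $g=1-X$, whose unique root is $1$, and noting that $f\star(1-X)=f$ since $1-X$ is the $\star$-unit. The dilation statement is essentially bookkeeping on roots and I expect no real obstacle here.

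For parts $(2)$ and $(3)$ the substitution $X\mapsto X^k$ is the subtle point, because $f(X^k)$ is a polynomial whose roots are the $k$-th roots $\z\a_i^{1/k}$ as $\z$ ranges over the $k$-th roots of unity; so passing to roots introduces $k$-fold multiplicities and all $k$-th roots of unity. Concretely, if the roots of $f$ are $(\a_i)$ then the roots of $f(X^k)$ are $\{\,\z\mu_i : \z^k=1,\ \mu_i^k=\a_i\,\}$, and similarly for $g(X^l)$ with $l$-th roots of unity $\eta$ and $\nu_j^l=\b_j$. For part $(2)$, with $k=l$, the roots of the $\star$-product $f(X^k)\star g(X^k)$ are the products $\z\eta\,\mu_i\nu_j$ ranging over all $(\z,\eta)$ with $\z^k=\eta^k=1$; I would group these by the value of the product $\z\eta$, observe that each value in the group of $k$-th roots of unity is hit exactly $k$ times, and check that $\mu_i\nu_j$ runs exactly over the $k$-th roots of $\a_i\b_j$, i.e.\ over the roots of $(f\star g)(X^k)$. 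The factor-of-$k$ overcounting from the $\z\eta$ grouping is precisely what produces the outer $k$-th power on the right-hand side.

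The main obstacle is the coprimality bookkeeping in part $(3)$: with $k\wedge l=1$, I would show that the map $(\z,\eta)\mapsto \z\eta$ from (k-th roots of unity)$\times$(l-th roots of unity) to the $kl$-th roots of unity is a bijection, which is exactly the Chinese-remainder statement $\mu_k\x\mu_l\iso\mu_{kl}$ available because $\gcd(k,l)=1$. Granting this bijection, the roots of $f(X^k)\star g(X^l)$ are the products $\z\eta\,\mu_i\nu_j$, and as $(\z,\eta)$ runs over all pairs these $\z\eta$ hit every $kl$-th root of unity exactly once, while $\mu_i\nu_j$ must be recognized as a single $kl$-th root of $\a_i\b_j$; matching multiplicities then identifies the multiset with the roots of $(f\star g)(X^{kl})$, giving the clean identity with no outer power. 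The delicate step is confirming that $\mu_i\nu_j$ ranges over all $kl$-th roots of $\a_i\b_j$ with the correct multiplicity, again a consequence of $k\wedge l=1$; after that the root multisets agree and universality upgrades the equality of root-sets to equality of polynomials, and thence to the formal identity over an arbitrary commutative ring $A$.
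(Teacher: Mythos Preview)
Your proposal is correct and follows essentially the same approach as the paper: reduce via universality to polynomials over $\CC$ and compare multisets of roots, using that the multiplication map $\mathbb U_k\times\mathbb U_k\to\mathbb U_k$ is $k$-to-$1$ for part~(2) and that $\mathbb U_k\times\mathbb U_l\to\mathbb U_{kl}$ is a bijection when $k\wedge l=1$ for part~(3). The paper's proof is just a terser version of what you wrote; your derivation of the special case $(1-aX)\star f(X)=f(aX)$ from the general dilation identity and the fact that $1-X$ is the $\star$-unit is exactly right.
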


\begin{proof} The proof of (1) is clear.
For the proof of (2), we consider polynomials $f(z)$ and $g(z)$
with complex coefficients. Observe  that if the roots of $f$
(resp. $g$) are the $(\a_i)$ (resp. $(\b_j)$), then the 
roots of $f(z^k)$ (resp. $g(z^k)$, $f\star g (z^k)$,
$f(z^k)\star g(z^k)$)
are the $(\eps \a_i^{1/k})$
(resp. $(\eps \b_j^{1/k})$, $(\eps \a_i^{1/k} \b_j^{1/k})$,
$(\eps \eps' \a_i^{1/k} \b_j^{1/k})$ ) 
where $\eps$ (and $\eps'$) runs over the
group $\mathbb U_k$ of $k$-roots of $1$. Now, the map
$\mathbb U_k^2 \to \mathbb  U_k$, $(\eps, \eps')\mapsto \eps \eps'$
is $k$-to-$1$ and the result follows by universality of the formulas.

The proof of (3) is similar observing that the map
$\mathbb U_k \times \mathbb U_l \to \mathbb U_{kl}$, $(\eps, \eps')
\mapsto \eps \eps'$, is a bijection when $k\wedge l=1$.
\end{proof}

\begin{theorem}We assume that $A\subset\CC$.
If $f,g\in \cA$ are polynomials or entire functions of order $<1$
with respective zeros $(\a_i)$ and $(\b_j)$, we have
$$
f\star g (z)=\prod_{i,j} \left (1-{z\over \a_i \b_j} \right )
=\prod_j f \left ( {z\over \b_j } \right ) = 
\prod_i  g \left ( {z\over \a_i }\right ) \ .
$$
\end{theorem}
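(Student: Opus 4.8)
The plan is to reduce the statement to the polynomial case, which is already contained in Proposition~\ref{prop:sym} and the subsequent description of the zeros of $f\star g$, and then to pass to the limit for entire functions of order $<1$, exploiting the fact that the coefficients of the e\~ne product depend \emph{polynomially}, hence continuously, on finitely many coefficients of the factors.

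First I would extract the factorizations forced by the hypotheses. If $f$ is entire of order $\rho<1$, the exponent of convergence of its zeros is $\le\rho<1$, so $\sum_i|\alpha_i|^{-1}<\infty$; Hadamard's theorem then gives genus $0$, a constant exponential factor, and no factor $z^m$ (because $f(0)=1$), whence $f(z)=\prod_i(1-z/\alpha_i)$ with the product converging locally uniformly. The same holds for $g$, and for polynomials the factorization is just the corresponding finite product. Since $\sum_i|\alpha_i|^{-1}<\infty$ and $\sum_j|\beta_j|^{-1}<\infty$, I get $\sum_{i,j}|\alpha_i\beta_j|^{-1}<\infty$, so $h(z)=\prod_{i,j}(1-z/(\alpha_i\beta_j))$ converges absolutely and unconditionally, defining an entire function of genus $0$. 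Granting the first equality $f\star g=h$, the last two equalities are then formal: regrouping the absolutely convergent product as $\prod_j\bigl(\prod_i(1-(z/\beta_j)/\alpha_i)\bigr)$ gives $\prod_j f(z/\beta_j)$, and symmetrically $\prod_i g(z/\alpha_i)$.

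It remains to prove $f\star g=h$. For polynomials this is exactly the Proposition asserting that the zeros of $f\star g$ are the products $\alpha_i\beta_j$: together with $(f\star g)(0)=1$ and the count of zeros this yields $f\star g(z)=\prod_{i,j}(1-z/(\alpha_i\beta_j))$. For the entire case I would approximate by the partial products $f_N(z)=\prod_{i\le N}(1-z/\alpha_i)$ and $g_N(z)=\prod_{j\le N}(1-z/\beta_j)$, which are polynomials in $\cA$, so that the polynomial case gives $f_N\star g_N(z)=\prod_{i,j\le N}(1-z/(\alpha_i\beta_j))$. Now I let $N\to\infty$ and compare the two natural limits of the $n$-th Taylor coefficient. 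On the one hand the diagonal partial products form an exhaustion of the index set, so $f_N\star g_N\to h$ locally uniformly, whence $[z^n](f_N\star g_N)\to[z^n]h$. On the other hand, by the Definition of the e\~ne product, $[z^n](f_N\star g_N)=(-1)^nQ_n$ evaluated at the first $n$ coefficients of $f_N$ and $g_N$; since $f_N\to f$ and $g_N\to g$ locally uniformly these coefficients converge to those of $f$ and $g$, and as $Q_n$ is a polynomial the value converges to $(-1)^nQ_n(a_1,\dots,a_n,b_1,\dots,b_n)=c_n=[z^n](f\star g)$. Uniqueness of the limit gives $[z^n](f\star g)=[z^n]h$ for every $n$, hence $f\star g=h$.

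The main obstacle is the analytic bookkeeping rather than any algebra: one must be certain that the order hypothesis really delivers a genus-zero factorization with no exponential factor and with $\sum_i|\alpha_i|^{-1}<\infty$, and that the double product converges well enough for the diagonal truncations $\prod_{i,j\le N}$ to converge to $h$ locally uniformly, so that Taylor coefficients pass to the limit. Once these are in place, the algebraic input — that $c_n$ is a fixed integer polynomial $Q_n$ in only finitely many coefficients of the factors — makes the coefficientwise limit automatic, and the two computations of $\lim_N[z^n](f_N\star g_N)$ are forced to agree.
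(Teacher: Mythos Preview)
Your argument is correct. In fact the paper states this theorem without proof, treating it as an immediate consequence of the polynomial case (Proposition~\ref{prop:sym} and the remark that the roots of $f\star g$ are the $\alpha_i\beta_j$) together with the obvious extension to entire functions of order $<1$ via their genus-zero Hadamard factorization; your write-up supplies exactly the limiting argument the paper leaves implicit, namely that the coefficient $c_n$ is a fixed polynomial $Q_n$ in finitely many Taylor coefficients and therefore passes to the limit under the partial products $f_N,g_N$.
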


This last result extends to arbitrary entire functions for each product that is converging.

\section{Main formula and first applications.} \label{sec:main}

We assume $\QQ\subset A$ in this section.
The following fundamental relation relates the exponential, the
logarithmic derivative and the e\~ne product.

\begin{theorem}\textbf{(Main Formula).}
 For $f,g \in \cA$ we have
$$
\exp \left ( X\cD(f\star g) \right ) =g\star \exp (X\cD(f))
=f\star \exp (X\cD(g)) \ .
$$

Or, in terms of the exponential logarithmic derivative,

$$
\cD_{\exp } (f\star g) =f\star \cD_{\exp } (g)=
\cD_{\exp } (f)\star g \ .
$$
\end{theorem}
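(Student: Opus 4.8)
The plan is to reduce everything to the polynomial case over $\CC$ and then use the geometric characterization of the e\~ne product in terms of roots. By the same universality argument used in Theorem \ref{thm:distributivity}, the coefficient of each power of $X$ on every side of the Main Formula is a polynomial with integer coefficients in the coefficients of $f$ and $g$ of bounded order; hence it suffices to verify the identity when $A=\CC$ and $f,g$ are polynomials of large degree, since agreement on an open set of $\CC^{N}$ forces equality of the universal polynomials.

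So let $f$ and $g$ be polynomials with roots $(\a_i)$ and $(\b_j)$ respectively, so that $f=\prod_i(1-X/\a_i)$ and $g=\prod_j(1-X/\b_j)$ up to the unitary normalization. The key observation is to compute $\exp(X\cD(f))$ explicitly. Using the properties of $\cD$ from the Definition, $\cD(f)=\sum_i \cD(1-X/\a_i)$, and a direct computation gives $X\cD(1-X/\a_i)=\frac{-X/\a_i}{1-X/\a_i}=-\sum_{k\ge1}(X/\a_i)^k$. Therefore
\begin{align*}
\exp\left(X\cD(f)\right)&=\prod_i \exp\left(-\sum_{k\ge1}(X/\a_i)^k\right)\\
&=\prod_i (1-X/\a_i)\cdot u_i(X)
\end{align*}
where one recognizes that $\exp(X\cD(1-aX))$ is itself an element of $\cA$ whose roots I would identify. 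The cleanest route is to observe directly that $\cD_{\exp}$ is multiplicative (from $\cD(f.g)=\cD(f)+\cD(g)$ and the exponential being a group isomorphism), so $\cD_{\exp}(f)=\prod_i \cD_{\exp}(1-X/\a_i)$, reducing the whole problem to understanding $\cD_{\exp}$ on a single linear factor $1-aX$.

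Having reduced to linear factors, I would compute $\cD_{\exp}(1-aX)=\exp(X\cD(1-aX))=\exp(-\sum_{k\ge1}a^kX^k)$ and determine its zero structure (its single ``root'' data). Then, using both the multiplicativity of $\cD_{\exp}$ and the distributivity of $\star$ over multiplication (Theorem \ref{thm:distributivity}), I would expand
$$
f\star \cD_{\exp}(g)=\left(\prod_i(1-X/\a_i)\right)\star\left(\prod_j \cD_{\exp}(1-X/\b_j)\right)
$$
into a product over all pairs $(i,j)$ of terms $(1-X/\a_i)\star \cD_{\exp}(1-X/\b_j)$, and compare with the analogous expansion of $\cD_{\exp}(f\star g)$, whose roots are the $(\a_i\b_j)$ by the root-characterization of $\star$. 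The heart of the proof is thus a single identity on linear factors: that $(1-aX)\star\cD_{\exp}(1-bX)$ equals the ``$\cD_{\exp}$ of the linear factor with root $ab$,'' which after the reductions is essentially a restatement of property (1) of the previous theorem, $(1-aX)\star h(X)=h(aX)$, combined with the scaling behavior $\cD(f(aX))=a(\cD f)(aX)$ from the Definition of $\cD$. The symmetry of the final statement under swapping the roles of $f$ and $g$ is manifest once this pairwise identity is established.

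The main obstacle I anticipate is bookkeeping the exponential and the logarithmic derivative correctly through the $\star$ product, in particular verifying the pairwise linear-factor identity without circularity: one must be careful that $\star$ is being applied to $\cD_{\exp}(g)$, which is a genuine element of $\cA$ (a unit power series), and not naively to ``roots'' that may not exist in the classical sense. The safe approach is to stay at the level of the root data afforded by the polynomial reduction and to invoke property (1), $(1-aX)\star f(X)=f(aX)$, as the fundamental bridge, so that the entire computation collapses to the scaling law for $\cD$ applied termwise.
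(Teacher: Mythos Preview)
Your proposal is correct and rests on the same ingredients as the paper's proof: the universality reduction to complex polynomials, the root interpretation of $\star$, and property (1), $(1-aX)\star h(X)=h(aX)$, as the bridge. The organization differs slightly. The paper keeps $f$ intact and only factors $g$: from $(f\star g)(z)=\prod_j f(z/\b_j)$ it computes $z\,\cD(f\star g)(z)=\sum_j (z/\b_j)(\cD f)(z/\b_j)$, exponentiates, and recognizes the result as $g\star e^{z\cD f}$ via property (1). You instead factor both $f$ and $g$, use the multiplicativity of $\cD_{\exp}$ and the distributivity of $\star$ to reduce to the single pairwise identity $(1-aX)\star\cD_{\exp}(1-bX)=\cD_{\exp}(1-abX)$, and then invoke property (1). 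Both routes are short; the paper's is marginally more direct since it avoids factoring $f$. Your caution that $\cD_{\exp}(1-bX)$ has no classical roots is well placed, and your resolution via property (1) (which applies to any $h\in\cA$, with or without roots) is exactly the right one.
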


\begin{proof}
We observe again
that it is enough to prove the result for $f$ and $g$ polynomials
with complex coefficients. We consider $f$ and $g$ polynomials with respective sets
of zeros $(\a_i)$ and $(\b_j)$. Observe that 
$$
(f\star g)(z)=\prod_{i,j} \left (1-{z\over \a_i \b_j}\right ) 
=\prod_j f\left ({z\over \b_j } \right ) \ ,
$$
thus
$$
\cD( f\star g) (z)=\sum_j {1\over \b_j } (\cD f)(z/\b_j) \ ,
$$
so
$$
z\cD( f\star g) (z)=\sum_j {z\over \b_j } (\cD f)(z/\b_j) \ ,
$$
and using Theorem 2.9
$$
e^{z \cD (f \star g) }=\prod_j e^{{z\over \b_j } (\cD f)(z/\b_j) }
=g\star e^{z\cD(f)(z)} \ .
$$
\end{proof}

\begin{corollary}
Let $f\in \cA$, $f(X)=1+f_1 X+\ldots $,
and $a\in A$. 
We have 
$$
f\star e^{aX} =e^{-af_1 X} \ .
$$
\end{corollary}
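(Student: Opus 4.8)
The plan is to exploit the Main Formula just proved, taking $g=e^{aX}$. The first observation is that $e^{aX}$ is a fixed point of the exponential logarithmic derivative $\cD_{\exp}$: indeed $\cD(e^{aX})=(e^{aX})'/e^{aX}=a$ is the constant $a\in A$, so $X\cD(e^{aX})=aX$ and hence $\cD_{\exp}(e^{aX})=e^{X\cD(e^{aX})}=e^{aX}$. Feeding this into the Main Formula in the form $\cD_{\exp}(f\star g)=f\star \cD_{\exp}(g)$ with $g=e^{aX}$ yields $\cD_{\exp}(f\star e^{aX})=f\star e^{aX}$, so that $u:=f\star e^{aX}$ is itself a fixed point of $\cD_{\exp}$.

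Next I would characterize all fixed points of $\cD_{\exp}$ in $\cA$. Since $\exp:(XA[[X]],+)\to(\cA,.)$ is an isomorphism, write $u=e^v$ with $v\in XA[[X]]$; then $\cD(u)=v'$ and the fixed-point equation $\cD_{\exp}(u)=u$ reads $e^{Xv'}=e^v$, hence $Xv'=v$ by injectivity of $\exp$ on $XA[[X]]$. Writing $v=\sum_{n\geq 1}v_n X^n$, the relation $Xv'=v$ gives $(n-1)v_n=0$ for every $n$; because $\QQ\subset A$ the integer $n-1$ is invertible for $n\geq 2$, forcing $v_n=0$ there. Thus $v=cX$ for some $c\in A$ and $u=e^{cX}$, that is, $f\star e^{aX}=e^{cX}$.

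It remains to pin down the constant $c$, which I would do by comparing first-order coefficients. Writing $e^{aX}=1+aX+\ldots$, the value $c_1=-a_1b_1$ from the coefficient formulas gives, with $a_1=f_1$ and $b_1=a$, that the coefficient of $X$ in $f\star e^{aX}$ equals $-af_1$. Since $e^{cX}=1+cX+\ldots$, matching coefficients of $X$ yields $c=-af_1$, whence $f\star e^{aX}=e^{-af_1X}$, as claimed.

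The only genuinely delicate point is the characterization of the fixed points of $\cD_{\exp}$, where the hypothesis $\QQ\subset A$ is essential: without invertibility of $n-1$ the equation $(n-1)v_n=0$ would not force $v_n=0$, and spurious fixed points could appear. So the clean conclusion really relies on being in the setting where the exponential is available, which is exactly the standing assumption whenever $\exp$ occurs.
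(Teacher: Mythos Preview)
Your proof is correct and follows essentially the same approach as the paper: both apply the Main Formula with $g=e^{aX}$, observe that $e^{aX}$ is a fixed point of $\cD_{\exp}$, deduce that $u=f\star e^{aX}$ satisfies the same fixed-point equation, reduce via $u=e^v$ to the relation $Xv'=v$ (the paper writes it as $G'=\tfrac{1}{X}G$), solve it to get $v=cX$, and identify $c$ from the formula $c_1=-a_1b_1$. Your explicit remark that solving $(n-1)v_n=0$ requires $\QQ\subset A$ makes precise a point the paper leaves implicit.
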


\begin{proof}
Put $g(X)=e^{aX}$ in the Main Formula. Observe that
$$
\cD (e^{aX}) =a \ .
$$
We get
$$
e^{X\cD (f \star e^{aX})}=f\star e^{X\cD (e^{aX})}
=f\star e^{aX} \ .
$$
Thus $F(X) =f\star e^{aX}$ satisfies the 
differential equation
$$
F=e^{X\cD (F)}=e^{X F'/ F} \ .
$$
We define $G=\log F \in XA[[X]]$ then $G'=F'/F$ and $G$ satisfies the 
differential equation
$$
G'={1\over X} G \ .
$$
If we write $G(X)=a_0X+a_1X^2+\ldots$ this means that for $n\geq 1$, 
$(n-1)  a_n=0$, so $a_n=0$.
Therefore,  the only formal solutions are $G(X)=a_0 X$ for some constant
$a_0 \in A$. So finally
$$
F(X)=f\star e^{aX}=e^{a_0 X} \ .
$$
To determine $a_0$, using the formula for $c_1$, we observe that
\begin{align*}
f\star e^{aX} &=(1+f_1X+\ldots )\star (1+aX+\ldots) \\
&= 1 -af_1 X+\ldots 
\end{align*}
and therefore $a_0=-f_1 a$.
\end{proof}

\bigskip

More generally we have the following result.

\begin{corollary}
Let $f\in \cA$, $f(X)=1+f_1 X+\ldots $,
$a\in A$, and $n\geq 1$ positive integer.
We have 
$$
f\star e^{aX^n} =e^{a \tilde Q_n(f_1,\ldots , f_n) X^n} \ ,
$$
where 
$$
\tilde Q_n(X_1,\ldots , X_n)=(-1)^n Q_n(X_1,\ldots , X_n, 0,\ldots, 0, Y)/Y
=-n X_n+ P_n( X_1, \ldots X_{n-1})
$$
is a polynomial vanishing when $X_1=X_2=\ldots =X_n=0$.
\end{corollary}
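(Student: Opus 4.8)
The plan is to generalize the computation from the previous corollary, which handled the case $n=1$ with $g(X)=e^{aX}$. The strategy is to feed $g(X)=e^{aX^n}$ into the Main Formula and extract a differential equation for $F=f\star g$. First I would compute $\cD(e^{aX^n})$: since $\cD(h)=h'/h$ and $h=e^{aX^n}$, we get $h'=anX^{n-1}e^{aX^n}$, so $\cD(e^{aX^n})=anX^{n-1}$, and hence $X\cD(e^{aX^n})=anX^n$. Plugging into the Main Formula gives
\begin{align*}
e^{X\cD(f\star e^{aX^n})} &= f\star e^{X\cD(e^{aX^n})} = f\star e^{anX^n} \ .
\end{align*}

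The key observation is that $f\star e^{anX^n}$ is again of the same form as what we started with (up to replacing $a$ by $an$), so rather than iterating I would instead argue directly that $F=f\star e^{aX^n}$ must itself be of the form $e^{cX}$ for some constant $c\in A$. Setting $F=e^G$ with $G\in XA[[X]]$, the Main Formula relation $F=e^{X\cD(F)}$ would still be unavailable in exactly that clean form here (because the right-hand side is $f\star e^{anX^n}$, not $f\star e^{X\cD(F)}$), so I must be more careful. The cleaner route is to use associativity and the earlier corollary: by associativity of $\star$, one can reduce $f\star e^{aX^n}$ to a one-variable problem. Concretely, I would observe that the coefficient $c_1$ of $f\star g$ depends only on finitely many coefficients, and that by the structure of the Main Formula the result $f\star e^{aX^n}$ has logarithmic derivative determined by $g\star\exp(X\cD(f))$ evaluated against the simple series $e^{aX^n}$; iterating or comparing orders shows all coefficients beyond the linear term vanish.

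The most direct argument, which I would ultimately write, mirrors the proof of the preceding corollary. Since it suffices to verify the identity for $f$ a polynomial over $\CC$ with zeros $(\a_i)$, I would use the product formula from Theorem 2.9, namely $f\star g(z)=\prod_i g(z/\a_i)$. Taking $g(z)=e^{az^n}$ gives
\begin{align*}
f\star e^{aX^n} &= \prod_i e^{a(X/\a_i)^n} = \exp\left(aX^n\sum_i \a_i^{-n}\right) \ .
\end{align*}
Wait — this would give $e^{(\text{const})X^n}$, not $e^{(\text{const})X}$, so the product formula alone does not reproduce the stated answer; the discrepancy signals that the stated corollary as written encodes a subtler phenomenon, and the correct reduction must come from the Main Formula's differential-equation mechanism rather than the naive product. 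I would therefore return to the differential equation: writing $F=f\star e^{aX^n}$ and $G=\log F$, the Main Formula together with $X\cD(e^{aX^n})=anX^n$ forces a relation of the form $XG'=$ (a series whose lowest-order behavior is linear), and solving order by order shows $G(X)=a\tilde Q_n(f_1,\ldots,f_n)X$ with the claimed polynomial.

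The hard part will be pinning down the explicit polynomial $\tilde Q_n$ and verifying the identity $\tilde Q_n(f_1,\ldots,f_n)=(-1)^nQ_n(f_1,\ldots,f_n,0,\ldots,0,a)/a=-nf_n+P_n(f_1,\ldots,f_{n-1})$. For this I would go back to the definition of $Q_n$ in Proposition 2.1 and specialize the $Y$-variables to the coefficients of $e^{aX^n}$, which are $b_k=0$ for $k\neq n$ and $b_n=a$ (up to the sign/normalization conventions in the e\~ne-product definition). The weight and structure statement in Proposition 2.1 — that $(-1)^pQ_p=-pX_pY_p+P_p$ with $P_p$ carrying no $X_pY_p$ monomial and total weight $p$ — is exactly what guarantees that after this specialization the leading term is $-nf_n\cdot a$ and the remainder $P_n$ involves only $f_1,\ldots,f_{n-1}$, and that dividing by $a$ is legitimate and yields a polynomial vanishing at $f_1=\cdots=f_n=0$. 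Confirming that only $c_1$ survives (so $F=e^{(\text{linear})X}$ with no higher terms) is the step requiring the full force of the Main Formula rather than the product formula, and it is where I expect to spend the most care.
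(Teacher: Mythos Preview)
Your confusion stems from a typo in the stated corollary: the right-hand side should read $e^{a\tilde Q_n(f_1,\ldots,f_n)\,X^n}$, not $e^{a\tilde Q_n(f_1,\ldots,f_n)\,X}$. The paper's own proof confirms this --- it derives the ODE $nG=XG'$ whose only formal solutions are $G(X)=a_0X^n$, and the subsequent corollary ($e^{aX^n}\star e^{bX^n}=e^{-nabX^n}$) is only consistent with the exponent $X^n$. Hence your product-formula computation
\[
f\star e^{aX^n}=\prod_i e^{a(X/\a_i)^n}=\exp\Bigl(aX^n\sum_i\a_i^{-n}\Bigr)
\]
was in fact correct, and you should not have abandoned it; together with Newton's identities relating the power sum $\sum_i\a_i^{-n}$ to $f_1,\ldots,f_n$ it already establishes the result over $\CC$, and hence universally.

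In your Main Formula approach you were on the paper's track but missed the single step that closes the loop. After computing $e^{X\cD(F)}=f\star e^{naX^n}$ with $F=f\star e^{aX^n}$, note that $e^{naX^n}=(e^{aX^n})^n$, so by the ring identity $f\star g^n=(f\star g)^n$ one has $f\star e^{naX^n}=F^n$. This yields the self-contained equation $F^n=e^{XF'/F}$, i.e.\ $nG=XG'$ for $G=\log F$, forcing $G=a_0X^n$. Your sketch for determining $a_0$ from the definition of the e\~ne coefficients and the structure of $Q_n$ in Proposition~\ref{prop:sym} is then correct.
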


\begin{proof}

As before, using the main formula we get
\begin{align*}
e^{X \cD (f\star e^{a X^n})} &=f\star 
e^{X \cD (e^{a X^n} )} \\
&=f\star e^{naX^n} \\
&=\left ( f\star e^{aX^n} \right )^n 
\end{align*}
Then $F(X)=f\star e^{aX^n}$ satisfies the differential 
equation
$$
F^n =e^{XF'/F}
$$
Thus $G=\log F$ satisfies
$$
nG=XG'
$$
which has only formal solutions $G(X)=a_0 X^n$, $a_0\in A$. 
To determine the constant $a_0$ we write the first term of the 
expansion
\begin{align*}
f\star e^{a X^n} &=(1+f_1 X+\ldots )\star 
(1+a X^n +\ldots )\\
&=1+(-1)^n Q_n(f_1,\ldots, f_n, 0,\ldots , 0, a) X^n+\ldots 
\end{align*}
Thus 
$$
a_0=(-1)^n Q_n(f_1,\ldots, f_n, 0,\ldots , 0, a) =(-nf_n +\ldots ) a
$$
where the quantity between brackets is independent of $a$ 
and has monomials of weight $n$ on the coefficients $(f_i)$
(see proposition 2.1). 
\end{proof}

\begin{corollary}
For $n,m \geq 1$ positive integers, 
and $a,b \in A$, we have if $n\not= m$,
$$
e^{aX^n}\star e^{bX^m} =1
$$
and for $n=m$,
$$
e^{aX^n}\star e^{bX^n} =e^{-n ab X^n} \ .
$$
 \end{corollary}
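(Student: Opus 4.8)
The plan is to deduce both cases directly from the preceding corollary, which already evaluates $f\star e^{cX^k}$ for an \emph{arbitrary} $f=1+f_1X+\ldots\in\cA$ as $f\star e^{cX^k}=e^{\,c\,\tilde Q_k(f_1,\ldots,f_k)\,X^k}$ (the exponent being $X^k$, as its proof shows by solving $kG=XG'$ for $G=\log F$), with $\tilde Q_k(f_1,\ldots,f_k)=-k f_k+P_k(f_1,\ldots,f_{k-1})$. The only further ingredients are the coefficients of an exponential of a monomial, $e^{cX^k}=1+cX^k+\tfrac{c^2}{2}X^{2k}+\ldots$, whose coefficient of $X^j$ vanishes for $1\le j<k$ and equals $c$ for $j=k$, together with the commutativity of $\star$ established in the ring theorem.

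First I treat the case $n\neq m$. By commutativity I may assume $m<n$. Applying the preceding corollary with $f=e^{aX^n}$ and distinguished factor $e^{bX^m}$ (so the parameters are $c=b$, $k=m$) gives
$$
e^{aX^n}\star e^{bX^m}=e^{\,b\,\tilde Q_m(f_1,\ldots,f_m)\,X^m},
$$
where $f_1,\ldots,f_m$ are the first $m$ coefficients of $f=e^{aX^n}$. Since $n>m$, all of these coefficients vanish, and because $\tilde Q_m$ vanishes when its arguments are all zero the whole exponent is $0$; hence $e^{aX^n}\star e^{bX^m}=1$. The subcase $m>n$ follows by exchanging the two factors via commutativity.

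For the case $n=m$ I again apply the preceding corollary with $f=e^{aX^n}$ and distinguished factor $e^{bX^n}$, obtaining $e^{aX^n}\star e^{bX^n}=e^{\,b\,\tilde Q_n(f_1,\ldots,f_n)\,X^n}$. Now the first $n$ coefficients of $f=e^{aX^n}$ are $f_1=\ldots=f_{n-1}=0$ and $f_n=a$. Using the structural identity $\tilde Q_n(f_1,\ldots,f_n)=-n f_n+P_n(f_1,\ldots,f_{n-1})$, in which $P_n$ depends only on $f_1,\ldots,f_{n-1}$, together with the corollary's assertion that $\tilde Q_n$ vanishes at the origin (forcing $P_n(0,\ldots,0)=0$), I get $\tilde Q_n(0,\ldots,0,a)=-na$. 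Therefore $e^{aX^n}\star e^{bX^n}=e^{\,b(-na)\,X^n}=e^{-nab\,X^n}$, as claimed.

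I do not expect a genuine obstacle: the argument is a direct substitution into the preceding corollary, and the single point that must be handled with care is the evaluation $\tilde Q_n(0,\ldots,0,a)=-na$, which is exactly the cancellation of $P_n$'s constant term just noted (equivalently, $P_n$ has no constant term by the weight-$n$ homogeneity in Proposition \ref{prop:sym}). As a consistency check, the explicit coefficients $c_1=-a_1b_1$ and $c_2=-2a_2b_2+a_2b_1^2+a_1^2b_2$ reproduce, for $e^{aX}\star e^{bX}$, the expansion of $e^{-abX}$ and, for $e^{aX}\star e^{bX^2}$, vanish, matching the two cases above. One could alternatively bypass the preceding corollary and rederive each case straight from the Main Formula by solving the induced equation $nG=XG'$ for $G=\log\bigl(e^{aX^n}\star e^{bX^m}\bigr)$, but this merely repeats the proof of that corollary.
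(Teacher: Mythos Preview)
Your argument is correct and is exactly the intended deduction: the paper provides no separate proof for this corollary, leaving it as an immediate specialization of the preceding one, which is precisely what you do. Your handling of the only delicate point, namely $\tilde Q_n(0,\ldots,0,a)=-na$ via the vanishing of the weight-$n$ remainder $P_n$ at the origin, is clean and matches the structural information recorded in Proposition~\ref{prop:sym}; you also correctly note (and work around) the typo in the exponent of the preceding corollary, where $X$ should read $X^n$.
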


\section{Exponential form and applications.} \label{sec:expo}

We assume in this section that $\QQ\subset A$.
We can cut short the previous discussions and formulas with the following 
key result. It shows that the e\~ne product operator $\star$ has
a very simple expression in exponential form, or, in other words, we have
the remarkable property that the exponential bilinearizes the e\~ne product.

\begin{theorem} \textbf{(Exponential form).} 
Let $f, g \in \cA$. Using the isomorphism given by the exponential
map, we can write
\begin{align*}
f&=e^F=e^{F_1 X+F_2 X^2+F_3 X^3 +\ldots} \\
g&=e^G=e^{G_1 X+G_2 X^2+G_3 X^3 +\ldots} 
\end{align*}
where $F,G \in A[[X]]$.

We have
\begin{align*}
f\star g &=\exp \left ( F_1 X+F_2 X^2+F_3 X^3 +\ldots \right )\star
\exp \left ( G_1 X+G_2 X^2+G_3 X^3 +\ldots \right ) \\
&=\exp \left ( -F_1 G_1 X -2 F_2 G_2 X^2
-3 F_3 G_3 X^3 +\ldots \right ) \  .
\end{align*}
We denote by $\star_e$ the exponential form of the e\~ne product
$$
F\star_e G= -F_1 G_1 X -2 F_2 G_2 X^2 -3 F_3 G_3 X^3 +\ldots \ .
$$
\end{theorem}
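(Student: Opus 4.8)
The plan is to prove the Exponential Form theorem by reducing the statement to the action of the éne product on single exponentials $e^{aX^n}$, for which the answer is already known from Corollary 3.5 (the last corollary in Section 3). The key observation is that the éne product distributes over multiplication in $\cA$, and under the exponential isomorphism $\exp:(XA[[X]],+)\to(\cA,.)$ this distributivity becomes additivity: if $f=e^F$ and $g=e^G$, then writing $F=\sum_n F_n X^n$ and $G=\sum_m G_m X^m$ corresponds to factoring $f=\prod_n e^{F_n X^n}$ and $g=\prod_m e^{G_m X^m}$ as (convergent, in the $X$-adic topology) products in $\cA$. The éne product of these two factorizations then expands, by repeated application of the distributivity theorem (Theorem \ref{thm:distributivity}), into a product over all pairs $(n,m)$ of the elementary éne products $e^{F_n X^n}\star e^{G_m X^m}$.

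**Next I would** invoke Corollary 3.5 to evaluate each elementary factor: it states precisely that $e^{aX^n}\star e^{bX^m}=1$ when $n\neq m$, and $e^{aX^n}\star e^{bX^n}=e^{-n\,ab\,X^n}$ when the exponents agree. Applying this with $a=F_n$ and $b=G_m$, every off-diagonal pair $(n,m)$ with $n\neq m$ contributes the neutral element $1$ and so drops out of the product, while each diagonal pair contributes $e^{-n F_n G_n X^n}$. Multiplying these surviving diagonal factors together in $\cA$ — which under the exponential corresponds to adding their logarithms — yields $\exp\left(-\sum_n n F_n G_n X^n\right)$, which is exactly the claimed formula $f\star g=\exp(-F_1G_1X-2F_2G_2X^2-3F_3G_3X^3+\ldots)$, and this is the definition of $F\star_e G$.

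**The main obstacle** to make rigorous is the interchange of the (infinite) éne product with the (infinite) ordinary products defining $f$ and $g$: distributivity as stated in Theorem \ref{thm:distributivity} is a finite, two-factor identity, so one must justify passing to the limit. The clean way to handle this is to argue coefficient by coefficient. Since the $n$-th coefficient $c_n$ of any éne product depends only on the coefficients of order $\leq n$ of the two inputs (as noted after the definition of $\star$), for computing the coefficient of $X^n$ in $f\star g$ one may truncate both $F$ and $G$ to their terms of degree $\leq n$, reducing everything to a finite product of finitely many factors $e^{F_k X^k}\star e^{G_l X^l}$ with $k,l\leq n$. Finite distributivity and associativity (both established in Theorem 2.6) then apply directly, and Corollary 3.5 disposes of each factor. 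Because this holds for every $n$, the two formal power series agree. I would present this truncation argument explicitly, as it is the one genuinely nontrivial point; the remaining manipulations are the bookkeeping of collecting diagonal terms.
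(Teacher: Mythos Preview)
Your proposal is correct and follows essentially the same route as the paper: factor $f$ and $g$ as infinite products $\prod_i e^{F_iX^i}$ and $\prod_j e^{G_jX^j}$, distribute $\star$ over both products, and apply the last corollary of Section~3 to each elementary factor $e^{F_iX^i}\star e^{G_jX^j}$. In fact you go a bit further than the paper, which writes down the distributivity over infinite products without comment; your truncation argument (the $n$-th coefficient of $f\star g$ depends only on coefficients of order $\leq n$, so one may work with finite products modulo $X^{n+1}$) is exactly the missing justification, and is worth including.
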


\begin{proof}

We simply use the distributivity of $\star$ and the 
previous corollary:
\begin{align*}
f\star g &=\exp\left (\sum_{i=1}^{+\infty} F_i X^i \right )
\star \exp\left (\sum_{j=1}^{+\infty} G_j X^j \right ) \cr
&=\left ( \prod_{i=1}^{+\infty } \exp (F_i X^i) \right ) \star
\left ( \prod_{j=1}^{+\infty } \exp (G_j X^j) \right ) \cr 
&= \prod_{i,j =1}^{+\infty} \exp (F_i X^i) \star \exp (G_j X^j) \cr
&= \prod_{i =1}^{+\infty} \exp (-i F_i G_i X^i) \cr
&=\exp \left ( -\sum_{i=1}^{+\infty} i F_i G_i X^i \right )
\end{align*}
\end{proof}


Using this formula we can now determine exactly which elements
of the ring $(\cA , . , \star)$ are divisors of zero (the zero
is the constant series $1$).

\begin{theorem}
The divisors of zero in the e\~ne-ring 
$(\cA , . , \star)$ are exactly those $f\in \cA$ such
that if we write
$$
f=e^F=\exp \left (\sum_{i=1}^{+\infty} F_i X^i \right )
$$
there is some coefficient $F_i$ that is $0$ or a divisor of $0$ 
in $A$. Thus, if $A$ has no zero divisors, only those $f \in \cA$ for
which some $F_i=0$ are divisors of $0$.

We remind that $\QQ\subset A$. The elements $f\in \cA$ that are not divisors of zero
are e\~ne-invertible, i.e. are 
units of the ring $\cA$, if and only if each one of its 
exponential coefficients has an inverse, the inverse
being
$$
g=e^G=\exp \left (\sum_{i=1}^{+\infty} G_i X^i \right )
$$
with 
$$
G_i = {1 \over i^2 } F_i^{-1} \ .
$$
\end{theorem}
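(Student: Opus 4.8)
The plan is to transport the entire statement through the exponential isomorphism and reduce both assertions to coefficientwise conditions in $A$, using the Exponential form theorem. First I would record the exponent of the ring unity: since $1-X=\exp(\log(1-X))=\exp\left(-\sum_{i\geq 1}\frac{1}{i}X^i\right)$, the unity $1-X$ has exponential coefficients $U_i=-1/i$. (Equivalently, the defining relation $f\star(1-X)=f$ combined with the exponential form forces $-iF_iU_i=F_i$ for every $f$, hence $U_i=-1/i$.) Writing $f=e^F$ and $g=e^G$ with $F=\sum F_iX^i$ and $G=\sum G_iX^i$, the Exponential form theorem gives $f\star g=\exp\left(-\sum_i iF_iG_iX^i\right)$, so $f\star g=1$ (the ring zero) is equivalent to $iF_iG_i=0$ for all $i$; and because $\QQ\subset A$ makes each $i$ a unit, this is simply $F_iG_i=0$ for all $i$.

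For the divisor-of-zero characterization I would argue both directions from this reduction. If some $F_{i_0}$ is $0$ or a zero divisor of $A$, pick $c\in A\setminus\{0\}$ with $F_{i_0}c=0$ (taking $c=1$ when $F_{i_0}=0$), set $G_{i_0}=c$ and $G_i=0$ otherwise; then $g=\exp(cX^{i_0})\neq 1$ while $f\star g=1$, so $f$ is a divisor of zero. Conversely, if $f\star g=1$ for some $g=e^G\neq 1$, choose $i_0$ with $G_{i_0}\neq 0$; the relation $F_{i_0}G_{i_0}=0$ then exhibits $F_{i_0}$ as $0$ or a zero divisor. The integral-domain assertion is immediate, since there the only element annihilating a nonzero element is $0$.

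For the unit part I would work among the non-divisors of zero (equivalently, those $f$ for which every $F_i$ is neither $0$ nor a zero divisor) and solve $f\star g=1-X$. By the exponential form this reads $-iF_iG_i=-1/i$, i.e. $F_iG_i=1/i^2$ for all $i$. If each $F_i$ is invertible, set $G_i=\frac{1}{i^2}F_i^{-1}$, which is legitimate since $1/i^2\in A$; then $g=\exp\left(\sum_i G_iX^i\right)\in\cA$ satisfies $f\star g=1-X$, so $f$ is a unit with exactly the stated inverse. Conversely, if $f$ is a unit there is a $G$ with $F_iG_i=1/i^2$, and since $i^2$ is invertible the element $i^2G_i$ is an inverse of $F_i$; hence every $F_i$ is invertible.

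The only points demanding care, rather than any real obstacle, are tracking where $\QQ\subset A$ enters (to invert $i$ in the divisor reduction and $i^2$ in the inverse formula) and correctly pinning down the exponent $-\sum_{i\geq 1}\frac{1}{i}X^i$ of the unity $1-X$. Once these are in place, the Exponential form theorem converts the whole statement into elementary divisibility bookkeeping in $A$, carried out one coordinate at a time.
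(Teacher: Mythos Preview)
Your proposal is correct and follows exactly the same route as the paper: compute the exponential form of the unity $1-X=\exp\left(-\sum_{i\geq 1}\tfrac{1}{i}X^i\right)$ and then read off both claims coordinatewise from the Exponential form theorem. The paper's proof is simply the one-line version of what you have written out in full.
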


\begin{proof}

The e\~ne neutral element $1-X$ has the exponential form
$$
1-X=\exp (\log (1-X))=\exp \left ( 
-\sum_{i=1}^{+\infty} {1\over i} X^i \right ) \ .
$$
and the result follows.
\end{proof}


\begin{remark}

Notice that when $A\subset \CC$, if $f$ has infinite radius
of convergence (i.e. it is an entire function) 
and is e\~ne-invertible, then its e\~ne-inverse has
zero radius of convergence.
 
\end{remark}

\section{Some e\~ne products.} \label{sec:prod}

The next result shows that when $A$ is a field, the e\~ne product of rational functions
is a rational function. But we have a more general result.

\begin{theorem}Let $A$ be an arbitrary unitary commutative ring. The e\~ne product leaves invariant
the multiplicative subgroup $(1+XA[X])/(1+XA[X]) \subset 1+XA[[X]]$ of
formal power series that are quotients of polynomials in $1+XA[X]$. We denote
this subgroup by $A_0(X)$. Thus
$A_0(X) \subset \cA$ is a subring of the e\~ne ring.
More precisely, let $R_1(X) , R_2(X) \in A_0(X)$ with
\begin{align*}
R_1(X) &= {P_1(X) \over Q_1(X) } \cr
R_2(X) &= {P_2(X) \over Q_2(X) }
\end{align*}
with $P_1(X) , P_2(X)  , Q_1(X)  , Q_2(X) \in 1+XA[X]$. Then
$$
R_1(X) \star R_2(X)={\left (P_1(X) \star P_2(X) \right ) .
\left (Q_1(X) \star Q_2(X) \right ) \over \left (P_1(X) \star Q_2(X) \right )
.\left (Q_1(X) \star P_2(X) \right )} \ .
$$
We can define the degree of  $R(X)={P(X)\over Q(X)}\in A_0(X)$ by
$$
\deg R(X)=\max (\deg P(X), \deg Q(X)) \ .
$$
Then we have
$$
\deg ( R_1(X) \star R_2(X) )=\deg R_1(X) . \deg R_2(X) \ .
$$
\end{theorem}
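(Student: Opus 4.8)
The plan is to reduce everything to the distributivity and inversion properties already established for the e\~ne product, using the key fact that $\star$ interacts with multiplicative inverses in the group $(\cA,.)$ according to the rule $f\star(1/g)=(1/f)\star g=1/(f\star g)$ (the second bullet of the Corollary following the ring theorem). First I would write each rational function as $R_1=P_1/Q_1$ and $R_2=P_2/Q_2$, viewing the quotient as the group operation in $(\cA,.)$ applied to $P_1$ and $Q_1^{-1}$ (where the inverse is the multiplicative inverse of power series, which makes sense since $Q_1\in 1+XA[X]\subset\cA$). The whole computation is then a formal manipulation inside the ring $(\cA,.,\star)$, so the targeted identity for $R_1\star R_2$ should fall out of bilinearity-type relations.

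The core step is to expand $R_1\star R_2=(P_1 . Q_1^{-1})\star(P_2 . Q_2^{-1})$ using distributivity of $\star$ over the multiplication $.\,$ in both slots. Applying Theorem~\ref{thm:distributivity} in the first variable gives
\begin{align*}
(P_1 . Q_1^{-1})\star(P_2 . Q_2^{-1}) &=\left(P_1\star(P_2 . Q_2^{-1})\right).\left(Q_1^{-1}\star(P_2 . Q_2^{-1})\right).
\end{align*}
Then I would distribute again in the second variable, and convert each factor of the form $U^{-1}\star V$ or $U\star V^{-1}$ into $(U\star V)^{-1}$ using the inversion rule. Collecting the four resulting factors, the terms $P_1\star P_2$ and $Q_1\star Q_2$ land in the numerator while $P_1\star Q_2$ and $Q_1\star P_2$ land in the denominator, which is precisely the claimed formula. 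Since $P_1,P_2,Q_1,Q_2\in 1+XA[X]$ are polynomials, each pairwise e\~ne product is again a polynomial in $1+XA[X]$ (the e\~ne product of polynomials is a polynomial, as recorded in the earlier proposition on roots), so $R_1\star R_2$ is visibly a ratio of such polynomials and hence lies in the stated subgroup; this simultaneously proves the invariance claim and the subring statement.

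For the degree formula I would pass to the case $A\subset\CC$ and argue geometrically with roots, then invoke universality if $A$ is arbitrary. If $P_1,Q_1$ have degrees $p_1,q_1$ and $P_2,Q_2$ have degrees $p_2,q_2$, the e\~ne product of two polynomials whose roots are $(\alpha_i)$ and $(\beta_j)$ has roots $(\alpha_i\beta_j)$, hence degree equal to the product of the degrees. Counting degrees in numerator and denominator of the displayed formula gives $(p_1p_2+q_1q_2)-(p_1q_2+q_1p_2)=(p_1-q_1)(p_2-q_2)$, and since $\deg R_i=p_i-q_i$ this yields $\deg(R_1\star R_2)=\deg R_1 . \deg R_2$.

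The main obstacle I anticipate is the bookkeeping around cancellation: when one forms the four e\~ne products $P_1\star P_2,\ Q_1\star Q_2,\ P_1\star Q_2,\ Q_1\star P_2$, there can be common roots between numerator and denominator (for instance if $P_1$ and $Q_1$ share a root, or if the root multisets of the four products overlap), so the expression need not be in lowest terms. This means the degree formula holds for the degree of the rational function $R_1\star R_2$ as an element of the field $A(X)$ only after cancellation, and I would need to verify that the excess factors cancel exactly in matched pairs rather than leaving a residual discrepancy. I expect this to work cleanly because the root multiset of the numerator is $(\alpha_i\gamma_k)\cup(\beta_j\delta_l)$ and that of the denominator is $(\alpha_i\delta_l)\cup(\beta_j\gamma_k)$ (writing $(\alpha_i),(\gamma_k)$ for roots of $P_1,Q_1$ and $(\beta_j),(\delta_l)$ for roots of $P_2,Q_2$), and a careful comparison shows the net degree is exactly $(p_1-q_1)(p_2-q_2)$ regardless of incidental coincidences; nonetheless this root-counting comparison is the step that requires genuine care rather than formal symbol-pushing.
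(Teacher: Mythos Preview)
Your derivation of the displayed formula for $R_1\star R_2$ is exactly what the paper does: it simply says ``it is just simple distributivity of the e\~ne-product,'' and your expansion $(P_1.Q_1^{-1})\star(P_2.Q_2^{-1})$ together with the inversion rule $f\star(1/g)=1/(f\star g)$ is precisely that. So for the main content of the theorem your approach coincides with the paper's.

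The degree argument, however, does not match. You compute $\deg R_i=p_i-q_i$ and obtain $(p_1-q_1)(p_2-q_2)$ for the e\~ne product, but the paper explicitly defines the degree of a rational function $R=P/Q$ as $\deg R=\max(\deg P,\deg Q)$, not as the difference. With that convention your computation $(p_1p_2+q_1q_2)-(p_1q_2+q_1p_2)$ is not the relevant quantity, and the identity $\deg(R_1\star R_2)=\deg R_1\cdot\deg R_2$ is meant to be read with the $\max$ definition. (Your worry about cancellation is well founded in either interpretation, and the paper's own one-line justification ``the formula for the degree follows from the formula for the e\~ne-multiplication'' is not much more detailed than yours; but at a minimum you should recompute with the $\max$ convention rather than the difference.) Also note a labeling slip in your last paragraph: with $(\a_i),(\g_k)$ the roots of $P_1,Q_1$ and $(\b_j),(\d_l)$ the roots of $P_2,Q_2$, the numerator multiset is $(\a_i\b_j)\cup(\g_k\d_l)$, not $(\a_i\g_k)\cup(\b_j\d_l)$.
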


We observe that when $A\subset \CC$,
the zeros of $R_1(z)\star R_2(z)$ are the products of zeros of
$R_1$ and $R_2$ or the product of poles of $R_1$ and $R_2$. Also
the poles of $R_1(z)\star R_2(z)$ are the products of a pole and
a zero of $R_1$ and $R_2$. In short, we can write
\begin{align*}
{\hbox {\rm zero}}\ \star \ {\hbox {\rm zero}}\ &= \ {\hbox {\rm zero}} \ \\
{\hbox {\rm pole}}\ \star \ {\hbox {\rm pole}}\ &= \ {\hbox {\rm zero}} \ \\
{\hbox {\rm zero}}\ \star \ {\hbox {\rm pole}}\ &= \ {\hbox {\rm pole}} \ \\
{\hbox {\rm pole}}\ \star \ {\hbox {\rm zero}}\ &= \ {\hbox {\rm pole}} \ 
\end{align*}
Notice that this information, with multiplicities and the normalization of $1$ at $0$,
is enough to determine uniquely
$R_1\star R_2$.

\begin{proof}
It is just simple distributivity of the e\~ne product.
The formula for the degree follows from the formula
for the degrees of the e\~ne multiplication of polynomials.
\end{proof}


The previous observations do extend to meromorphic functions
on $\CC$ quotient of two entire functions, when $A\subset \CC$.

\begin{theorem} We have the same formula as before for the
e\~ne product. More precisely,
if $A\subset \CC$ and $f_1  , f_2 \in \cA$ are meromorphic
functions, quotient of entire functions of order $<1$ with
coefficients in $A$, then $f_1\star f_2$  is a meromorphic
function quotient of entire functions of order $<1$ given by the
above formula, and whose zeros are the products of zeros of $f_1$
and $f_2$, or the product of poles of $f_1$ and $f_2$, and whose
poles are the products of a zero of $f_1$ (resp. $f_2$) and a pole
of $f_2$ (resp. $f_1$). 
\end{theorem}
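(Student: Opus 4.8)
The plan is to reduce everything to the previous theorem for rational functions (quotients of polynomials) by an approximation/factorization argument, since an entire function of order $<1$ is, by the Hadamard-Weierstrass factorization theorem, determined up to the normalization $f(0)=1$ by its zeros via the convergent product $f(z)=\prod_i(1-z/\a_i)$ (no exponential factor and no extra polynomial factor appear, because the genus is $0$). Thus if $f_1,f_2\in\cA$ are meromorphic of the stated type, I write $f_k=P_k/Q_k$ where $P_k$ and $Q_k$ are entire of order $<1$ normalized to $1$ at the origin, with zeros equal to the zeros and poles of $f_k$ respectively. The goal is to verify the displayed factorization formula
$$
f_1\star f_2=\frac{(P_1\star P_2).(Q_1\star Q_2)}{(P_1\star Q_2).(Q_1\star P_2)}
$$
and then read off the zero/pole description from it.

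First I would establish that each individual e\~ne-product appearing on the right-hand side, e.g. $P_1\star P_2$, is again an entire function of order $<1$ whose zeros are exactly the pairwise products of the zeros of the two factors. This is precisely the content of Theorem~2.9 (the product formula $f\star g(z)=\prod_{i,j}(1-z/(\a_i\b_j))=\prod_j f(z/\b_j)$), which is already asserted there to extend to entire functions of order $<1$ for each converging product. The order bound is the key quantitative point: if $P_1$ has zeros $(\a_i)$ and $P_2$ has zeros $(\b_j)$, then $\sum_{i,j}|\a_i\b_j|^{-s}=\bigl(\sum_i|\a_i|^{-s}\bigr)\bigl(\sum_j|\b_j|^{-s}\bigr)$ converges for any $s>1$ because each factor converges (order $<1$ means the exponent of convergence is $<1$, so both single sums converge at $s=1$ already). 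Hence the convergence exponent of the product set $(\a_i\b_j)$ is $\le 1$, in fact $<1$, so the Hadamard product $\prod_{i,j}(1-z/(\a_i\b_j))$ defines an entire function of order $<1$; this simultaneously justifies convergence of the product $\prod_j P_1(z/\b_j)$ and identifies its zeros.

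Next I would verify the algebraic identity itself. The clean way is to invoke distributivity (Theorem~\ref{thm:distributivity}) formally: writing $f_k=P_k.Q_k^{-1}$ in the group $(\cA,.)$ and using that $\star$ distributes over $.$ together with the inverse rule $f\star(1/g)=1/(f\star g)$ from the corollary, one gets
$$
f_1\star f_2=(P_1.Q_1^{-1})\star(P_2.Q_2^{-1})
=\frac{(P_1\star P_2).(Q_1\star Q_2)}{(P_1\star Q_2).(Q_1\star P_2)}
$$
as an identity of formal power series in $\cA_A$. Since $\star$ is defined coefficientwise by the universal polynomials $Q_n$ and $f_1,f_2$ lie in $\cA$, this formal identity holds unconditionally; the analytic content is only that both sides represent the claimed meromorphic function. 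Combining this with the zero-identification of the previous step, the four product-sets give the zeros of $f_1\star f_2$ as zeros-times-zeros together with poles-times-poles, and the poles as zero-times-pole in either order, matching the rational case and the ``zero $\star$ zero $=$ zero'' table.

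The main obstacle is the analytic bookkeeping of convergence and order, not the algebra: one must be careful that the Hadamard product over the double-indexed set $(\a_i\b_j)$ genuinely converges and has order $<1$, and that the three equivalent expressions $\prod_{i,j}(1-z/(\a_i\b_j))$, $\prod_j f(z/\b_j)$, $\prod_i g(z/\a_i)$ all agree and all converge locally uniformly. A subtle point is that the numerator and denominator factors may share common zeros (when a zero-times-zero product coincides with a pole-times-zero product), so the quotient could have removable cancellations; I would note that after cancellation the representation $f_1\star f_2=N/D$ with $N=(P_1\star P_2).(Q_1\star Q_2)$ and $D=(P_1\star Q_2).(Q_1\star P_2)$ still exhibits $f_1\star f_2$ as a quotient of two order-$<1$ entire functions, which is all that is claimed, and the net zero/pole divisor is obtained by the multiplicative convolution of the divisors of $f_1$ and $f_2$ exactly as in the rational case.
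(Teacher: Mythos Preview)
Your proposal is correct and follows the same line the paper takes: the paper gives no separate proof for this theorem, simply stating that ``the same formula as before'' holds (i.e.\ the distributivity computation from the rational case) and deferring the analytic content to the later section on entire functions and Hadamard--Weierstrass factorization. Your argument makes explicit the two ingredients the paper leaves implicit --- the formal distributivity identity $f_1\star f_2=\dfrac{(P_1\star P_2)(Q_1\star Q_2)}{(P_1\star Q_2)(Q_1\star P_2)}$ in $\cA$, and the order-$<1$ preservation via the factorization $\sum_{i,j}|\a_i\b_j|^{-s}=\bigl(\sum_i|\a_i|^{-s}\bigr)\bigl(\sum_j|\b_j|^{-s}\bigr)$ of the convergence-exponent sum --- and your remark on possible cancellations in the divisor is a nice bit of care that the paper does not mention.
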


As we prove in section \ref{sec:enti} this result extends to arbitrary entire functions.

We assume for the rest of this section that $\QQ\subset A$.
The e\~ne product by $\exp (-X/(1-X))$ has an interesting property.

\begin{theorem}\textbf{(Convolution formula).}\label{thm:convolution}
For $f\in \cA$
$$
e^{-{X \over 1-X}} \star f=e^{X\cD (f)} =\cD_{exp} (f)\ .
$$
In particular, if $f$ is an entire function of order $<1$
with zeros $(\a_i)$ (in particular when $f$ is a polynomial),
then the e\~ne-multiplication by the function $\exp(-z /(1-z) )$
creates a function with essential isolated singularities at the $\a_i$'s :
$$
e^{-{z \over 1-z}} \star f=\exp \left ( \sum_i {z \over z-\a_i}
\right ) \ .
$$
\end{theorem}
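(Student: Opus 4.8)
The plan is to apply the Main Formula (Theorem 3.1) with the specific choice $g = \exp(-X/(1-X))$, and to identify this particular $g$ as the element whose exponential coefficients are all equal to $1$. Recall from Theorem 4.1 (Exponential form) that if $f = e^F = \exp(\sum_{i\geq 1} F_i X^i)$ and $g = e^G = \exp(\sum_{i\geq 1} G_i X^i)$, then $f \star g = \exp(-\sum_{i\geq 1} i F_i G_i X^i)$. So first I would expand the exponent of $g$ as a power series: since
\[
-\frac{X}{1-X} = -\sum_{i=1}^{+\infty} X^i \ ,
\]
we read off $G_i = -1$ for every $i \geq 1$. Substituting into the exponential form gives
\[
f \star e^{-X/(1-X)} = \exp\left( -\sum_{i=1}^{+\infty} i F_i (-1) X^i \right) = \exp\left( \sum_{i=1}^{+\infty} i F_i X^i \right) \ .
\]

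The second step is to recognize the right-hand side as $e^{X\cD(f)}$. Writing $f = e^F$ with $F = \sum_{i\geq 1} F_i X^i$, we have by the factorization $D = \cD \circ \exp$ (so $\cD(f) = F'$) that $X\cD(f) = X F' = \sum_{i\geq 1} i F_i X^i$, which is exactly the exponent obtained above. Hence $f \star e^{-X/(1-X)} = e^{X\cD(f)}$, and by the definition of the exponential logarithmic derivative this equals $\cD_{\exp}(f)$. This establishes the first displayed identity.

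For the second statement, I would pass to the case where $f$ is a polynomial or an entire function of order $<1$ with zeros $(\a_i)$. In that setting Theorem 2.9 gives the product expansion $f(z) = \prod_i (1 - z/\a_i)$, so its logarithmic derivative is
\[
\cD(f)(z) = \sum_i \frac{-1/\a_i}{1 - z/\a_i} = \sum_i \frac{1}{z - \a_i} \ ,
\]
whence $z\,\cD(f)(z) = \sum_i \frac{z}{z - \a_i}$. Exponentiating and using the identity just proved yields the claimed formula
\[
e^{-z/(1-z)} \star f = \exp\left( \sum_i \frac{z}{z-\a_i} \right) \ .
\]
The qualitative remark about essential singularities at the $\a_i$ follows because each term $z/(z-\a_i) = 1 + \a_i/(z-\a_i)$ has a simple pole at $\a_i$, so the exponential of the sum has an isolated essential singularity there.

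I do not anticipate a serious obstacle; the argument is essentially a direct substitution into the exponential form. The one point requiring slight care is convergence in the analytic (entire, order $<1$) case: the partial-fraction expansion of $\cD(f)$ and the interchange of summation with exponentiation must be justified, but the order $<1$ hypothesis guarantees absolute convergence of $\sum_i 1/\a_i$ and hence of the relevant sums on compact subsets away from the $\a_i$, so the formal identity upgrades to a genuine identity of meromorphic-type functions with essential singularities. The purely formal statement over $\cA$ needs no such justification and follows immediately from the exponential form.
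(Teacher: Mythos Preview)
Your argument is correct, but it takes a different route from the paper's. You compute directly with the Exponential Form (Theorem~4.1): expand $-X/(1-X)=-\sum_{i\ge 1}X^i$ to read off $G_i=-1$, plug into $f\star g=\exp\bigl(-\sum_i iF_iG_iX^i\bigr)$, and then recognise $\sum_i iF_iX^i=XF'=X\cD(f)$. The paper instead uses the Main Formula (Theorem~3.1) in a single stroke: since $1-X$ is the e\~ne-unit, $e^{X\cD(f)}=e^{X\cD(f\star(1-X))}=f\star e^{X\cD(1-X)}=f\star e^{-X/(1-X)}$, the last step being the elementary computation $X\cD(1-X)=-X/(1-X)$. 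Your method is a straightforward coefficient computation; the paper's is a one-line conceptual observation that $e^{-X/(1-X)}=\cD_{\exp}(1-X)$ together with the Main Formula. Both are equally short, and yours has the minor advantage of making the structure of $\cD_{\exp}$ in exponential coordinates explicit. (One small slip: in your opening sentence you say the exponential coefficients of $g$ are ``all equal to $1$'', but as you correctly compute in the next line they are all $-1$.) Your treatment of the analytic consequence for entire functions of order $<1$ is also fine and slightly more detailed than the paper, which leaves that part implicit.
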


\begin{proof}

This results from the main formula. We have
\begin{align*}
e^{X\cD (f)} &= e^{X\cD (f\star (1-X) )} \\
&= e^{X \cD (1-X)} \star f\\
&=e^{-{X \over 1-X}} \star f 
\end{align*}
\end{proof}


In the next theorem we have a list of computations of
various e\~ne products.

\begin{theorem}\textbf{ (Some computations).}
We have

\begin{itemize}
\item   For $a,b \in A$,
$$
{1\over 1-aX} \star {1\over 1-bX}=(1-aX)\star (1-bX) =1-abX  \ .
$$

\item   Let $P\in A[X]$ and $f\in \cA$ then
$$
e^{XP(X)}\star f=e^{XQ(X)} \ ,
$$
where $Q(X) \in A[X]$ is a polynomial with $\deg Q \leq \deg P$.

\item  For $N\geq 1$ positive integer,
let $E_N(X)$ denote the Weierstrass factor
\begin{align*}
 E_N(X) &=(1-X) \exp \left ( X+{X^2 \over 2}+\ldots +
{X^N \over N} \right )\\ 
&=\exp \left ( -{X^{N+1} \over N+1}
-{X^{N+2} \over N+2} -\ldots \right )
 \end{align*}
For $f\in \cA$  we have
$$
E_N\star f =f . T^e_N (1/f) \ ,
$$
where $T_N^e$ is the exponential N-truncation operator, i.e.
$$
T_N^e \left (\exp \left (\sum_{i=1}^{+\infty } F_i X^i \right )\right )=\exp \left (
T_N \left (\sum_{i=1}^{+\infty } F_i X^i \right )\right )=
\exp \left (\sum_{i=1}^{N } F_i X^i \right ) \ .
$$


\item  For $N,M \geq 1$ positive integers, we have
$$
E_N \star E_M =E_{\max (N,M)} \ .
$$


\item  For $N\geq 1$ we define
$$
I_N(X)=1-X^N \ .
$$
For $f(X) \in \cA$, $f(X)=\exp (\sum_{i=1}^{+\infty} F_i X^i)$, we
have
$$
I_N \star f(X)= \exp \left (\sum_{k=1}^{+\infty} F_{Nk} X^{Nk} \right ) \ .
$$


\item  For $N,M\geq 1$,
$$
I_N \star I_M =I_{{\rm l.c.m.} (N,M)} \ .
$$


\item  For $a\in A$ we define
$$
(1+X)^a=\sum_{n=0}^{+\infty} {a(a-1)\ldots (a-n+1) \over n!} X^n \ .
$$
For $f,g \in \cA$ and $a\in A$,
$$
f(X)^a\star g(X)=f(X)\star g(X)^a =(f(X)\star g(X))^a \ .
$$


\item  Action of the Artin-Hasse exponential. We recall
that
$$
\exp (X) =\prod_{n=1}^{+\infty} (1-X^n)^{\mu(n)/n}
$$
where $\mu$ is the M\"oebius function.
For a prime $p$, the Artin-Hasse exponential is
\begin{align*} 
\exp_p(X) &=\prod_{n=1 ; n \not=k p} ^{+\infty}
(1-X^n)^{\mu(n)/n} \\ 
&=\exp \left ( X+\frac{X^p}{p}+\frac{X^{p^2}}{p^2} +\ldots \right ) 
 \end{align*}

If $f\in\cA$, $f(X)=\exp \left ( \sum_{i=1}^{+\infty} F_i X^i \right )$,
then
$$
\exp_p(X) \star f(X)=\exp \left (  -\sum_{k=1}^{+\infty} F_{p^k}
X^{p^k} \right ) \ .
$$
\end{itemize}
 
\end{theorem}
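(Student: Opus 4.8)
The plan is to reduce the whole list to the single coordinate computation furnished by the Exponential form theorem. Writing each series in the additive (exponential) coordinates $f=\exp(\sum_{i\ge1}F_iX^i)$ and $g=\exp(\sum_{i\ge1}G_iX^i)$, that theorem says $f\star g=\exp(-\sum_{i\ge1} iF_iG_iX^i)$; equivalently, in the $\star_e$ form, the product acts on coordinate sequences by the weighted coordinatewise rule $(F_i)\mapsto(-iF_iG_i)$. So for each item I would (i) compute the formal logarithm of every factor, (ii) form the weighted coordinatewise product, and (iii) recognise the resulting exponential as the claimed series. This turns each identity into a one-line check.

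For the elementary factors the logarithms are immediate. From $1-aX=\exp(-\sum_{i\ge1}\frac{a^i}{i}X^i)$ the rule gives $(1-aX)\star(1-bX)=\exp(-\sum_{i\ge1}\frac{(ab)^i}{i}X^i)=1-abX$, and the equality with $\frac1{1-aX}\star\frac1{1-bX}$ then follows from the inversion identity $\frac1f\star\frac1g=f\star g$. If $\deg P=d$ then the coordinates of $e^{XP(X)}$ are those of $XP(X)$ and vanish for $i>d+1$, so the exponent of $e^{XP}\star f$ is a finite sum $XQ(X)$ with $\deg Q\le d$. The Weierstrass factor telescopes to $E_N=\exp(-\sum_{i>N}\frac{X^i}{i})$, whose coordinates vanish for $i\le N$; hence $E_N\star f=\exp(\sum_{i>N}G_iX^i)$, which is exactly $f\cdot T^e_N(1/f)$, and taking $g=E_M$ gives $E_N\star E_M=E_{\max(N,M)}$. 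The power item is purely formal: $f^a=\exp(aF)$ has coordinates $aF_i$, so $-\sum_i i(aF_i)G_i=a(-\sum_i iF_iG_i)$ and $f^a\star g=(f\star g)^a$.

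For the arithmetic factors the only work is the logarithm. Since $I_N=1-X^N=\exp(-\sum_{j\ge1}\frac{X^{Nj}}{j})$, its coordinates are supported exactly on the multiples of $N$; the weighted product therefore keeps only the coordinates of $f$ indexed by multiples of $N$, and $I_N\star I_M$ keeps the coordinates indexed by common multiples of $N$ and $M$, i.e. by the multiples of $\mathrm{lcm}(N,M)$. For the Artin--Hasse factor I would use the exponential expansion recalled in the statement, $\exp_p(X)=\exp(\sum_{k\ge0}\frac{X^{p^k}}{p^k})$ (itself a consequence of M\"obius inversion applied to the cyclotomic product $\exp(X)=\prod_n(1-X^n)^{\pm\mu(n)/n}$, via $\sum_{n\mid m}\mu(n)=0$ for $m>1$), whose coordinates are supported exactly on the powers of $p$; the weighted product then selects precisely the coordinates $F_{p^k}$ of $f$, giving the stated series.

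Conceptually there is no obstacle once the Exponential form theorem is available, since each item is a one-line verification. The point that demands genuine care, and the only realistic source of error, is the weight factor $i$ in $-\sum_i iF_iG_iX^i$: this integer is exactly what cancels or creates the denominators buried in the logarithms above. Thus the coordinate of $I_N$ at $i=Nk$ is $-N/i=-1/k$, so the weight $i=Nk$ reintroduces a factor $N$ (and for $I_N\star I_M$ the two weights combine to produce the $\gcd(N,M)$ governing the exact power of $I_{\mathrm{lcm}(N,M)}$); while for $\exp_p$ the coordinate $p^{-k}$ at $i=p^k$ is cancelled by the weight $p^k$, leaving the clean coefficient $-F_{p^k}$. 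Keeping each coordinate paired with its weight throughout is the essential bookkeeping that fixes every constant in the final forms.
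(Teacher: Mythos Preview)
Your approach---compute everything in exponential coordinates via the Exponential form theorem---is exactly the paper's; the paper in fact proves only $E_N\star E_M=E_{\max(N,M)}$ explicitly and leaves the rest as exercises. For that one item the paper uses the factored form $E_N=(1-X)\exp\bigl(\sum_{i\le N}X^i/i\bigr)$ together with distributivity into four $\star$-factors, rather than your direct computation in the telescoped coordinates, but the two routes are equivalent and yours is slightly cleaner.

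Your bookkeeping in the last paragraph is correct and actually catches misprints in the stated formulas. The weight $i=Nk$ against the coordinate $-1/k$ of $I_N$ gives
\[
I_N\star f=\exp\Bigl(N\sum_{k\ge1}F_{Nk}X^{Nk}\Bigr),
\qquad
I_N\star I_M=I_{\mathrm{lcm}(N,M)}^{\gcd(N,M)},
\]
just as you indicate; the printed versions omit the factor $N$ and the exponent $\gcd(N,M)$. (Sanity check: $1-X$ is the $\star$-unit, so $I_2\star(1-X)=I_2$, whereas the printed formula would yield $(1-X^2)^{1/2}$.) Likewise the Artin--Hasse sum should start at $k=0$, since $\exp_p$ has a nonzero coordinate at $i=p^0=1$. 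One small slip: in the $E_N\star f$ line you wrote $G_i$ for the coordinates of $f$ where you meant $F_i$.
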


\begin{proof}

We prove one of the formulas and left the others as exercices.
We have
\begin{align*}
E_N \star E_M &= \exp \left ( -{X^{N+1} \over N+1}
-{X^{N+2} \over N+2} -\ldots \right ) \star \exp \left ( -{X^{M+1} \over M+1}
-{X^{M+2} \over M+2} -\ldots \right )\\
&=\exp \left ( -{X^{\max(N, M)+1} \over \max(N, M)+1}
-{X^{\max(N, M)+2} \over \max(N, M)+2} -\ldots \right )\\
&=E_{\max (N,M)}  \ .
\end{align*}

\end{proof}

In view of the action of the action by e\~ne product of $I_n (X)=1-X^n$,
it is natural to define, in parallel with the theory of modular forms,
the following Hecke operators.

\begin{definition}\textbf{(Hecke operators).}
For $n\geq 1$ we define,
$$
T(n) : \cA \to \cA
$$
by
$$
T(n) (f) (X)=(I_n \star f) (X^{1/n}) \ ,
$$
that is, if $f(X)=\exp \left ( \sum_{i=1}^{+\infty} F_i X^i \right )$,
$$
T(n) (f) =\exp \left ( \sum_{k=1}^{+\infty} F_{nk} X^k \right ) \ .
$$
Note that $T(n)$ can be defined in the same way on $1+A[[X^{1/\lambda}]]$
for $\lambda \in \CC^*$.
\end{definition}

We can also define the "dilatation operators" by

\begin{definition}\textbf{(Dilatation operators).} For $\lambda \in \CC^*$
we define,
$$
R_\lambda : \cA \to 1+A[[X^{ 1/\lambda }]]
$$
by
$$
R_\lambda (f) (X)=f (X^{1/\lambda}) \ .
$$
Note that $R_\lambda$ is defined in the same way
on $1+A[[X^{1/\mu}]]$ for any
$\mu \in \CC^*$.
\end{definition}

We observe that $T(n)$ factors.

\begin{theorem}
We have
$$
T(n) (f)=R_{n} (I_n\star f) \ .
$$
\end{theorem}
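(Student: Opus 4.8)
The plan is to prove the identity by simply unwinding the two definitions, since both sides describe the same operation carried out in two stages. First I would recall that the dilatation operator is defined by $R_\lambda(g)(X) = g(X^{1/\lambda})$, so that taking $\lambda = n$ and $g = I_n \star f$ gives $R_n(I_n \star f)(X) = (I_n \star f)(X^{1/n})$. This is verbatim the right-hand side of the defining equation for the Hecke operator, namely $T(n)(f)(X) = (I_n \star f)(X^{1/n})$. Hence the two operators agree as maps $\cA \to 1 + A[[X^{1/n}]]$, and the statement follows at once.

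As an independent consistency check I would re-derive the same equality through the explicit exponential-coefficient formulas. Writing $f = \exp\left(\sum_{i \geq 1} F_i X^i\right)$, the formula for $I_N \star f$ established in the theorem on some computations specializes to $I_n \star f = \exp\left(\sum_{k \geq 1} F_{nk} X^{nk}\right)$. Applying $R_n$, which substitutes $X^{1/n}$ for $X$, sends each $X^{nk}$ to $X^k$ and therefore yields $\exp\left(\sum_{k \geq 1} F_{nk} X^k\right)$. This is precisely the closed form recorded for $T(n)(f)$ in its definition, confirming the factorization from both the functional and the coefficient viewpoint.

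There is no genuine obstacle here: the content of the theorem is essentially definitional, recording that the rescaling $X \mapsto X^{1/n}$ built into the definition of $T(n)$ is exactly the dilatation $R_n$. The only point deserving care is tracking the exponent $1/n$ correctly through the substitution, so that the compression in the exponents produced by e\~ne-multiplication with $I_n$ (which retains only the terms of degree divisible by $n$) is matched precisely by the $n$-fold rescaling effected by $R_n$.
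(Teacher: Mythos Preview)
Your proposal is correct and matches the paper's treatment: the theorem is stated there without proof because it is immediate from the definitions of $T(n)$ and $R_\lambda$, exactly as you unwind them. The additional exponential-coefficient check you provide is a harmless redundancy but not needed.
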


Note that extending properly the e\~ne product to
$1+A[[X^{1/\lambda}]]$ we have commutation of $R_\lambda$ and
e\~ne multiplication by $I_n$, thus we can also write
$$
T(n) (f) =I_n \star (R_{1/n} (f)) \ .
$$
Then we have, similar (and simpler) formulas than in the theory
of modular forms (see \cite{S} p.159
for example),

\begin{theorem}
We have
\begin{itemize}
\item  For $\lambda , \mu \in \CC^*$,
$$
R_\lambda R_\mu =R_{\lambda \mu}
$$

\item  For $n\geq 1$ and $\lambda \in \CC^*$,
$$
R_{\lambda } T(n) =T(n) R_{\lambda} \ .
$$

\item  For $n,m \geq 1$, $n\wedge m=1$,
$$
T(n) \ T(m) =T(nm) \ .
$$
\end{itemize}

 \end{theorem}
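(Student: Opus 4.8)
The plan is to verify each of the three identities by passing to the exponential form of elements of $\cA$, where every operator in sight acts in a transparent way on the exponential coefficients $(F_i)$. Throughout I write $f=\exp\left(\sum_{i=1}^{+\infty} F_i X^i\right)$, and I recall the explicit descriptions established earlier: the dilatation operator satisfies $R_\lambda(f)(X)=f(X^{1/\lambda})$, so on exponential coefficients it sends the term $F_i X^i$ to $F_i X^{i/\lambda}$; and the Hecke operator is given by the closed form $T(n)(f)=\exp\left(\sum_{k=1}^{+\infty} F_{nk} X^k\right)$, so it selects the coefficients indexed by multiples of $n$ and re-indexes them.

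First I would dispatch $R_\lambda R_\mu=R_{\lambda\mu}$, which is immediate from the definition: $R_\lambda R_\mu(f)(X)=R_\lambda\left(f(X^{1/\mu})\right)=f\left((X^{1/\lambda})^{1/\mu}\right)=f(X^{1/(\lambda\mu)})=R_{\lambda\mu}(f)(X)$. Next, for the commutation $R_\lambda T(n)=T(n) R_\lambda$, I would compute both composites on the exponential coefficients. Applying $T(n)$ first extracts the subsequence $(F_{nk})_k$ placed at exponents $k$, and then $R_\lambda$ rescales the exponent $k$ to $k/\lambda$; applying $R_\lambda$ first rescales exponent $i$ to $i/\lambda$, after which $T(n)$ selects the terms whose original index is a multiple of $n$, again yielding $F_{nk}$ at the rescaled exponent $k/\lambda$. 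Since both orders produce $\exp\left(\sum_{k=1}^{+\infty} F_{nk} X^{k/\lambda}\right)$, the two operators commute. Here it is cleanest to use the factorization $T(n)(f)=R_n(I_n\star f)$ together with $R_\lambda R_\mu=R_{\lambda\mu}$ and the fact that $R_\lambda$ commutes with e\~ne-multiplication by $I_n$, reducing everything to the already-proven relation $R_\lambda R_n=R_n R_\lambda$.

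For the multiplicativity $T(n)T(m)=T(nm)$ under the coprimality hypothesis $n\wedge m=1$, I would again work on exponential coefficients. Applying $T(m)$ to $f$ gives $\exp\left(\sum_{k} F_{mk} X^k\right)$, whose $k$-th exponential coefficient is $F_{mk}$; applying $T(n)$ then selects the coefficients indexed by multiples of $n$, producing the $j$-th coefficient $F_{mnj}$, so $T(n)T(m)(f)=\exp\left(\sum_{j} F_{nmj} X^j\right)=T(nm)(f)$. The main point to watch is that this computation goes through even without coprimality at the purely formal level, because the index arithmetic $(mk)$-then-select-multiples-of-$n$ composes to $nm$-indexing regardless; the coprimality hypothesis is the condition under which this matches the classical Hecke normalization and is the natural analogue of property (3) of the earlier theorem, where the isomorphism $\mathbb U_k\times\mathbb U_l\to\mathbb U_{kl}$ for $k\wedge l=1$ played the analogous role. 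I expect no serious obstacle in any of the three parts: the entire difficulty has already been absorbed into the exponential-form linearization of $\star$, so the remaining work is a direct bookkeeping of how each operator acts on the coefficient sequence $(F_i)$, and the only care required is consistent index tracking through the compositions.
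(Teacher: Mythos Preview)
Your proof is correct. The paper states this theorem without proof, so there is nothing to compare against; your approach of reading off each identity from the explicit action on exponential coefficients $(F_i)$ is the natural one given the machinery already developed, and your observation that $T(n)T(m)=T(nm)$ in fact holds for all $n,m\geq 1$ (not just coprime ones), since the index map $j\mapsto mnj$ composes regardless, is accurate.
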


\section{E\~ne ring structure for a field $A$.} \label{sec:ring}

We assume in this section that $\QQ\subset A$.
The e\~ne product of polynomials $P$ and $Q$ of respective
degrees $d_1$ and $d_2$ is  the polynomial $P\star Q$ 
of degree $d_1d_2$ (because the roots of $P \star Q$ counted
with multipicity are the products of a root of $P$ with a root of 
$Q$). Thus the e\~ne product does not respect the graduation by
degrees. But in exponential form it does. More precisely,
for $N\geq 1$, let $\cA_N \subset \cA$ be the subset of $\cA$
$$
\cA_N=\{ f \in \cA ; \exists P\in X A[X], \ \ \deg (P)\leq N, \ \ 
f=\exp (P) \} \ .
$$
Observe that the exponential truncation $T_N^e$ defines a surjective ring
homomorphism $T_N^e : \cA \to \cA_N$
$$
f \mapsto T^e_N (f)
$$
We denote $\cI_N$ its kernel, thus
$$
\cA_N \approx \cA/\cI_N \ .
$$
Obviously the inclusions $ \cA_N \hookrightarrow \cA_{N+1}$ are ring homomorphisms.

\begin{theorem}
The subset $\cA_N \subset \cA$ is a 
subring of the e\~ne ring $(\cA , . ,\star )$
with unit $(1-X)/E_N(X)$ and zero $E_N(X)$. Morevoer,
$\cA$ is the direct limit of the $\cA_N$'s
$$
\cA =\lim_{\longrightarrow } \cA_N \ .
$$
\end{theorem}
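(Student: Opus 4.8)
The plan is to pass to exponential coordinates, where every assertion becomes transparent, and then translate back. Writing $f=\exp F$ and $g=\exp G$ with $F=\sum_{i\ge 1}F_iX^i$, $G=\sum_{i\ge 1}G_iX^i$, the Exponential form theorem gives $(F\star_e G)_i=-iF_iG_i$, while the group law $.$ is addition of the coefficients. Setting $u_i(f)=-iF_i$ (legitimate since $\QQ\subset A$) turns both operations into the coordinatewise ones, $u_i(f.g)=u_i(f)+u_i(g)$ and $u_i(f\star g)=u_i(f)u_i(g)$. Hence $f\mapsto (u_i(f))_{i\ge 1}$ is a ring isomorphism $(\cA,.,\star)\isom\prod_{i\ge 1}(A,+,\cdot)$, carrying the zero $1$ to $(0,0,\dots)$ and the unit $1-X$ to $(1,1,\dots)$. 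First I would set up exactly this dictionary, since it reduces the theorem to an elementary statement about the product ring.

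Under this isomorphism $\cA_N=\{f:F_i=0\text{ for }i>N\}$ corresponds to $\{(u_i):u_i=0\text{ for }i>N\}$, i.e. to the factor $A^N\times\{0\}$ of the product. This is visibly closed under both operations, so $\cA_N$ is a subring; as a ring in its own right its $\star$-unit is $(1,\dots,1,0,\dots)$ and its neutral element for $.$ is $(0,0,\dots)$. Translating back, the $\star$-unit is the element with $F_i=-1/i$ for $i\le N$ and $F_i=0$ otherwise, namely $\exp(-\sum_{i=1}^N X^i/i)=(1-X)/E_N$, and the additive neutral element is the constant series $1$. Staying closer to the notation already introduced, the same conclusions follow intrinsically: $\cA_N=\im T_N^e$ and $T_N^e$ is a ring homomorphism, so the image is a subring whose unit is $T_N^e(1-X)$, which equals $(1-X)/E_N$ because $1-X=\exp(-\sum_{i\ge1}X^i/i)$ and $E_N=\exp(-\sum_{i>N}X^i/i)$.

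To clarify the role of $E_N$ I would record the orthogonal idempotent decomposition behind the product splitting. One checks that $\star$-multiplication by the idempotent $(1-X)/E_N$ reproduces the truncation itself, $\tfrac{1-X}{E_N}\star f=T_N^e(f)$, while $E_N$ is the complementary $\star$-idempotent, consistent with $E_N\star E_N=E_N$; moreover $E_N\star\tfrac{1-X}{E_N}=1$ and $1-X=E_N.\tfrac{1-X}{E_N}$. Thus $\cA\iso\cA_N\times\cI_N$ as rings, and $E_N$ is precisely the unit of the complementary factor $\cI_N$ (the projector onto the tail coordinates $i>N$). This is what explains its appearance in the statement: the additive neutral element of $\cA_N$ itself is $1$, whereas $E_N$ is the idempotent carrying the complementary summand.

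For the limit I would argue as follows, and this is where I expect the main obstacle. The inclusions $\cA_N\inc\cA_{N+1}$ respect $.$ and $\star$, but they do not carry $(1-X)/E_N$ to $(1-X)/E_{N+1}$, so the naive colimit along them is only the union $\bigcup_N\cA_N$, the exponentials of genuine polynomials, which corresponds to the direct sum $\bigoplus_{i\ge1}A$ inside $\prod_{i\ge1}A$; this is $X$-adically dense but proper in $\cA$. The clean identification is the dual one: the truncations $T_M^e:\cA_N\to\cA_M$ for $M\le N$ form an inverse system of surjective ring homomorphisms, and since $\bigcap_N\cI_N=(1)$ and $A[[X]]$ is $X$-adically complete, a compatible family of truncations is the same datum as a single element of $\cA$, giving $\cA=\lim_{\longleftarrow}\cA_N$. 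The obstacle to a literal reading of $\cA=\lim_{\longrightarrow}\cA_N$ is exactly that in $\prod_{i\ge1}A$ the correct limit of the factors $A^N$ is the product (inverse limit), not the direct sum (colimit); so the equality must be understood either as this inverse limit or as the $X$-adic completion of the colimit.
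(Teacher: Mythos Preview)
Your approach via exponential coordinates is exactly the paper's: its proof invokes the exponential form to see that $\star$ preserves $\cA_N$, computes $(1-X)/E_N=\exp(-\sum_{i=1}^N X^i/i)$ to identify the unit, writes $E_N=\exp(\cO(X^{N+1}))$ for the ``zero'', and then dismisses the limit with a one-line ``The direct limit is clear.'' So methodologically you are aligned with the paper, only more careful.

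Your two caveats are sharper than the paper and worth keeping. First, you are right that $E_N\notin\cA_N$ (its exponential series begins in degree $N+1$), so it cannot literally be the zero of the \emph{sub}ring; the neutral element for $.$ in $\cA_N$ is $1$. The paper is tacitly using the quotient description $\cA_N\approx\cA/\cI_N$, where $E_N\in\cI_N$ represents the zero class; your idempotent reading ($E_N$ as the $\star$-projector onto the complementary factor $\cI_N$, with $(1-X)/E_N$ the projector onto $\cA_N$) is the clean way to reconcile this with the subring language. Second, your objection to the direct limit is correct: along the inclusions $\cA_N\hookrightarrow\cA_{N+1}$ the colimit is $\bigcup_N\cA_N$, the exponentials of genuine polynomials, which is only $X$-adically dense in $\cA$. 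The honest identification is $\cA=\lim_{\longleftarrow}\cA_N$ via the truncations $T_N^e$, or equivalently the $X$-adic completion of the colimit, exactly as you say. The paper does not address this; your analysis fills the gap.
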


\begin{proof}

The e\~ne product preserves each $\cA_N$ as is immediate
from its simple exponential form. Also $(1-X)/E_N(X)$ is the unit
since 
$$
(1-X)/E_N(X)=\exp \left (-X-{X^2\over 2}-\ldots -{X^N \over N} \right ) \ .
$$
Also $E_N(X)$ is the zero since
$$
E_N(X)=\exp (\cO (X^{N+1})) \ .
$$
The direct limit is clear.
\end{proof}


When $A$ is a field the following theorem gives a description of 
the ideals of the e\~ne-ring.

\begin{theorem}
We assume that $A$ is a field. The maximal ideals of 
$\cA$ 
are 
$$
\cJ_n =\{ f\in \cA; f(X)=\exp (F_1 X+F_2 X^2 +\ldots ) \ \ 
{\text{such that  }} F_n=0 \} \ .
$$

In particular, $\cA_N$ is a quasi-local ring, i.e. it has a finite
number of maximal ideals.
 \end{theorem}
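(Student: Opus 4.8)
The plan is to push the entire question through the exponential-form isomorphism of the previous section, under which $(\cA,.,\star)$ becomes a transparent coordinatewise product ring. Writing $f=\exp\left(\sum_{i\ge 1}F_iX^i\right)$, the multiplication $.$ adds the exponential coefficients while $\star_e$ multiplies them with the weights $-i$. Since $\QQ\subset A$ each integer $i$ is invertible, so I may set $\phi_i(f)=-iF_i$; a one-line check gives $\phi_i(f.g)=\phi_i(f)+\phi_i(g)$ and $\phi_i(f\star g)=i^2F_iG_i=\phi_i(f)\phi_i(g)$, together with $\phi_i(1)=0$ and, using $1-X=\exp\left(-\sum_i\frac1i X^i\right)$, $\phi_i(1-X)=1$. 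Hence $\Phi=(\phi_i)_{i\ge1}$ is a ring isomorphism
$$
\Phi:(\cA,.,\star)\isom \prod_{i=1}^{\infty}A,
$$
carrying the zero $1$ to $\mathbf 0$ and the unit $1-X$ to $\mathbf 1=(1,1,\dots)$.

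With this identification the theorem becomes a statement about the maximal ideals of the product ring $\prod_{i\ge1}A$ of the field $A$. One inclusion is immediate: $\cJ_n=\{f:F_n=0\}=\ker(\pi_n\circ\Phi)$, where $\pi_n$ is the $n$-th projection, and since $\pi_n\circ\Phi:\cA\to A$ is a surjective homomorphism onto a field, its kernel $\cJ_n$ is a maximal ideal. This already exhibits each displayed $\cJ_n$ as maximal.

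The substantive direction is the converse, and I would settle it completely at the finite level, where quasi-locality also lives. Under $\Phi$ the truncation $\cA_N$ is identified with the finite product $\prod_{i=1}^NA$, and $e_i=\exp\left(-\frac1i X^i\right)$ maps to the standard idempotent $\epsilon_i$ supported in slot $i$; indeed, by the corollary on $e^{aX^n}\star e^{bX^m}$, one has $e_i\star e_i=e_i$, $e_i\star e_j=1$ (the ring zero) for $i\neq j$, and $e_1.\cdots.e_N=(1-X)/E_N(X)$, the unit of $\cA_N$. Now let $\frm$ be a prime ideal of $\prod_{i=1}^NA$. From $\epsilon_i(\mathbf1-\epsilon_i)=\mathbf0$ it contains $\epsilon_i$ or $\mathbf1-\epsilon_i$ for each $i$; it cannot contain all the $\epsilon_i$, since $\sum_i\epsilon_i=\mathbf1$, and from $\epsilon_i\epsilon_j=\mathbf0$ for $i\neq j$ and primeness it cannot leave two of them outside. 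Hence exactly one $\epsilon_n$ lies outside $\frm$, which forces $\frm\supseteq\ker\pi_n$ and therefore $\frm=\ker\pi_n$ by maximality. This yields precisely $N$ maximal ideals in $\cA_N$, proving quasi-locality.

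The main obstacle is the same converse for the full ring $\cA\cong\prod_{i\ge1}A$: the finite argument collapses because the $\epsilon_i$ no longer $.$-sum to the identity, and an infinite product of fields genuinely carries extra maximal ideals. Concretely, the finitely supported sequences form a proper ideal $\bigoplus_{i\ge1}A$, which is contained in some maximal ideal that contains every $\epsilon_i$ and is therefore none of the $\cJ_n$. The clean statement is thus that the maximal ideals of $\cA$ are in bijection with the ultrafilters on $\NN$, the $\cJ_n$ being exactly the principal ones; I would prove the complete and constructive result for each $\cA_N$ and for these principal maximal ideals of $\cA$, and flag the appearance of free ultrafilters as the precise reason the naive converse fails for the infinite ring.
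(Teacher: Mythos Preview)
Your analysis is sharper than the paper's own argument. The paper's proof is extremely brief: it defines $\NN_{\cJ}=\{n\ge 1 : \text{some } f\in\cJ \text{ has } F_n\neq 0\}$ and records the containment $\cJ\subset\bigcap_{n\notin\NN_{\cJ}}\cJ_n$, stopping there. This is in spirit the same mechanism as your idempotent argument (if $n\notin\NN_{\cJ}$ then $\cJ\subset\cJ_n$, so a maximal $\cJ$ would equal some $\cJ_n$), and it suffices for the finite quotients $\cA_N$, where any proper ideal must have $\NN_{\cJ}\subsetneq\{1,\dots,N\}$. But the paper does not treat the case $\NN_{\cJ}=\NN$ for the full ring $\cA$.

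Your objection is mathematically correct: under the isomorphism $\cA\cong\prod_{i\ge 1}A$, the ideal $\bigoplus_{i\ge 1}A$ of finitely supported sequences is proper, has $\NN_{\cJ}=\NN$, and any maximal ideal containing it is distinct from every $\cJ_n$. So the theorem, read as a complete list of maximal ideals of $\cA$, overstates what holds; the paper's proof does not address this and cannot be completed to do so without restricting the class of ideals. Your account --- each $\cJ_n$ maximal via the surjection onto the field $A$, the full classification for $\cA_N$ via orthogonal idempotents summing to the unit, and the free-ultrafilter ideals as the precise obstruction at infinite level --- is the honest one, and it subsumes what the paper actually establishes.
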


\begin{proof}

Given an ideal $\cJ \subset \cA$, we consider the set of 
integers
$$
\NN_{\cJ}=\{ n\geq 1 ; \exists f(X)=\exp \left (\sum_{i\geq 1} F_i X^i \right ) 
\in \cJ\ ,
\ \ {\hbox {\rm {with}}} \ \ F_n \not=0 \}
$$
If $\NN_{\cJ}=\emptyset $ then $\cJ =\{ 1 \}$. Otherwise we have

$$
\cJ\subset \bigcap_{n\notin \NN_{\cJ}} \cJ_n
$$
 \end{proof}

\section{E\~ne product and tensor product.} \label{sec:tens}

We assume in this section that $\QQ\subset A$.
We have the well known formal relation for $M\in M_n(A)$,
$$
\det (I-MX)=\exp \left ( -\sum_{k=1}^{+\infty}
{\hbox {\rm Tr}} (M^k) {X^k \over k} \right ) \ .
$$
We recall that given $M,N \in M_n(A)$ the tensor product
$M\otimes N \in M_{n^2}(A)$ is defined by
$$
(M\otimes N) (x\otimes y)=(Mx)\otimes (Ny)\ .
$$
In terms of the coefficients of the matrices
$$
(M\otimes N)_{(j,k) (i,l)} =M_{ij} N_{kl} \ .
$$
Thus, in particular, we have
$$
{\hbox {\rm Tr}} (M\otimes N)={\hbox {\rm Tr}} (M) . {\hbox {\rm Tr}} (N) \ .
$$
From these observations and the exponential form of the e\~ne product
we get the following Theorem:

\begin{theorem}\label{thm:6.1}
For $M,N \in M_n(A)$ we have
$$
\det (I-M X) \star \det (I-N X)=\det (I-(M\otimes N) X) \ .
$$
\end{theorem}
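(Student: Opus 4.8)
The plan is to reduce everything to the exponential form of the e\~ne product, where the statement becomes a transparent computation with traces.

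First I would invoke universality exactly as in Theorem \ref{thm:distributivity}: each coefficient of both sides is a polynomial with integer coefficients in the entries $M_{ij}$ and $N_{kl}$ (on the left because the $Q_n$ have integer coefficients and the coefficients of $\det(I-MX)$ are integer polynomials in the $M_{ij}$; on the right because it is the determinant of a matrix with such entries). Hence it suffices to establish the identity when $A=\CC$, where the exponential and the logarithmic quantities used below are available.

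Next I would put both factors in exponential form using the stated trace formula $\det(I-PX)=\exp(-\sum_k \operatorname{Tr}(P^k)X^k/k)$. Writing $f=\det(I-MX)=e^F$ and $g=\det(I-NX)=e^G$, we read off
$$
F_i=-\frac{\operatorname{Tr}(M^i)}{i}, \qquad G_i=-\frac{\operatorname{Tr}(N^i)}{i}\ .
$$
By the Exponential form theorem the coefficient of $X^i$ in $F\star_e G$ is $-iF_iG_i$, so
$$
f\star g=\exp\left(-\sum_{i=1}^{+\infty} iF_iG_i\,X^i\right)=\exp\left(-\sum_{i=1}^{+\infty}\frac{\operatorname{Tr}(M^i)\operatorname{Tr}(N^i)}{i}\,X^i\right)\ .
$$

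Finally I would identify this exponent with the one attached to $M\ox N$. The key algebraic fact is that the tensor product is multiplicative, $(M\ox N)^i=M^i\ox N^i$, which combined with the stated identity $\operatorname{Tr}(P\ox Q)=\operatorname{Tr}(P)\operatorname{Tr}(Q)$ gives $\operatorname{Tr}(M^i)\operatorname{Tr}(N^i)=\operatorname{Tr}((M\ox N)^i)$. Substituting into the previous display and applying the trace formula once more, now to the $n^2\x n^2$ matrix $M\ox N$, yields
$$
f\star g=\exp\left(-\sum_{i=1}^{+\infty}\operatorname{Tr}((M\ox N)^i)\,\frac{X^i}{i}\right)=\det(I-(M\ox N)X)\ ,
$$
as desired. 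There is no serious obstacle here: the only points that require care are the bookkeeping of the factors $1/i$ and the signs produced by the exponential form, and the verification of $(M\ox N)^i=M^i\ox N^i$, which follows by an immediate induction from $(M\ox N)(M'\ox N')=(MM')\ox(NN')$.
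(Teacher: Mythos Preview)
Your proof is correct and follows exactly the route the paper intends: the paper does not spell out a proof but simply states the trace formula and the identity $\operatorname{Tr}(M\otimes N)=\operatorname{Tr}(M)\operatorname{Tr}(N)$, then says the theorem follows ``from these observations and the exponential form of the e\~ne-product,'' which is precisely the computation you carry out. Your preliminary universality reduction to $A=\CC$ is a small but welcome addition, since it makes the statement valid for an arbitrary commutative ring rather than only under the standing hypothesis $\QQ\subset A$ needed for the exponential.
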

%

This last result provides a linear algebra procedure to compute
the e\~ne product. Notice that if $P(X)=1+a_1 X+a_2X^2+\ldots +a_dX^d$
we have
$$
P(X)=\det \left [
\begin{array}{cccccc}
1 & 0 & 0 & \cdots & 0 & (-1)^{d-1} a_d X \\
        -X & 1 & 0 & \cdots & 0 & (-1)^{d-2} a_{d-1} X \\
         0 & -X & 1& \cdots & 0 & (-1)^{d-3} a_{d-2} X \\
    \vdots &    & \ddots &\ddots &\vdots &\vdots \\
    \vdots &    &        & \ddots & 1 &-a_2 X \\
         0 & \cdots & \cdots &  0   & -X & 1+a_1X 
\end{array}
\right ] =\det (I-M_P X)
$$

where
$$
M_P=\left [
\begin{array}{cccccc}
0 & 0 & 0 & \cdots & 0 & (-1)^{d} a_d  \\
        1 & 0 & 0 & \cdots & 0 & (-1)^{d-1} a_{d-1}  \\
         0 & 1 & 0& \cdots & 0 & (-1)^{d-2} a_{d-2}  \\
    \vdots &    & \ddots &\ddots &\vdots &\vdots \\
    \vdots &    &        & \ddots & 0 &a_2  \\
         0 & \cdots & \cdots &  0   & 1 & -a_1 
\end{array}
\right ]
$$

Thus we get:

\begin{theorem}\label{thm:6.2}
We have
$$
P (X)\star Q (X) =\det (I-(M_P \otimes M_Q)X) \ .
$$
\end{theorem}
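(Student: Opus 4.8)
The plan is to deduce the statement immediately from Theorem \ref{thm:6.1} by specializing the two matrices $M$ and $N$ to the companion matrices $M_P$ and $M_Q$ displayed just above the statement. The key observation is that the determinantal identity preceding the theorem already establishes $P(X)=\det(I-M_P X)$ and $Q(X)=\det(I-M_Q X)$, so essentially all the content is encapsulated in the tensor-product formula of Theorem \ref{thm:6.1}, and the only remaining task is to feed these two representations into it.

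Concretely, the steps I would carry out are as follows. First, I record from the companion-matrix computation that $P(X)=\det(I-M_P X)$ and $Q(X)=\det(I-M_Q X)$. Second, I apply Theorem \ref{thm:6.1} with $M=M_P$ and $N=M_Q$ to obtain
$$
P(X)\star Q(X)=\det(I-M_P X)\star \det(I-M_Q X)=\det(I-(M_P\otimes M_Q)X),
$$
which is exactly the claimed identity. As a sanity check on degrees, $M_P\otimes M_Q$ has size $d_1 d_2$, matching $\deg(P\star Q)=d_1 d_2$.

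The one point that needs care, and the main obstacle, is that Theorem \ref{thm:6.1} is stated for two matrices of the same size $n$, whereas here $M_P$ has size $d_1=\deg P$ and $M_Q$ has size $d_2=\deg Q$, which need not be equal. I would therefore remark that Theorem \ref{thm:6.1} holds verbatim for matrices of arbitrary, possibly different, sizes: its proof proceeds through the exponential form and uses only the two facts $(M\otimes N)^k=M^k\otimes N^k$ and $\mathrm{Tr}(M^k\otimes N^k)=\mathrm{Tr}(M^k)\,\mathrm{Tr}(N^k)$, neither of which requires $M$ and $N$ to be square of the same dimension. Writing the exponential coefficients of $\det(I-MX)$ as $F_k=-\mathrm{Tr}(M^k)/k$ and those of $\det(I-NX)$ as $G_k=-\mathrm{Tr}(N^k)/k$, the $\star_e$-formula gives $-k F_k G_k=-\mathrm{Tr}(M^k)\,\mathrm{Tr}(N^k)/k=-\mathrm{Tr}((M\otimes N)^k)/k$, which are precisely the exponential coefficients of $\det(I-(M\otimes N)X)$. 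With this mild extension in hand, the specialization to companion matrices is immediate and no further computation is required.
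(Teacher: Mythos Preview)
Your proposal is correct and follows exactly the paper's intended route: the paper offers no separate proof of Theorem~\ref{thm:6.2} beyond the phrase ``Thus we get'', deducing it immediately from Theorem~\ref{thm:6.1} applied to the companion matrices $M_P$ and $M_Q$ displayed just before. Your additional remark about the dimension mismatch (Theorem~\ref{thm:6.1} is stated for $M,N\in M_n(A)$ of the same size, whereas $M_P$ and $M_Q$ generally have different sizes) is a genuine point the paper glosses over, and your resolution via the trace and tensor-power identities is correct and worth including.
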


Notice that the extension of the e\~ne product to formal power series
indicates that theorem \ref{thm:6.2} remains valid for infinite matrices.
Also theorem \ref{thm:6.1} makes sense for infinite matrices once the tensor
product and the determinant are properly defined (one can also
define the infinite determinant the other way around).

\section{Analytic properties of the e\~ne product.} \label{sec:anal}

The e\~ne product satisfies remarkable analytic properties.
We assume in this section that $A\subset \CC$ and we study
the convergence properties of series.
Recall Hadamard formula for the radius of convergence of
$f\in \cA$, $f(z)=1+\sum_{i=1}^{+\infty} f_i z^i $,
$$
{1\over R(f)} = \limsup\limits_{i\to +\infty} |f_i|^{1/i} \ .
$$
It is convenient to introduce the e\~ne radius of convergence
of $f$ as
$$
\tilde R(f)=\min_i ( |\a_i| , R(f) )
$$
where $(\a_i)$ are the zeros of $f$. Since $f(0)=1$ we have
$$
R(f) \geq \tilde R(f) >0 \ .
$$
We observe that if we write $f=\exp (F)$ with $F=\log f$, then
$$
R(F)=\tilde R(f) \ .
$$
The first basic result is that the
e\~ne product of series with positive radius of convergence
is a series with positive radius of convergence :

\begin{theorem}\label{thm:anal1}
Let $f, g \in \cA$. We have
$$
R(f\star g) \geq \tilde R(f) . \tilde R(g) \ .
$$
In particular, the e\~ne product of two series with positive
radius of convergence has positive radius of convergence.
 \end{theorem}

\begin{remark}

We will improve this result and show that indeed
$$
\tilde R(f\star g) \geq \tilde R(f) . \tilde R(g) \ .
$$
\end{remark}

\bigskip

\begin{proof}

We write $f$ and $g$ in exponential form
\begin{align*}
f(z) &= \exp (F(z)) =\exp \left (\sum_{i=1}^{+\infty } F_i z^i\right ) \\
g(z) &= \exp (G(z)) =\exp \left (\sum_{i=1}^{+\infty } G_i z^i \right )
\end{align*}
Using Hadamard formula we get
\begin{align*}
{1\over R(F\star_e G)} &=\limsup\limits_{i\to +\infty} (i|F_i| |G_i|)^{1/i} \\
&=\limsup\limits_{i\to +\infty} (|F_i| |G_i|)^{1/i} \\
&\leq \left ( \limsup\limits_{i\to +\infty} |F_i|^{1/i} \right ).
\left ( \limsup_{i\to +\infty} |G_i|^{1/i} \right ) \\
&={1\over R(F)}.{1\over R(G)} 
\end{align*}
Therefore, we get
$$
R(f\star g)\geq R(F\star_e G) \geq R(F).R(G) =\tilde R(f) . \tilde R(g) \ .
$$
 
\end{proof}


We have the following continuity property:

\begin{theorem} \label{thm:anal2}
We consider the space
$\tilde \cA_{R_0}$ of power series
$f\in \cA$ with $\tilde R(f)\geq R_0$, i.e convergent and
with no zeros on the disk $\DD_{R_0}$
of center $0$ and radius $R_0 >0$. We consider also the space
$\cA_{R_0}$ of power series
$f\in \cA$ with $R(f)\geq R_0$. We
endow this spaces with the topology of
uniform convergence on compact subsets of $\DD_R$.
The e\~ne product $ \star :\tilde \cA_{R_1} \times  \tilde \cA_{R_2} \to \cA_{R_1 R_2}$
$$
(f,g)\mapsto f\star g
$$
is continuous.
 
\end{theorem}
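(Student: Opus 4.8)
The plan is to factor the eñe-product through its exponential form and prove that each factor is continuous for the compact-open topology. By the Exponential Form theorem we may write $f\star g=\exp\bigl(F\star_e G\bigr)$ with $F=\log f$ and $G=\log g$, so $\star$ is the composition
$$
(f,g)\longmapsto (F,G)\longmapsto F\star_e G\longmapsto \exp(F\star_e G),
$$
in which $\log$ acts on each factor, $\star_e$ is the exponential form of the product, and $\exp$ produces the output. Since $f\in\tilde\cA_{R_1}$ gives $R(F)=\tilde R(f)\geq R_1$ and likewise $R(G)\geq R_2$, the radius bound obtained in the proof of Theorem~\ref{thm:anal1}, namely $R(F\star_e G)\geq R(F)R(G)$, guarantees that $F\star_e G$ is holomorphic on $\DD_{R_1R_2}$ and hence $\exp(F\star_e G)\in\cA_{R_1R_2}$. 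All the spaces involved carry the metrizable compact-open topology, so it suffices to verify sequential continuity of each arrow.

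First I would establish continuity of the logarithm: if $f_n\to f$ uniformly on compact subsets of $\DD_{R_1}$, with all $f_n,f$ zero-free on $\DD_{R_1}$ and equal to $1$ at the origin, then $f_n'/f_n\to f'/f$ uniformly on compacts (the derivatives converge by Cauchy's estimates, and $|f|$ is bounded below on each compact, hence so is $|f_n|$ eventually), so integrating from $0$ along paths in the simply connected disk gives $\log f_n\to\log f$ uniformly on compacts. This is exactly where the hypothesis $\tilde R\geq R_0$ on the two factors is used.

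Second, continuity of $\star_e$. Writing the Hadamard (coefficientwise) product $H(z)=\sum_{i\geq 1}F_iG_i z^i$, one has $F\star_e G=-z\,H'(z)$, so it is enough to prove that $(F,G)\mapsto H$ is continuous, the Euler operator $-z\frac{d}{dz}$ being continuous for the compact-open topology by Cauchy's estimates. For the Hadamard product I would use the integral representation
$$
H(z)=\frac{1}{2\pi i}\oint_{|w|=\rho}F(w)\,G\!\left(\frac{z}{w}\right)\frac{dw}{w},
$$
valid for any $\rho$ with $r/R_2<\rho<R_1$ on the disk $|z|\leq r<R_1R_2$. If $F_n\to F$ uniformly on $|w|=\rho$ and $G_n\to G$ uniformly on $\{|\zeta|\leq r/\rho\}\subset\DD_{R_2}$, then these converging sequences are uniformly bounded on those sets, and the telescoping identity $F_nG_n-FG=(F_n-F)G_n+F(G_n-G)$, integrated over the contour, yields $H_n\to H$ uniformly on $|z|\leq r$.

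Finally, $\exp$ is continuous: on a compact set the limits $H_n,H$ are uniformly bounded by some $M$, whence $|\exp H_n-\exp H|\leq e^{M}|H_n-H|\to 0$ uniformly. Composing the three continuous maps gives the claim. The main obstacle is the middle step, that is, establishing the integral formula for the Hadamard product and checking the convergence of the contour integral precisely on the disk $\DD_{R_1R_2}$; this is the analytic incarnation of the radius bound of Theorem~\ref{thm:anal1}, while the logarithm and exponential steps are routine once the zero-free hypotheses are in place.
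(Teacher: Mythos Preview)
Your proof is correct and follows the same strategy as the paper: pass to exponential form and use that $F\star_e G$ has a simple coefficientwise expression. The paper's own argument is a two-line sketch---it writes $f=e^F$ with $R(F)\geq R_0$ and then simply asserts that ``the linear expression of the e\~ne-product on the coefficients of $F$ shows the continuity''---whereas you actually supply the analytic details: continuity of $\log$ via $f'/f$ and integration, continuity of the coefficientwise product via the Hadamard contour integral $H(z)=\frac{1}{2\pi i}\oint F(w)G(z/w)\,dw/w$, and continuity of $\exp$ by a Lipschitz estimate. Your use of the integral representation is the genuinely new ingredient relative to the paper; it makes precise why uniform convergence of $F_n,G_n$ on compacts forces uniform convergence of $F_n\star_e G_n$ on compacts of $\DD_{R_1R_2}$, a point the paper leaves implicit.
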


\begin{proof}

Any function $f \in \tilde \cA_{R_0}$ can be written
$f=e^F$ with $F=\log f$ having radius
of convergence at least $R_0>0$. The linear expression of the e\~ne product
on the coefficients of $F$ shows the continuity.
\end{proof}


We can now improve Theorem \ref{thm:anal1}.

\begin{corollary}
Let $f, g \in \cA$. We have
$$
\tilde R(f\star g) \geq \tilde R(f) . \tilde R(g) \ .
$$
 \end{corollary}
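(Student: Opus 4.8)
The plan is to reduce everything to the radius of convergence of the logarithm and then quote the $\limsup$ estimate already carried out inside the proof of Theorem \ref{thm:anal1}. The one ingredient I need is the identity $R(\log h)=\tilde R(h)$ recorded just before Theorem \ref{thm:anal1}: for any $h\in\cA$ the series $\log h$ is analytic precisely on the largest disc centered at $0$ on which $h$ is analytic and nonvanishing, so its radius of convergence equals $\min(\min_i|\alpha_i|,R(h))=\tilde R(h)$, where the $\alpha_i$ are the zeros of $h$. I would apply this not to $f$ or $g$ but to the e\~ne-product $h=f\star g$ itself, which is legitimate because $f\star g\in\cA$ has positive radius by Theorem \ref{thm:anal1} and satisfies $f\star g(0)=1$, so $\log(f\star g)$ is a well-defined element of $A[[X]]$.

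First I would write $f=e^F$ and $g=e^G$ in exponential form. By the Exponential form theorem, $f\star g=\exp(F\star_e G)$, whence $\log(f\star g)=F\star_e G=-\sum_{i\geq 1} i\,F_i G_i X^i$. Applying the identity above to $h=f\star g$ gives the equality $\tilde R(f\star g)=R(F\star_e G)$, which turns the statement into the purely analytic inequality $R(F\star_e G)\geq R(F)R(G)$. But this is exactly the chain of estimates in the proof of Theorem \ref{thm:anal1}: the factor $i^{1/i}\to 1$ is harmless, and submultiplicativity of $\limsup$ bounds $\limsup_i(|F_i||G_i|)^{1/i}$ by $\left(\limsup_i|F_i|^{1/i}\right)\left(\limsup_i|G_i|^{1/i}\right)=1/(R(F)R(G))$. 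Since $R(F)=\tilde R(f)$ and $R(G)=\tilde R(g)$, this yields $\tilde R(f\star g)=R(F\star_e G)\geq \tilde R(f)\tilde R(g)$, as required.

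The step that needs the most care, and the real content of the improvement, is the equality $\tilde R(f\star g)=R(\log(f\star g))$, which is what upgrades the bound $R(f\star g)\geq\tilde R(f)\tilde R(g)$ of Theorem \ref{thm:anal1} to the sharper $\tilde R(f\star g)\geq\tilde R(f)\tilde R(g)$. In Theorem \ref{thm:anal1} one had only $R(\exp H)\geq R(H)$, an inequality because exponentiating can enlarge the disc of convergence; passing through $\log$ instead gives an honest equality, since $\log$ and $\exp$ are inverse group isomorphisms and $\log(f\star g)$ is genuinely singular at the nearest zero or singularity of $f\star g$. I note that one could instead try to deduce the corollary from the continuity theorem \ref{thm:anal2} by approximating $f$ and $g$ with polynomials, for which $\tilde R(f\star g)=\tilde R(f)\tilde R(g)$ holds exactly from the product-of-zeros structure; the hard part on that route would be controlling the behavior of the zeros in the limit (a Hurwitz-type lower semicontinuity of $\tilde R$), which is precisely the difficulty that the direct logarithmic argument sidesteps. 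For that reason I prefer the direct approach above.
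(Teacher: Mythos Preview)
Your argument is correct and takes a genuinely different route from the paper. The paper proceeds by polynomial approximation and Hurwitz's theorem: it shows, using the continuity result of Theorem~\ref{thm:anal2}, that $T_N(f)\star T_N(g)\to f\star g$ uniformly on compact subsets of $\DD_{R(f)R(g)}$, so any zero $\xi$ of $f\star g$ inside that disc is a limit of zeros $\alpha_N\beta_N$ of the polynomial products; a compactness argument on the sequences $(\alpha_N)$, $(\beta_N)$ then forces $|\xi|\geq \tilde R(f)\tilde R(g)$. Your approach bypasses this entirely by applying the identity $R(\log h)=\tilde R(h)$, already recorded just before Theorem~\ref{thm:anal1}, directly to $h=f\star g$: since $\log(f\star g)=F\star_e G$ by the exponential form, and the $\limsup$ computation inside the proof of Theorem~\ref{thm:anal1} already gives $R(F\star_e G)\geq R(F)R(G)=\tilde R(f)\tilde R(g)$, the corollary drops out in one line. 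Your route is shorter and more in the spirit of the exponential-form machinery the paper has set up; the paper's Hurwitz argument, by contrast, makes the geometric picture (zeros of $f\star g$ arising as limits of products of zeros of approximants) explicit, which is illuminating but logically unnecessary here. You in fact anticipated the paper's strategy in your closing paragraph and correctly identified the Hurwitz-type control of zeros as its crux.
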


\begin{proof}
We only need to show that any zero $\xi$ of $f\star g$ with
$|\xi | < R(f\star g)$ satisfies $|\xi | \geq \tilde R(f) \tilde R(g)$.
If $|\xi | \geq R(f) R(g) \geq \tilde R(f) \tilde R(g)$ we are done.
We assume then $|\xi | < R(f) R(g)$. By the previous theorem
we have that
$$
T_N(f) \star T_N(g) \to f\star g
$$
when $N\to +\infty$ uniformly on compact sets in
$\DD_{R(f)R(g)}$. Thus, in particular,
we have uniform convergence in a compact neighborhood of $\xi$.
Then by Hurwitz theorem $\xi$ must be the limit of zeros of
$T_N(f)\star T_N(g)$. Any such zero is of the form $\a_N \b_N$ where
$\a_N$ (resp. $\b_N$) is a zero of the polynomial $T_N(f)$ (resp.
$T_N(g)$). Since $\a_N \b_N \to \xi$ we must have that the sequences
$(\a_N)$ and $(\b_N)$ are bounded (if $\a_{N_k}\to \infty$ then
$\b_{N_k} \to 0$ which is impossible). We can extract converging
subsequences $\a_{N_k} \to \a$ and $\b_{N_k} \to \b$. Finally we
observe that $|\a|\geq \tilde R(f)$ and $|\b|\geq \tilde R(g)$.
Because if $|\a| < R(f)$ then since $T_N(f)\to f$ in   $\DD_{R(f)}$
then $\a$ would be a zero of $f$ thus $|\a | \geq \tilde R(f)$.
The same argument applies to $\b$. We conclude
$$
|\xi|=|\a| |\b| \geq  \tilde R(f)\tilde R(g) \ ,
$$
as we wanted to show. 
\end{proof}

\begin{remark}
 We show in section \ref{sec:hada} as an application of Hadamard multiplication theorem 
that we do have the equality
$$
\tilde R(f\star g) = \tilde R(f) . \tilde R(g) \ .
$$

\end{remark}

\section{E\~ne product and entire functions.} \label{sec:enti}

We assume in this section that $A\subset \CC$. It is not difficult to see,
from the interpretation involving the zeros, 
that the e\~ne product
does extend from polynomials  to entire functions of order $<1$ and 
leaves this space invariant. The next result shows that we have 
better, Weierstrass factors cause no trouble and the e\~ne product
extends to functions of finite order.

\begin{theorem}
We consider
$0\leq \lambda <+\infty$.
We define the space $\cE_{\lambda} \subset \cA$ of entire 
functions of order $<\lambda$ with constant coefficient $1$.

The e\~ne product is an internal operation in $\cE_{\lambda}$
and $(\cE_{\lambda}, .,\star )$ is a subring of $\cA$.

Moreover, the e\~ne product
$\star : \cE_{\lambda} \times \cE_{\lambda} \to \cE_{\lambda}$, 
$(f,g)\to f\star g$
is continuous for the topology of uniform convergence on compact 
subsets.
\end{theorem}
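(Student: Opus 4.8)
The plan is to reduce everything to the exponential form $f\star g=\exp\bigl(-\sum_{i\ge 1} i\,F_iG_i\,X^i\bigr)$, where $f=e^F$, $g=e^G$ with $F=\sum F_iX^i=\log f$ and $G=\sum G_iX^i=\log g$, combined with the Hadamard--Weierstrass factorization of $f$ and $g$. Throughout I write $\rho_f,\rho_g<\lambda$ for the orders, $\sigma=\max(\rho_f,\rho_g)$ and $q=\lfloor\sigma\rfloor$. The only external input I would record first is that, by Hadamard factorization, an entire $f$ with $f(0)=1$ of order $\rho_f$ is $f=e^{P}\prod_k E_{q_f}(z/\alpha_k)$ with $\deg P\le\rho_f$ and genus $q_f\le\rho_f$; consequently the logarithmic coefficients satisfy $F_i=-s_i(f)/i$ for every $i>\rho_f$, where $s_i(f)=\sum_k\alpha_k^{-i}$ is the $i$-th power sum of the zeros (the series converging absolutely for $i>\rho_f$), and similarly for $g$.

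For the internal-operation and subring claims I argue as follows. The product zeros $(\alpha_k\beta_l)$ satisfy $\sum_{k,l}|\alpha_k\beta_l|^{-t}=\bigl(\sum_k|\alpha_k|^{-t}\bigr)\bigl(\sum_l|\beta_l|^{-t}\bigr)<\infty$ for every $t>\sigma$, so their exponent of convergence is $\le\sigma$ and the canonical product $\Pi(z)=\prod_{k,l}E_{q}\!\bigl(z/(\alpha_k\beta_l)\bigr)$ is entire of order $\le\sigma<\lambda$, with $[\log\Pi]_i=-s_i(f\star g)/i$ for $i>q$, where $s_i(f\star g)=\sum_{k,l}(\alpha_k\beta_l)^{-i}$. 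The key identity $s_i(f)\,s_i(g)=s_i(f\star g)$ (valid for $i>\sigma$ by absolute convergence) then gives, for every $i>\sigma$,
$$
[\log(f\star g)]_i=-i\,F_iG_i=-\frac{s_i(f)\,s_i(g)}{i}=-\frac{s_i(f\star g)}{i}=[\log\Pi]_i .
$$
Hence $R:=\log(f\star g)-\log\Pi$ is a polynomial of degree $\le\lfloor\sigma\rfloor$; since $f\star g$ has positive radius of convergence (Theorem \ref{thm:anal1}) and equals $1$ at $0$, both logarithms are genuine analytic functions near $0$, so $f\star g=\Pi\,e^{R}$ there and extends to an entire function of order $\le\sigma<\lambda$. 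This shows $\star$ is internal on $\cE_\lambda$; closure under multiplication is the classical bound $\mathrm{order}(f.g)\le\max(\rho_f,\rho_g)$, the zero $1$ and (for $\lambda>0$) the unit $1-X$ lie in $\cE_\lambda$, and the remaining ring axioms are inherited from $(\cA,.,\star)$.

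For continuity, both spaces being metrizable, I check sequential continuity: let $f_N\to f$ and $g_N\to g$ uniformly on compacta inside $\cE_\lambda$. Extraction of Taylor coefficients is continuous for this topology, and since each coefficient of $f\star g$ is the fixed integer polynomial $c_n=(-1)^nQ_n(a_1,\dots,a_n,b_1,\dots,b_n)$ in the coefficients of $f$ and $g$, the product $f_N\star g_N$ converges to $f\star g$ coefficient by coefficient. It therefore suffices to prove that $\{f_N\star g_N\}$ is locally uniformly bounded, for then it is a normal family and its only possible compact-open limit is the coefficientwise limit $f\star g$.

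The local uniform bound is the step I expect to be the main obstacle, since under compact-open convergence the orders $\rho_{f_N}$ need not stay bounded away from $\lambda$ and zeros may enter from infinity. I would extract it from the factorization $f\star g=\Pi\,e^{R}$: on a fixed disk $\overline{\DD}_r$ the factor $e^{R}$ is controlled by the finitely many low-order logarithmic coefficients of $f$ and $g$ (which converge), while $\log|\Pi|$ is estimated by the standard Weierstrass bounds $\log|E_{q}(w)|\lesssim\min(|w|^{q},|w|^{q+1})$, reducing matters to uniform control of $\sum_{k,l}|\alpha_k\beta_l|^{-t}=\bigl(\sum_k|\alpha_k|^{-t}\bigr)\bigl(\sum_l|\beta_l|^{-t}\bigr)$ along the sequence. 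To secure this I would split each factor, via Theorem \ref{thm:anal2}, into its part that is zero-free on a large disk --- handled by the disk-continuity already established --- and its finitely many nearby zeros, which vary continuously by Hurwitz's theorem and are treated through the geometric product formula; the bookkeeping of these escaping zeros is the genuinely delicate point.
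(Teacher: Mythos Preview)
Your argument for the internal-operation and subring claims is correct and arrives at exactly the same destination as the paper: the Hadamard--Weierstrass factorization $f\star g=e^{R}\,\prod_{i,j}E_{q}\bigl(z/(\a_i\b_j)\bigr)$ with $R$ a polynomial of degree $\le q$. The route, however, differs. The paper works entirely inside the formal ring $\cA$ and expands by distributivity,
\[
f\star g=(e^F\star e^G)\cdot\Bigl(e^F\star\prod_j E_\rho(z/\b_j)\Bigr)\cdot\Bigl(\prod_i E_\rho(z/\a_i)\star e^G\Bigr)\cdot\Bigl(\prod_i E_\rho(z/\a_i)\star\prod_j E_\rho(z/\b_j)\Bigr),
\]
then kills the two cross terms using the earlier formula $E_N\star h=h\cdot T_N^e(1/h)$ (so $e^F\star E_\rho(z/\b_j)=1$ since $\deg F\le\rho$), and collapses the last factor via $E_\rho\star E_\rho=E_\rho$ together with the homothety rule $h(aX)\star k(X)=(h\star k)(aX)$. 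Your approach bypasses these algebraic identities by comparing Taylor coefficients of logarithms directly through the power-sum identity $s_i(f)s_i(g)=\sum_{k,l}(\a_k\b_l)^{-i}$. Both are sound; the paper's version is more modular (it cashes in the e\~ne calculus built up in Section~\ref{sec:prod}), while yours is more self-contained and makes the role of the exponent of convergence transparent.

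On continuity you are candid that the local uniform bound is unfinished, and that honesty is warranted: the paper itself does not give a detailed proof of continuity of $(f,g)\mapsto f\star g$ on $\cE_\lambda$. Its argument (at the end of the proof of the factorization theorem) only addresses convergence of the Weierstrass product for a \emph{fixed} pair $(f,g)$, by splitting off the finitely many factors with zeros inside a given disk and invoking Theorem~\ref{thm:anal2} for the zero-free tail. Your proposed strategy --- normal-family reduction plus a Hurwitz-type splitting into nearby zeros and a zero-free remainder --- is the natural way to upgrade that to genuine sequential continuity, and the ``escaping zeros'' bookkeeping you flag is indeed the point neither you nor the paper fully resolves.
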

This theorem results from the next result that is 
more general and 
that shows that the e\~ne product
respects Hadamard-Weierstrass factorization
of entire functions of finite genus. 
We recall that the genus $\rho$ 
of en entire function $f$ is the minimal integer so that 
$f$ can be written
$$
f(z)=e^{F(z)} \prod_i E_{\rho}\left ({z\over \a_i} \right ) 
$$
where $F\in \CC [z]$ is a polynomial of degree $\leq \rho$, 
and the infinite product is uniformly convergent on compact subsets 
of $\CC$. The factorization for general entire functions is
due to Weierstrass. The above factorization for functions of 
finite genus is due to Hadamard.
If no such $\rho$ exists then the genus is infinite.
We have $\rho\leq \lambda\leq \rho+1$ (see for example \cite{A} p. 209).

\begin{theorem}
Let $f$ and $g$ be entire functions of 
finite genus $0\leq \rho <+\infty$ with respective
sets of zeros $(\a_i)$ and $(\b_j)$. We assume that $f(0)=g(0)=1$.
We consider the Hadamard-Weierstrass
factorizations 
\begin{align*}
f(z) &=e^{F(z)} \prod_i E_{\rho }\left ({z\over \a_i} \right )  \\
g(z) &=e^{G(z)} \prod_j E_{\rho }\left ({z\over \b_j} \right )  
\end{align*}
where, by definition of the genus, 
$F$ and $G$ are polynomials vanishing at $0$ of degree $ \leq \rho$.
Then $F \star_e G $ is a polynomial of degree $\leq \rho$
and we have
$$
f\star g (z)=e^{F \star_e G (z)} \prod_{i,j} 
E_{\rho }\left (z\over \a_i \b_j \right ) \ .
$$
\end{theorem}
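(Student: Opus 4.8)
The plan is to push everything through the \emph{exponential form} of the e\~ne-product, where the operation becomes coefficient-wise multiplication, and to organize the exponential coefficients of $f$ and $g$ according to whether their index is $\leq \rho$ or $>\rho$. First I would write $f=e^{\Phi}$ and $g=e^{\Psi}$ by taking logarithms of the Hadamard-Weierstrass factorizations. Using $E_{\rho}(X)=\exp\left(-\sum_{k>\rho}X^k/k\right)$, the factorization of $f$ gives
$$
\Phi(z)=F(z)-\sum_{k>\rho}\frac{s_k}{k}\,z^k, \qquad s_k=\sum_i \alpha_i^{-k},
$$
and similarly $\Psi(z)=G(z)-\sum_{k>\rho}(t_k/k)\,z^k$ with $t_k=\sum_j \beta_j^{-k}$. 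The crucial structural point is that the polynomial $F$ (of degree $\leq\rho$, vanishing at $0$) contributes only to the coefficients $\Phi_k$ with $1\leq k\leq\rho$, while the Weierstrass product contributes only to those with $k>\rho$; the two ranges of indices are disjoint.

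Then I would apply the Exponential form theorem, $f\star g=\exp(\Phi\star_e\Psi)=\exp\left(-\sum_k k\,\Phi_k\Psi_k\,z^k\right)$, and compute $-k\,\Phi_k\Psi_k$ in the two ranges. For $1\leq k\leq\rho$ it equals $-kF_kG_k$, which are exactly the coefficients of $F\star_e G$; since $F$ and $G$ both have degree $\leq\rho$ this already shows $F\star_e G$ is a polynomial of degree $\leq\rho$. For $k>\rho$ it equals $-k\cdot(s_k/k)(t_k/k)=-\tfrac{1}{k}s_kt_k$, and here I would use the elementary identity $s_kt_k=\sum_{i,j}(\alpha_i\beta_j)^{-k}$ to recognize $-\tfrac{1}{k}s_kt_k$ as the coefficient of $z^k$ in $\sum_{i,j}\log E_{\rho}(z/(\alpha_i\beta_j))$. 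In the language of distributivity one sees the same phenomenon directly: the cross e\~ne-products $e^F\star E_{\rho}(z/\beta_j)$ and $E_{\rho}(z/\alpha_i)\star e^G$ are all equal to $1$, because the exponential coefficients of $e^F$ are supported on indices $\leq\rho$ while those of $E_{\rho}$ are supported on indices $>\rho$, so their coefficient-wise product vanishes, leaving only $e^F\star e^G=e^{F\star_e G}$ and the factors $E_{\rho}(z/\alpha_i)\star E_{\rho}(z/\beta_j)=(E_{\rho}\star E_{\rho})(z/(\alpha_i\beta_j))=E_{\rho}(z/(\alpha_i\beta_j))$.

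To turn this coefficient identity into the stated equality of entire functions I would proceed as follows. The genus hypothesis gives $\sum_i|\alpha_i|^{-(\rho+1)}<\infty$ and $\sum_j|\beta_j|^{-(\rho+1)}<\infty$, hence $\sum_{i,j}|\alpha_i\beta_j|^{-(\rho+1)}<\infty$, so the double product $h(z)=e^{F\star_e G(z)}\prod_{i,j}E_{\rho}(z/(\alpha_i\beta_j))$ converges locally uniformly and defines an entire function of genus $\leq\rho$. Since $0$ is not a zero of $f$ or $g$, the moduli $|\alpha_i\beta_j|$ are bounded below by $(\min_i|\alpha_i|)(\min_j|\beta_j|)>0$; on that disk every factor $\log E_{\rho}(z/(\alpha_i\beta_j))$ has a convergent power series, and the summability just noted (valid for each fixed exponent $k\geq\rho+1$) lets me extract the coefficient of $z^k$ in $\log h$ term by term, obtaining precisely $-k\,\Phi_k\Psi_k$. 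Thus $h$ and $f\star g$ have the same formal power series; as $h$ is entire, this series converges to $h$, and therefore $f\star g=h$, which is the asserted formula.

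The routine manipulations (taking logarithms of the Weierstrass factors and the algebra of $s_kt_k$) are immediate; I expect the only genuine obstacle to be the analytic bookkeeping of the last paragraph, namely deducing convergence of the double product from the genus condition and justifying the term-by-term coefficient extraction, so as to pass rigorously from the formal exponential-form identity to the equality of entire functions.
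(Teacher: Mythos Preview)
Your proof is correct and follows essentially the same route as the paper: both arguments split the Hadamard--Weierstrass factorization into the polynomial exponential factor and the product of Weierstrass primary factors, use distributivity (equivalently, the exponential form), observe that the cross terms $e^F\star E_\rho(z/\beta_j)$ and $E_\rho(z/\alpha_i)\star e^G$ vanish because the exponential coefficients of $e^F$, $e^G$ live in indices $\leq\rho$ while those of $E_\rho$ live in indices $>\rho$, and use $E_\rho\star E_\rho=E_\rho$ for the remaining diagonal piece. The only substantive difference is in the analytic justification: the paper handles convergence of $\prod_{i,j}E_\rho(z/(\alpha_i\beta_j))$ by invoking the continuity of the e\~ne-product on zero-free domains (Theorem~\ref{thm:anal2}), separating off the finitely many factors with zeros in a given ball, whereas you argue more directly from the genus hypothesis, using $\sum_{i,j}|\alpha_i\beta_j|^{-(\rho+1)}=\big(\sum_i|\alpha_i|^{-(\rho+1)}\big)\big(\sum_j|\beta_j|^{-(\rho+1)}\big)<\infty$ to get local uniform convergence and to justify term-by-term coefficient extraction. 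Your route is self-contained and avoids the appeal to Theorem~\ref{thm:anal2}; the paper's route is shorter once that continuity result is in hand.
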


\begin{proof}
The exponential form of the e\~ne product shows
that $F \star_e G $ is 
a polynomial of degree $\leq \rho$. 

Now, working on the ring $\cA$ (thus we do not need to pay 
attention to questions of convergence for the moment 
and the computations are done 
at the formal level), we have, using distributivity,
\begin{align*}
f\star g (z) &=(e^F \star e^G). \left (e^F \star
\prod_j E_{\rho }\left ({z\over \b_j} \right ) \right ) . 
\left ( \prod_i E_{\rho }\left ({z\over \a_i} \right )
\star  e^G \right ). \\
&\ \ \ \ \ \ \left ( \prod_i E_{\rho }\left 
({z\over \a_i} \right ) \star \prod_j E_{\rho }\left ({z\over \b_j} \right ) 
\right )
\end{align*}
Now, we have
$$
e^F \star e^G =e^{F\star_e G} \ .
$$
Also  we have
\begin{align*}
e^F \star
\prod_j E_{\rho }\left ({z\over \b_j} \right )&=\prod_j e^F \star 
E_{\rho }\left ({z\over \b_j} \right ) \\ 
&=\prod_j e^{F(z/\b_j)} \star 
E_{\rho }(z ) \\
&=\prod_j e^{F(z/\b_j)}. T_\rho^e \left (e^{-F} \right )(z/\b_j)  \\
&=\prod_j e^{ \left ( F-T_\rho (F) \right ) (z/\b_j)} \\
&=1 
\end{align*}
because $\deg F \leq \rho$ thus $F-T_\rho (F) =0$.
By the same reasons
$$
\prod_i E_{\rho }\left ({z\over \a_i} \right )
\star  e^G =1 \ .
$$
And finally, 
\begin{align*}
& \prod_i E_{\rho }\left 
({z\over \a_i} \right ) \star \prod_j E_{\rho }\left ({z\over \b_j}\right ) \\
&=\prod_{i,j} E_{\rho }\left 
({z\over \a_i} \right ) \star E_{\rho }\left ({z\over \b_j} \right ) \\
&=\prod_{i,j} E_{\rho }(z) 
\star E_{\rho }\left ({z\over \a_i \b_j} \right ) \\
&=\prod_{i,j} E_{\rho} \left ({z\over \a_i \b_j} \right ) . 
T_N^e \left ( 1/E_{\rho} \left ({z\over \a_i \b_j} \right ) \right ) \\
&= \prod_{i,j} E_{\rho} \left ({z\over \a_i \b_j} \right ) 
\end{align*}
where the last equality is obtained observing that 
$$
T_N^e \left ( 1/E_{\rho} \left ({z\over \a_i \b_j} \right ) \right ) =1 \ .
$$
Thus we have established the formal Weierstrass factorization
for $f\star g$. We only need to check that the product 
of Weierstrass factors is uniformly convergent on compact subsets 
of $\CC$. This follows from the continuity of the e\~ne product
for the topology of uniform convergence on compact sets
on a domain where the functions have no zeros. Given a compact set 
in the plane, we consider a ball $\DD_R$ of center $0$ and radius $R>0$ 
large enough to contain the compact set.
Consider only those Weierstrass factors having zeros out 
of this ball, we observe that their product converges uniformly 
as well as they e\~ne product by theorem \ref{thm:anal2}. The remaining
Weierstrass factors are finite.
\end{proof}

\begin{remark}
This previous result has a generalization to arbitrary 
entire functions of infinite genus. We must then choose 
the orders in the Weierstrass factors large enough (depending
on $f$ and $g$) in order not to introduce other terms in 
the exponential besides $F\star_e G$.
 \end{remark}

\section{E\~ne-product and Hadamard multiplication.} \label{sec:hada}

We consider in this section an arbitrary ring $A$ unless otherwise stated.
We recall the definition of Hadamard multiplication (see \cite{Ha}).

\begin{definition}\textbf{(Hadamard multiplication)}
The Hadamard multiplication of $f(X), g(X) \in 
A[[X]]$,
\begin{align*}
f(X) &=\sum_{n=0}^{+\infty } f_n X^n \\
g(X) &=\sum_{n=0}^{+\infty } g_n X^n
 \end{align*}
is
$$
f\odot g (X)=\sum_{n=0}^{+\infty} f_n g_n X^n \ .
$$
\end{definition}

Note that the Hadamard multiplication is an internal operation in
$A[X]$, $XA[[X]]$, $\cA$ and $A[[X]]$. The neutral element in $A[[X]]$ for the Hadamard multiplication is
$$
{1 \over 1-X} =1+\sum_{n=1}^{+\infty} X^n \ .
$$

More precisely, we have

\begin{theorem}
The sum and the Hadamard multiplication are internal 
operations in $A[X]$, $\cA$ and $A[[X]]$, and
$(A[[X]], + , \odot )$ is a unitary commutative ring.
\end{theorem}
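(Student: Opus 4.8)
The plan is to reduce every assertion to a coefficientwise statement, exploiting that Hadamard multiplication is \emph{diagonal}: by definition the $n$-th coefficient of $f\odot g$ is $(f\odot g)_n=f_n g_n$, depending only on the $n$-th coefficients of the two factors. Since $(A[[X]],+)$ is the usual abelian group of formal power series (and $A[X]$, $\cA$ are subgroups of it), the additive axioms are immediate, and all the content lies in checking that $\odot$ is internal on each of the three spaces and that it satisfies the multiplicative ring axioms.

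First I would check internality. For $A[[X]]$ it is trivial, since $f\odot g$ is manifestly a formal power series. For $A[X]$, if $f$ and $g$ are polynomials then $f_n=g_n=0$ for all large $n$, whence $(f\odot g)_n=f_n g_n=0$ for all large $n$ and $f\odot g$ is again a polynomial. For $\cA=1+XA[[X]]$, if $f,g\in\cA$ then $f_0=g_0=1$, so $(f\odot g)_0=f_0 g_0=1$ and $f\odot g\in\cA$; thus $\odot$ restricts to each of the three spaces.

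Next I would verify the multiplicative axioms term by term, each following from the corresponding property of the coefficient ring $A$. Commutativity $f\odot g=g\odot f$ follows from $f_n g_n=g_n f_n$; associativity $(f\odot g)\odot h=f\odot(g\odot h)$ from $(f_n g_n)h_n=f_n(g_n h_n)$; and distributivity over the sum, $f\odot(g+h)=(f\odot g)+(f\odot h)$, from $f_n(g_n+h_n)=f_n g_n+f_n h_n$. The neutral element for $\odot$ on $A[[X]]$ is the series $\sum_{n\geq 0}X^n=1/(1-X)$ all of whose coefficients equal $1\in A$, since $\left(f\odot(1/(1-X))\right)_n=f_n\cdot 1=f_n$ for every $n$.

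There is no genuine obstacle here: the whole statement is a formal diagonal computation once the definition is unwound, and the commutativity hypothesis on $A$ enters only to deliver commutativity of $\odot$. The single point worth flagging is the identity element, which depends on the ambient space: on full $A[[X]]$ it is $1/(1-X)$, whereas on the set $XA[[X]]$ of series with vanishing constant term the identity is the restricted all-ones series $\sum_{n\geq 1}X^n=X/(1-X)$, consistent with the remark preceding the statement.
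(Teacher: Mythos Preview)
Your argument is correct: the coefficientwise verification is exactly the natural route, and each step is sound. The paper itself states this theorem without proof, treating it as elementary; your write-up simply supplies the routine details that the paper omits. Your observation about the identity element is apt and in fact clarifies a small ambiguity in the text: the remark preceding the theorem gives $X/(1-X)$ as the unit, which is the identity on $XA[[X]]$, whereas on the full ring $A[[X]]$ the unit is $1/(1-X)$ as you note.
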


The Hadamard multiplication has similar properties than the exponential e\~ne product $\star_e$.
For example, we have

\begin{theorem}
If $P(X)\in A[X]$ and $f(X)\in A[[X]]$ then
$P\odot f \in A[X]$, and 
$$
\deg (P) =\deg (P \odot f)\ .
$$
\end{theorem}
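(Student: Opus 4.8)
The plan is to prove everything directly from the coefficientwise definition of $\odot$, since the statement concerns nothing but the individual coefficients. First I would fix notation: write $P(X)=\sum_{n=0}^{d}p_n X^n$ with $d=\deg P$, so that $p_d\neq 0$ and $p_n=0$ for every $n>d$, and write $f(X)=\sum_{n=0}^{+\infty}f_n X^n\in A[[X]]$.

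The key step is a truncation observation. Applying the definition gives $P\odot f(X)=\sum_{n=0}^{+\infty}p_n f_n X^n$, and since $p_n=0$ whenever $n>d$, each such term vanishes, as $p_n f_n=0$. Hence the series collapses to a finite sum,
$$
P\odot f(X)=\sum_{n=0}^{d}p_n f_n X^n\ ,
$$
which simultaneously shows $P\odot f\in A[X]$ and gives the bound $\deg(P\odot f)\leq d=\deg P$. This is the exact analogue, for $\odot$, of the truncation that makes $e^{XP(X)}\star f$ a polynomial of degree $\leq\deg P$ in the exponential form of the e\~ne-product, so the computation runs in parallel to that earlier one.

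The part I expect to need the most care is the reverse inequality, that is, the claim of genuine equality of degrees. The top coefficient of $P\odot f$ is $p_d f_d$, and since $p_d\neq 0$ the degree is exactly $d$ precisely when $p_d f_d\neq 0$. This non-cancellation is where an assumption on $A$ enters: when $A$ is an integral domain (in particular when $A\subset\CC$) and $f_d\neq 0$ the leading term survives and $\deg(P\odot f)=\deg P$, whereas a vanishing $f_d$, or a zero divisor in $A$ meeting $p_d$, could lower the degree. So the real content of the degree identity is merely the absence of cancellation in the leading coefficient, and all remaining assertions of the theorem are immediate from the displayed finite formula.
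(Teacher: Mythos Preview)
The paper states this theorem without proof; it is treated as immediate from the coefficientwise definition of $\odot$, exactly as you argue. Your truncation computation is the natural and complete argument for $P\odot f\in A[X]$ and for the inequality $\deg(P\odot f)\leq\deg P$, and it is what the paper implicitly has in mind.

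Your caution about the reverse inequality is well placed and in fact sharper than the paper. As written, the identity $\deg(P)=\deg(P\odot f)$ is not literally true for arbitrary $f\in A[[X]]$ over an arbitrary commutative ring: taking $f$ with $f_d=0$ (for instance $f=1$), or taking $A$ with zero divisors so that $p_d f_d=0$ even though $p_d\neq 0$, already gives counterexamples. Only the inequality $\deg(P\odot f)\leq\deg P$ holds unconditionally, in parallel with the earlier result $\deg Q\leq\deg P$ for $e^{XP(X)}\star f=e^{XQ(X)}$; the paper's phrasing of the Hadamard version as an equality should be read informally (the point being that the degree cannot increase), or else under the tacit hypothesis that the relevant leading coefficient $p_d f_d$ does not vanish. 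Your identification of exactly where and why an extra assumption is needed is correct.
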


The relation to the exponential e\~ne product is clear
from the definition.

\begin{theorem}
We have for $F(X), G(X) \in A[[X]]$,
$$
F\star_e G=- K_0\odot F \odot G 
$$
where 
$$
K_0 (X) =X +2 X^2+3X^3+\ldots ={X \over (1-X)^2} 
$$
is the Koebe function.
\end{theorem}
The Koebe function plays a central role in Univalent
Function Theory, being extremal for many problems. This result
means that the exponential e\~ne product structure is the
Hadamard ring structure twisted by $-K_0$.
Note that when $\QQ\subset A$, the inverse of $-K_0$ for
the Hadamard multiplication in $XA[[X]]$  is
$$
-X-{1\over 2} X^2-{1\over 3} X^3-\ldots =\log (1-X) \ ,
$$
i.e. it is also the unit for the exponential e\~ne product.
Directly from the definition we get:

\begin{theorem}
Let $F,G \in A[[X]]$. We have
$$
D(F\star_e G)=-D(F)\odot D(G) \ .
$$
More precisely,
$-D: (XA[[X]], +, \star_e) \to (A[[X]], +, \odot)$, $F\mapsto -D(F)$
is a ring homomorphism between the exponential e\~ne ring
structure and the Hadamard ring structure. It is an isomorphism when $\QQ\subset A$.
\end{theorem}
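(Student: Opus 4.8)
The plan is to prove the coefficientwise identity first and then upgrade it to the ring isomorphism statement; both are direct verifications once the index bookkeeping is set up, and I expect the only delicate point to be keeping track of the two sources of the integer factor $i$.

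First I would establish $D(F\star_e G)=-D(F)\odot D(G)$ by comparing coefficients. Writing $F=\sum_{i\geq 1}F_iX^i$ and $G=\sum_{i\geq 1}G_iX^i$, the exponential form of the e\~ne-product gives $F\star_e G=\sum_{i\geq 1}(-i\,F_iG_i)X^i$, so applying $D$ puts the coefficient $-i^2F_iG_i$ in front of $X^{i-1}$. On the other side $D(F)=\sum_{i\geq 1}iF_iX^{i-1}$ and $D(G)=\sum_{i\geq 1}iG_iX^{i-1}$, so their Hadamard product carries the coefficient $(iF_i)(iG_i)=i^2F_iG_i$ in front of $X^{i-1}$; negating yields $-i^2F_iG_i$. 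The two expansions agree term by term, which is the identity. (Alternatively one can feed the factorization $F\star_e G=-K_0\odot F\odot G$ from the previous theorem through $D$, but the direct comparison is cleanest.)

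For the isomorphism statement I would check the three requirements in turn. Bijectivity and additivity: the map $-D$ is the composite of the group isomorphism $D:(XA[[X]],+)\to(A[[X]],+)$ recalled in Section \ref{sec:preliminaries} with the sign automorphism of $(A[[X]],+)$, hence is itself an isomorphism of additive groups, and in particular $-D(F+G)=-D(F)-D(G)$. Multiplicativity: since $\odot$ is bilinear the two minus signs cancel, so $(-D(F))\odot(-D(G))=D(F)\odot D(G)$, and by the identity just proved this equals $-D(F\star_e G)$; thus $-D$ carries $\star_e$ into $\odot$.

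Finally I would verify that $-D$ matches the units. The unit of $\star_e$ is $\log(1-X)=-\sum_{i\geq 1}X^i/i$, and $D(\log(1-X))=-1/(1-X)$, so $-D(\log(1-X))=1/(1-X)=\sum_{n\geq 0}X^n$ is exactly the Hadamard unit of $(A[[X]],+,\odot)$. Combined with the two previous points, this exhibits $-D$ as a ring isomorphism. There is no deep obstacle; the one place demanding care is precisely that the factor $-i$ coming from $\star_e$ and the factor $i$ coming from $D$ must combine to the $-i^2$ produced on the Hadamard side, and that the spaces and their units are correctly identified (in particular that the constant term produced by $-D$ lands us in $(A[[X]],+,\odot)$ with unit $1/(1-X)$), so that the $\star_e$-unit is sent to the $\odot$-unit.
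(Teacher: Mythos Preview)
Your proof is correct and follows exactly the approach the paper intends: the paper introduces this theorem with the phrase ``Also directly from the definition we get'' and supplies no further argument, so your coefficient comparison and routine verification of the ring-homomorphism axioms are precisely the details being left to the reader. One small remark: the paper earlier records the Hadamard unit as $X/(1-X)$, but since the target of $-D$ is $(A[[X]],+,\odot)$ rather than $(XA[[X]],+,\odot)$, your identification of the unit as $1/(1-X)=\sum_{n\ge 0}X^n$ is the correct one for the stated codomain.
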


As corollary  we get the direct relation to 
the e\~ne product.

\begin{theorem}
Let $f,g \in \cA$. We have
$$
\cD (f \star g )=-\cD (f) \odot \cD (g) \ .
$$
\end{theorem}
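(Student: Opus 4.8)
The plan is to derive this statement as an immediate corollary of the preceding theorem relating $\star_e$ and the Hadamard product $\odot$, using the logarithmic-derivative isomorphism $\cD$ to transport everything from the exponential form back to the group $(\cA,.)$. Recall from the Exponential form theorem that if $f=e^F$ and $g=e^G$ with $F,G\in XA[[X]]$, then $f\star g = e^{F\star_e G}$. The key link is the factorization of the ordinary derivative $D=\cD\circ\exp$ noted in the preliminaries.

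First I would write $f=e^F$ and $g=e^G$, so that $f\star g=e^{F\star_e G}$ by the Exponential form theorem. Applying the logarithmic derivative and using $D=\cD\circ\exp$ gives
$$
\cD(f\star g)=\cD(e^{F\star_e G})=D(F\star_e G)\ .
$$
Similarly $\cD(f)=\cD(e^F)=D(F)$ and $\cD(g)=D(G)$. Now I would invoke the preceding theorem, which states precisely that
$$
D(F\star_e G)=-D(F)\odot D(G)\ .
$$
Substituting the three identities above yields
$$
\cD(f\star g)=-\cD(f)\odot\cD(g)\ ,
$$
which is the claim.

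The only point requiring a moment of care is the compatibility of the three maps: one must check that $\cD\circ\exp=D$ on $XA[[X]]$, but this is exactly the relation $D=\cD\circ\exp$ recorded in the preliminaries, so it is available for free. Everything else is pure substitution, and no convergence or combinatorial work is needed since we operate entirely at the level of the ring isomorphisms already established.

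I do not anticipate a genuine obstacle here; the statement is essentially a restatement of the previous theorem after pushing forward along the exponential isomorphism. The main conceptual content lives in the earlier theorem $D(F\star_e G)=-D(F)\odot D(G)$, which itself follows directly from the coefficientwise formula $F\star_e G=-K_0\odot F\odot G$ together with the fact that $D$ multiplies the $n$-th coefficient by $n$ while $-K_0$ contributes exactly the factor $n$; once that is granted, the corollary for $\cD$ and the full e\~ne-product $\star$ on $\cA$ is immediate.
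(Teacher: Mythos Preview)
Your proof is correct and follows exactly the same route as the paper: write $f=e^F$, $g=e^G$, use $f\star g=e^{F\star_e G}$ and the factorization $D=\cD\circ\exp$ to reduce to the preceding identity $D(F\star_e G)=-D(F)\odot D(G)$. The paper's proof is the one-line chain $\cD(f\star g)=D(F\star_e G)=-D(F)\odot D(G)=-\cD(f)\odot\cD(g)$, which is precisely what you wrote with additional commentary.
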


\begin{proof}
Write 
\begin{align*}
f&=\exp (F) \\
g &=\exp (G) 
\end{align*}
and observe that 
$$
\cD (f \star g ) =D(F\star_e G) =-D(F) \odot D(G) =-\cD (f) \odot 
\cD (g) \ .
$$
\end{proof}

\section{Extension of the e\~ne product and inversion.} \label{sec:exte}

We assume that $A$ is a field.
We extend the definition of e\~ne product to the full ring of
non zero polynomials $A[X]$ by using the interpretation 
with roots. 

\begin{definition}
For $P(X), Q(X) \in A[X]$, $P$ and $Q$ non zero, 
with 
\begin{align*}
P(X)&=a_0 X^n P_0(X) \\
Q(X)&=b_0 X^m Q_0(X) 
\end{align*}
where $a_0, b_0 \in A-\{ 0\}$, and $P_0(X), Q_0(X) \in 1+X \ A[X]$ we define
$$
(P\star Q)(X)=X^{n \deg (Q_0) +m \deg (P_0) +nm} (P_0 \star Q_0)(X) \ .
$$
\end{definition}

For simplicity we assume that $A$ is a field.
Denote by $\PP A[X]$ the projective space of non-zero polynomials
(two polynomials differing by a non-zero multiplicative constant are
equivalent).

\begin{proposition}
We have that $ (\PP A[X], ., \star)$ is
a commutative ring.
 \end{proposition}

We can extend the e\~ne product to the projective space of rational functions $\PP A(X)$.

\begin{definition}
We extend the ene product to non-zero rational
functions quotients of elements in $\PP A(X)$. If we have
\begin{align*}
R_1(X) &= {P_1(X) \over Q_1(X) } \\
R_2(X) &= {P_2(X) \over Q_2(X) } 
\end{align*}
with $P_1, P_2, Q_1, Q_2 \in \PP A[X]$ then we define
$$
(R_1 \star R_2)(X)={(P_1 \star P_2).(Q_1 \star Q_2) \over (P_1\star Q_2).
(Q_1 \star P_2)} \ .
$$
\end{definition}

\begin{proposition}
We have that $ (\PP A(X), ., \star)$ is
a commutative ring.
 \end{proposition}
Next we prove  a main property of this extension of the e\~ne
product. It shows that the points at $0$ and $\infty$ play a symmetric  role.

\begin{theorem}
The e\~ne product is invariant by inversion.
More precisely, let $P(X), Q(X) \in \PP A[X]$, then
$$
P({1/ X})\star Q({1/ X}) =(P\star Q) ({1/ X}) \ .
$$
\end{theorem}

\begin{proof}
Write 
\begin{align*}
P({1/X})&=\frac{\hat P (X)}{ X^n} \ ,\\
Q({1/ X})&=\frac{\hat Q (X)}{X^m} \ ,
\end{align*}
where $\hat P(X), \hat Q(X) \in \cA$. In the case $A= \CC$, we
observe that if $(\a_i)$ are the zeros of $P$ then the
zeros of $\hat P$ are $(\a_i^{-1} )$. From this observation
and universality of the polynomial formulas it follows that
$$
(P\star Q)({1/X})=\frac{(\hat P \star \hat Q )(X)}{X^{nm}} \ .
$$

We have now
\begin{align*}
P({1/X})\star Q({1/X}) &= \frac{\hat P (X)}{X^n} \star
\frac{\hat Q(X)}{X^m} \\
&=\frac{(\hat P \star \hat Q).(X^n\star X^m)}{(\hat P \star X^m).
(\hat Q \star X^n)} \\
&=\frac {\hat P\star \hat Q}{X^{nm}} \\
&= (P\star Q) ({1/X})
\end{align*}

\end{proof}

We have an easy application (compare with the classical proofs):

\begin{proposition}
The set of non-zero algebraic numbers in $\CC$, resp. algebraic integers in $\CC$,  is a multiplicative group.
More precisely, if $\alpha$ and $\beta$ are algebraic numbers, resp. algebraic integers, then $\alpha \beta$ is an algebraic number, resp. algebraic integer.
\end{proposition}

\begin{proof}
If $\alpha =0$ or $\beta=0$ the result is clear. Otherwise, choose $P, Q \in \QQ[X]$ such that $P(0)=1$ and $Q(0)=1$ such that
$P(\alpha)=Q(\beta)=0$. Then we have that $P\star Q \in \QQ[X]$ and $P\star Q(\alpha \beta)=0$ which proves that $\alpha$ and $\beta$ are algebraic numbers.

If $\alpha$ and $\beta$ are non-zero algebraic integers,
then choose monic polynomials
$P\in \ZZ[X]$ of degree $n\geq 1$, and $Q\in \ZZ[X]$ of degree $m\geq 1$ such
that $P(\alpha)=0$ and $Q(\beta)=0$. Consider the polynomials
$\hat P(X)= X^nP(1/X)$ of degree $n$ and
$\hat Q(X) =X^m Q(1/X)$ of degree $m$. Then $\hat P(1/\alpha)=0$, $\hat Q(1/\beta)=0$ and
$\hat P(X), \hat Q(X) \in 1+X\ZZ[X]$. Therefore $\hat P\star \hat Q \in 1+X\ZZ[X]$ and $(\hat P \star \hat Q) (1/(\alpha \beta))=0$ and the polynomial
$X^{n+m}(\hat P \star \hat Q) (1/X)$ is monic, has integer coefficients
and annihilates $\alpha.\beta$, so $\alpha.\beta$ is an algebraic integer.

\end{proof}

\bigskip

\textbf{Acknowledgements.} The author is grateful to  Daniel Barsky, Hendrik Lenstra, and Jean-Pierre Ramis for corrections in the first version of this article.

\end{document}